\newtheorem{theorem}{Theorem}[section]
\newtheorem{proposition}[theorem]{Propisition}
\newtheorem{lemma}[theorem]{Lemma}
\theoremstyle{definition}
\newtheorem{definition}[theorem]{Definition}
\theoremstyle{remark}
\newtheorem{remark}[theorem]{Remark}
\numberwithin{equation}{section}
\begin{document}

\title[Well-posedness for fractional  Navier-Stokes equations]
{Well-posedness for fractional Navier-Stokes equations in critical
spaces close to
$\dot{B}^{-(2\beta-1)}_{\infty,\infty}(\mathbb{R}^{n})$}

%    author two information
\author{Zhichun Zhai}
\address{Department of Mathematics and Statistics, Memorial University of Newfoundland, St. John's, NL A1C 5S7, Canada}
\curraddr{}
 \email{a64zz@mun.ca}
\thanks{Project supported in part  by Natural Science and
Engineering Research Council of Canada.}

%    General info
\subjclass[2000]{Primary 35Q30; 76D03; 42B35; 46E30}

\date{}

\keywords{Navier-Stokes equations;  $BMO^{-\zeta}(\mathbb{R}^{n});$
$Q_{\alpha}^{\beta}(\mathbb{R}^{n});$ Besov spaces}

\begin{abstract}
 In this paper,
we prove  the well-posedness for the fractional Navier-Stokes
equations in  critical spaces $G^{-(2\beta-1)}_{n}(\mathbb{R}^{n})$
and $BMO^{-(2\beta-1)}(\mathbb{R}^{n}).$ Both of them are close to
 the largest critical space $\dot{B}^{-(2\beta-1)}_{\infty,\infty}(\mathbb{R}^{n}).$ In
$G^{-(2\beta-1)}_{n}(\mathbb{R}^{n}),$  we establish the
well-posedness based on  a priori estimates for  the fractional
Navier-Stokes equations in Besov spaces. To obtain  the
well-posedness in $BMO^{-(2\beta-1)}(\mathbb{R}^{n}),$   we find a
relationship between $Q_{\alpha;\infty}^{\beta,-1}(\mathbb{R}^{n})$
and $BMO(\mathbb{R}^{n})$ by giving an equivalent characterization
of $BMO^{-\zeta}(\mathbb{R}^{n}).$
\end{abstract} \maketitle

%\tableofcontents \pagenumbering{arabic}
%\addcontentsline{toc}{chapter}{}
%\section{List of symbols}
%\addcontentsline{toc}{chapter}{List of symbols}
%\section{Introduction} \addcontentsline{toc}{chapter}{Introduction}

 \vspace{0.1in}

 \section{Introduction}
In this paper, we study  the well-posedness  of mild solutions to
the fractional
 Navier-Stokes equations on the half-space
$\mathbb{R}^{1+n}_{+}=(0,\infty)\times\mathbb{R}^{n},$ $n\geq 2:$
\begin{equation}\label{eq1e}
\left\{\begin{array} {l@{\quad \quad}l}
\partial_{t}u+(-\triangle)^{\beta}u+(u\cdot \nabla)u-\nabla p=0,
& \hbox{in}\ \mathbb{R}^{1+n}_{+};\\
\nabla \cdot u=0, & \hbox{in}\ \mathbb{R}^{1+n}_{+};\\
u|_{t=0}=a, &\hbox{in}\ \mathbb{R}^{n} \end{array} \right.
\end{equation}
with $\beta\in(1/2,1).$ The mild  solution to equations (\ref{eq1e})
is the fixed point of operator
 $$(Tu)(t,x)=e^{-t(-\triangle)^{\beta}}a(x)-\int_{0}^{t}e^{-(t-s)(-\triangle)^{\beta}}P\nabla(u\otimes u)(s,x)ds.$$
 Here
$$e^{-t(-\triangle)^{\beta}}f(x)
:=K_{t}^\beta(x)\ast f(x)\ \text{ with }
 \widehat{K_{t}^{\beta}}(\xi)=e^{-t|\xi|^{2\beta}}$$
 and $P$ is the Helmboltz-Weyl projection:
 $$P=\{P_{j,k}\}_{j,k=1,\cdots,n}=\{\delta_{j,k}+R_{j}R_{k}\}_{j,k=1,\cdots,n}$$
 with the Kronecker symbol $\delta_{j,k}$   and the Riesz transform
 $R_{j}=\partial_{j}(-\triangle)^{-1/2}.$

Note that the following scaling
 \begin{equation}\label{eq2e}
 u_{\lambda}(t,x)=\lambda^{2\beta-1}u(\lambda^{2\beta}t,\lambda x),
\ \
 p_{\lambda}(t,x)= \lambda^{4\beta-2}p(\lambda^{2\beta}t,\lambda x),\ \
 a_{\lambda}(x)= \lambda^{2\beta-1} a(\lambda x)
\end{equation}
 is important for  equations (\ref{eq1e}). This leads us to study equations (\ref{eq1e})
 in critical function spaces   which  are invariant
under the scaling $f(x)\longrightarrow \lambda^{2\beta-1}f(\lambda
x).$

When $\beta=1,$ equations (\ref{eq1e}) become the classical
Navier-Stokes equations.   The existence of mild solutions  has been
established locally in time and global for small initial data in
various critical  spaces. Especially,   Koch and Tataru in \cite{H.
Koch D. Tataru} proved the well-posedness of classical Navier-Stokes
equations in
   the space
$BMO^{-1}(\mathbb{R}^{n})=\nabla\cdot(BMO(\mathbb{R}^{n}))^{n}.$
Xiao  in  \cite{J. Xiao 1} generalized  the results of Koch and
Tataru \cite{H. Koch D. Tataru} to
$Q_{\alpha;\infty}^{-1}(\mathbb{R}^{n})$ for $\alpha\in (0,1).$ Chen
and Xin in \cite{Chen Xin} studied the classical Navier-Stokes
equations in several critical spaces.
  See,  Kato \cite{T Kato}, Cannone
\cite{M Cannone}, Giga and Miyakawa \cite{Y Giga T Miyakawa},
  Bourgain and
Pavlovi\'{c}  \cite{Bourgain Pavlovic}
 and the references therein for more history and recent development.

For general case,  Lions
 \cite{J L Lions}  proved the
global  existence of  the classical solutions to equations
(\ref{eq1e}) when $\beta\geq \frac{5}{4}$ in dimensional $3.$ Wu in
\cite{J Wu 1} obtained similar result   for  $\beta\geq
\frac{1}{2}+\frac{n}{4}$ in dimension $n.$  For the important case
$\beta< \frac{1}{2}+\frac{n}{4},$ Wu in \cite{J Wu 2}-\cite{J Wu 3}
considered the existence of solution  to equations (\ref{eq1e}) in
$\dot{B}^{1+\frac{n}{p}-2\beta}_{p,q}(\mathbb{R}^{n}).$ In Li and
Zhai \cite{Pengtao Li Zhicun Zhai}-\cite{Li Zhai}, inspired by Koch
and Tataru \cite{H. Koch D. Tataru} and Xiao \cite{J. Xiao 1},
 they studied  equations (\ref{eq1e})  in critical space
   $Q_{\alpha;\infty}^{\beta,-1}(\mathbb{R}^{n})=\nabla\cdot(Q_{\alpha}^{\beta}(\mathbb{R}^{n}))^{n}$
for $\beta\in (1/2,1)$ and $\alpha\in (0,\beta).$
    Here  $Q_{\alpha}^{\beta}(\mathbb{R}^{n})$  for $\alpha\in (-\infty,\beta)$ is
   the set of all measurable functions with
 $$\sup_{I}(l(I))^{2(\alpha+\beta-1)-n}\int_{I}\int_{I}\frac{|f(x)-f(y)|^{2}}{|x-y|^{n+2(\alpha-\beta+1)}}dxdy<\infty$$
 where  the supremum is taken over all cubes $I$ with the edge length $l(I)$
 and the edges parallel to the coordinate axes in $\mathbb{R}^{n}.$ $Q_{\alpha}^{\beta}(\mathbb{R}^{n})$ is a
 generalization of $Q_{\alpha}(\mathbb{R}^{n})$ studied by Essen,
Janson, Peng and Xiao  \cite{M. Essen S. Janson L. Peng J. Xiao},
Xiao \cite{J. Xiao}, Dafni and Xiao \cite{G. Dafni J. Xiao}-\cite{G.
Dafni J. Xiao 1}.
 Meanwhile, Li and
Zhai \cite{Pengtao Li Zhicun Zhai}  proved that   Besov space
$\dot{B}^{1-2\beta}_{\infty,\infty}(\mathbb{R}^{n})$ for $\beta\in
(1/2,1)$ is the biggest one among the critical spaces of equations
(\ref{eq1e}).

In this paper, we accomplish two major  goals. First, we prove the
well-posedness for equations (\ref{eq1e}) in  spaces
 $G_{n}^{-(2\beta-1)}(\mathbb{R}^{n})$ for $\beta\in (1/2,1).$  Here, for
$s>0,$
$$G_{p}^{-s}(\mathbb{R}^{n})=\left\{f\in \mathcal {S}'(\mathbb{R}^{n}): |f|\in \mathcal {S}'(\mathbb{R}^{n}),
\|f\|_{G_{p}^{-s}(\mathbb{R}^{n})}=\sup_{t>0}t^{\frac{sn}{2p\beta}}\|e^{-t(-\triangle)^{\beta}}|f|\|_{L^{\infty}(\mathbb{R}^{n})}<\infty
\right\}.$$ Second, to obtain the well-posedness  in
$BMO^{-(2\beta-1)}(\mathbb{R}^{n})$ for $\beta\in (1/2,1),$  we find
a relation between $Q_{\alpha,\infty}^{\beta,-1}(\mathbb{R}^{n})$
and $BMO(\mathbb{R}^{n}):$
\begin{equation}\label{relation}
Q_{\alpha,\infty}^{\beta,-1}(\mathbb{R}^{n})=(-\triangle)^{\frac{2\beta-1}{2}}BMO(\mathbb{R}^{n})=BMO^{-(2\beta-1)}(\mathbb{R}^{n})
\end{equation}
for $\alpha=1-\beta$ and $\beta\in (1/2,1),$
 by  giving an
equivalent characterization of
 $BMO^{-\zeta}(\mathbb{R}^{n}).$
 Our
well-posedness results   extend   that of Chen and Xin \cite{Chen
Xin},   Koch and Tataru  \cite{H. Koch D. Tataru}.  The relation
(\ref{relation}) between
$Q_{\alpha,\infty}^{\beta,-1}(\mathbb{R}^{n})$ for $\beta\in
(1/2,1)$ and $BMO(\mathbb{R}^{n})$ gives us a clear link between
$Q_{\alpha}^{\beta}(\mathbb{R}^{n})$ and $BMO(\mathbb{R}^{n}).$ When
$\alpha\neq1-\beta,$ an interesting problem is whether or not  there
is a similar link between $Q_{\alpha}^{\beta}(\mathbb{R}^{n})$ and
$BMO(\mathbb{R}^{n}).$

 The space
$BMO^{-\zeta}(\mathbb{R}^{n})$ was introduced by Zhou and Gala in
\cite{Zhou Gala 1} by using heat semigroup $e^{t\triangle}.$ In the
following, we  define $BMO^{-\zeta}(\mathbb{R}^{n})$ by
$e^{-t(-\triangle)^{\beta}}$ for $\beta\in(1/2,1).$ This is
motivated
 by  the following well-known facts.

 For  a  $C^{\infty}$
real-valued function on $\mathbb{R}^{n}$  satisfying  the
properties:
\begin{equation}\label{prp psi}
\phi_{j}\in L^{1}(\mathbb{R}^{n}),\ |\phi_{j}(x)|\lesssim
(1+|x|)^{-(n+1)},\
 \int_{\mathbb{R}^{n}}\phi_{j}(x)dx=0\ \ \hbox{and}\
\ (\phi_{j})_{t}(x)=t^{-n}\phi_{j}\left(\frac{x}{t}\right),
\end{equation}
 \begin{equation}\label{eq cha q a b}
 f\in
BMO(\mathbb{R}^{n})\Longleftrightarrow \sup_{x\in \mathbb{R}^{n},
r\in(0,\infty)}r^{-n}\int_{0}^{r}\int_{|y-x|<r}|f\ast
\phi_{t}(y)|^{2}t^{-1}dtdy<\infty.
\end{equation}
Here $A\lesssim B$ means $A\leq CB$ with $C>0.$
 Thus $BMO(\mathbb{R}^{n})$  can be defined equivalently as
\begin{equation}\label{semi}
\|f\|^{2}_{BMO(\mathbb{R}^{n})}=\sup_{x\in\mathbb{R}^{n}, r\in
(0,\infty)}r^{-n} \int_{0}^{r^{2\beta}}\int_{|y-x|<r}|\nabla
e^{-t(-\triangle)^{\beta}}
f(y)|^{2}t^{\frac{1-\beta}{\beta}}dtdy<\infty.
\end{equation}

Then, (\ref{semi}) leads us to  introduce
$BMO^{-\zeta}(\mathbb{R}^{n})$ as follows.
\begin{definition}
For $\beta\in (1/2,1),$ $0\leq \zeta\leq n/2,$ define
$BMO^{-\zeta}(\mathbb{R}^{n})$ as the set of all measurable
functions $f$ with
$$\|f\|^{2}_{BMO^{-\zeta}(\mathbb{R}^{n})}=\sup_{x\in\mathbb{R}^{n},
r\in (0,\infty)}r^{-n}
\int_{0}^{r^{2\beta}}\int_{|y-x|<r}t^{\frac{\zeta-\beta}{\beta}}|
e^{-t(-\triangle)^{\beta}} f(y)|^{2}dtdy<\infty.$$
\end{definition}
\begin{remark}
 Obviously,  $BMO^{-\zeta}(\mathbb{R}^{n})$ is invariant under the
scaling $f(x)\longrightarrow \lambda^{\zeta}f(\lambda x).$ Note that
$Q_{\alpha;\infty}^{\beta,-1}(\mathbb{R}^{n})$ is invariant under
the scaling $f(x)\longrightarrow \lambda^{2\beta-1}f(\lambda x).$
Thus $BMO^{-\zeta}(\mathbb{R}^{n})$ will be more useful than
$Q_{\alpha;\infty}^{\beta,-1}(\mathbb{R}^{n}).$
\end{remark}
%\begin{proposition}For $0<\zeta<2-\frac{1}{\beta},$
%$BMO^{-\zeta}(\mathbb{R}^{n})\subset\dot{B}^{-(\beta\zeta+1-\beta)}_{\infty,\infty}.$
%\end{proposition}

We  state our main results as follows. The first one is  a priori
 estimates in homogeneous Besov spaces for the fractional Navier-Stokes equations.

\begin{proposition}\label{lemma for priori esti}
Let $2-2\beta<w<2\beta,$ $1+n/p+w<4\beta,$ $2\leq n\leq p\leq
\infty,$ $1\leq q\leq \infty$ and
$$a\in (\mathcal {S}'(\mathbb{R}^{n}))^{n},
f(t)\in
(\dot{B}^{w-2\beta+\frac{n}{p}}_{p,\infty}(\mathbb{R}^{n}))^{n\times
n}.$$ Then the solution to the  integral equation
$$u(t)=e^{-t(-\triangle)^{\beta}}a+\int_{0}^{t}e^{-(t-s)(-\triangle)^{\beta}}P\nabla \cdot f(s)ds$$
satisfies the estimates
$$\|u(t)\|_{\dot{B}^{-1+\frac{n}{p}}_{p,q}(\mathbb{R}^{n})}
\lesssim
\|a\|_{\dot{B}^{-(2\beta-1)+\frac{n}{p}}_{p,q}(\mathbb{R}^{n})}
+\sup_{0<s<t}s^{\frac{w}{2\beta}+1-\frac{1}{\beta}}\|f(s)\|_{\dot{B}^{w-2\beta+\frac{n}{p}}_{p,\infty}(\mathbb{R}^{n})}$$
and
$$t^{\frac{w}{2\beta}}\|u(t)\|_{\dot{B}^{w-(2\beta-1)+\frac{n}{p}}_{p,\infty}(\mathbb{R}^{n})}\lesssim
\|a\|_{\dot{B}^{-(2\beta-1)+\frac{n}{p}}_{p,\infty}(\mathbb{R}^{n})}
+\sup_{0<s<t}s^{\frac{w}{2\beta}}\|f(s)\|_{\dot{B}^{w-2\beta+\frac{n}{p}}_{p,\infty}(\mathbb{R}^{n})}$$
provided the right-hand sides of the above inequalities are finite,
respectively.
\end{proposition}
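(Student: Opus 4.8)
The plan is to localise everything in frequency with the homogeneous Littlewood--Paley projections $\dot{\Delta}_j$ (frequencies $|\xi|\sim 2^j$) and to reduce both inequalities to uniform bounds on each dyadic block, since all the norms appearing are built from $\|\dot{\Delta}_j\,\cdot\,\|_{L^p}$. Two block-level facts will do most of the work. First, the fractional heat semigroup acts on a single block as a rapidly decaying multiplier: writing $\dot{\Delta}_j e^{-t(-\triangle)^\beta}$ as convolution against the rescaled kernel and using Bernstein together with the Fourier decay of $K^\beta_t$, one gets $\|\dot{\Delta}_j e^{-t(-\triangle)^\beta}g\|_{L^p}\lesssim e^{-ct2^{2\beta j}}\|\dot{\Delta}_j g\|_{L^p}$ for some $c>0$, uniformly in $j$ and $t$. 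Second, $P\nabla\cdot$ is a Fourier multiplier homogeneous of degree one whose symbol is smooth away from the origin, so the Mikhlin--H\"ormander theorem gives $\|\dot{\Delta}_j P\nabla\cdot h\|_{L^p}\lesssim 2^{j}\|\dot{\Delta}_j h\|_{L^p}$ uniformly in $j$. With these in hand each term of $u=e^{-t(-\triangle)^\beta}a+\int_0^t e^{-(t-s)(-\triangle)^\beta}P\nabla\cdot f(s)\,ds$ becomes a scalar computation in the two variables $j$ and $t$.

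For the contribution of the initial datum I would insert the first block fact and balance homogeneity weights. In the second inequality the target regularity exceeds that of $a$ by exactly $w>0$, and the prefactor $t^{w/(2\beta)}$ is designed to absorb the smoothing cost: after the substitution $x=t2^{2\beta j}$ one has $t^{w/(2\beta)}2^{jw}e^{-ct2^{2\beta j}}=x^{w/(2\beta)}e^{-cx}\le \sup_{x>0}x^{w/(2\beta)}e^{-cx}<\infty$, uniformly in $j$ and $t$, which yields the desired bound on taking the $\ell^q_j$ (or $\ell^\infty_j$) norm. The datum term in the first inequality is treated identically, the homogeneity bookkeeping now being $(-1+n/p)-(-(2\beta-1)+n/p)=2\beta-2$.

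For the Duhamel term I would combine the two block facts to obtain, on the $j$-th block, $\|\dot{\Delta}_j\int_0^t e^{-(t-s)(-\triangle)^\beta}P\nabla\cdot f(s)\,ds\|_{L^p}\lesssim 2^{j}\int_0^t e^{-c(t-s)2^{2\beta j}}\|\dot{\Delta}_j f(s)\|_{L^p}\,ds$. Bounding $\|\dot{\Delta}_j f(s)\|_{L^p}$ by $2^{-j(w-2\beta+n/p)}\|f(s)\|_{\dot{B}^{w-2\beta+n/p}_{p,\infty}}$ and inserting the assumed time weight on $f$ reduces the whole term to a pure power $2^{j\sigma}$ multiplied by the scalar integral $\int_0^t e^{-c(t-s)2^{2\beta j}}s^{-\gamma}\,ds$, with $\gamma=\tfrac{w}{2\beta}+1-\tfrac1\beta$ for the first inequality and $\gamma=\tfrac{w}{2\beta}$ for the second. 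I would evaluate this integral by splitting $(0,t)$ into $(0,t/2)$ and $(t/2,t)$ (equivalently by rescaling $s=t\sigma$), producing the compensating negative power of $2^{2\beta j}$ together with the correct power of $t$; the hypotheses are exactly what keep the exponents in range, with $2-2\beta<w<2\beta$ forcing $0<\gamma<1$ so that $s^{-\gamma}$ is integrable at the origin, and $1+n/p+w<4\beta$ placing the residual power of $2^j$ on the side that makes the supremum (or $\ell^q$ sum) over $j$ converge.

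The step I expect to be the genuine obstacle is this last balancing of the Duhamel term: one must show that after the time integration the remaining factor $2^{j(\cdot)}$ times the emerging power of $t$ is controlled uniformly in both $j$ and $t$, and that the $\ell^q_j$ summation survives when $q<\infty$. For the latter I would not estimate block-by-block and then sum, but rather keep the $\ell^q_j$ norm and move it inside the time integral by Minkowski's inequality, reducing matters to the uniform finiteness of $\big\|2^{j\sigma}e^{-c(t-s)2^{2\beta j}}\big\|_{\ell^q_j}$ as a function of $t-s$; it is in controlling this discrete norm that the two-sided restriction $2-2\beta<w<2\beta$ and the bound $1+n/p+w<4\beta$ are used together. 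By contrast the datum terms are routine once the first block fact is available, so the entire difficulty is concentrated in the exponent accounting for the nonlinear integral.
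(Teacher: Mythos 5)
Your two block facts are correct, and for the \emph{first} estimate your scheme does close. Writing $\sigma_1=-(2\beta-1)+\frac{n}{p}$ and $\sigma_f=w-2\beta+\frac{n}{p}$, the net block power is $2^{j(\sigma_1+1-\sigma_f)}=2^{j(2-w)}$, Minkowski plus the geometric localization of $j\mapsto 2^{j(2-w)}e^{-c(t-s)2^{2\beta j}}$ give $\bigl\|2^{j(2-w)}e^{-c(t-s)2^{2\beta j}}\bigr\|_{\ell^q_j}\lesssim (t-s)^{-\frac{2-w}{2\beta}}$ for every $q$, and with $\gamma=\frac{w}{2\beta}+1-\frac{1}{\beta}$ one has $\frac{2-w}{2\beta}<1$, $0<\gamma<1$ and $\frac{2-w}{2\beta}+\gamma=1$, so $\int_0^t(t-s)^{-\frac{2-w}{2\beta}}s^{-\gamma}ds$ is a constant independent of $t$. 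This is in substance the paper's own argument (which invokes its semigroup-smoothing Lemma~2.3 instead of blocks), so here you have merely re-derived the same estimate at block level. Two caveats: the left norm $\dot{B}^{-1+n/p}_{p,q}$ in the statement is a typo for $\dot{B}^{-(2\beta-1)+n/p}_{p,q}$ (that is what the paper's proof estimates), and your claim that the datum term is ``treated identically'' with bookkeeping $2\beta-2$ cannot be right as written: $2\beta-2<0$ and there is no time weight, and in the homogeneous scale $e^{-t(-\triangle)^{\beta}}$ cannot lower regularity uniformly in $t$, since low frequencies see the semigroup as the identity. With the corrected index the datum term is trivial ($e^{-ct2^{2\beta j}}\le 1$).

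The genuine gap is exactly the step you flagged for the \emph{second} estimate, and it cannot be filled, because that estimate is false for $\beta<1$. There the net block power is $2^{j(\sigma_2+1-\sigma_f)}=2^{2j}$ with $\sigma_2=w-(2\beta-1)+\frac{n}{p}$, while the time integration supplies at most $2^{-2\beta j}$; equivalently your Minkowski route produces $\bigl\|2^{2j}e^{-c(t-s)2^{2\beta j}}\bigr\|_{\ell^\infty_j}\approx (t-s)^{-1/\beta}$, which is non-integrable at $s=t$ since $1/\beta>1$. The deficit $2^{(2-2\beta)j}$ is not an artifact of the triangle inequality: taking $a=0$ and $f(s)=s^{-w/(2\beta)}g$ with $\widehat{g}$ supported in $|\xi|\sim 2^{j_0}$, a direct Fourier computation gives $t^{\frac{w}{2\beta}}\|u(t)\|_{\dot{B}^{\sigma_2}_{p,\infty}}\gtrsim 2^{(2-2\beta)j_0}\sup_{0<s<t}s^{\frac{w}{2\beta}}\|f(s)\|_{\dot{B}^{\sigma_f}_{p,\infty}}$ as soon as $t2^{2\beta j_0}\gtrsim 1$; one can also see this by scaling, since under $f\mapsto f(\lambda^{2\beta}\cdot,\lambda\cdot)$ the two sides of the second inequality scale with a mismatch $\lambda^{2-2\beta}$. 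So you should not expect any rearrangement of the $\ell^q$ summation to rescue it. For comparison, the paper's proof of this half contains a compensating exponent error at precisely the analogous point: in its chain it bounds $(t+\tau-s)^{-1}\|\triangle e^{-\frac{t+\tau-s}{2}(-\triangle)^{\beta}}\triangle^{-1}P\nabla\cdot f(s)\|_{L^p}$ by $(t+\tau-s)^{\frac{\sigma_2-4\beta}{2\beta}}\|\triangle^{-1}P\nabla\cdot f(s)\|_{\dot{B}^{\sigma_2}_{p,\infty}}$, which charges the full Laplacian (order $2$) a price $(t+\tau-s)^{-1}$ as if it had order $2\beta$; the correct price $(t+\tau-s)^{-\frac{2-\sigma_2}{2\beta}-1}$ leaves an uncancelled factor $\tau^{1-\frac{1}{\beta}}$ whose supremum over $\tau>0$ diverges. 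In short: your proposal is sound where the paper's proof is sound (the first estimate, with the index typo fixed), and it honestly exposes, rather than fails to overcome, the defect in the second estimate.
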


Applying Proposition \ref{lemma for priori esti}, we obtain the
existence of solution to equations (\ref{eq1e}).

\begin{theorem}\label{biggest critical spaces}
Let $n\geq 2,$ $\beta\in(1/2,1),$ $\max\{2\beta-1,
2-2\beta\}<w<2\beta,$ $1+n/p+w<4\beta,$ $a\in
G_{n}^{-(2\beta-1)}(\mathbb{R}^{n}),$ $\nabla \cdot a=0.$ If
$\|a\|_{G_{n}^{-(2\beta-1)}(\mathbb{R}^{n})}$ is small enough, then
there is a unique   solution to (\ref{eq1e}) satisfying
$$\|u(t)\|_{G_{n}^{-(2\beta-1)}(\mathbb{R}^{n})}
+t^{\frac{1}{2\beta}}\|u(t)\|_{L^{\infty}(\mathbb{R}^{n})}
+t^{\frac{w}{2\beta}}\|u(t)\|_{\dot{B}^{w-(2\beta-1)}_{\infty,\infty}(\mathbb{R}^{n})}\lesssim
\|a\|_{G_{n}^{-(2\beta-1)}(\mathbb{R}^{n})}.$$
\end{theorem}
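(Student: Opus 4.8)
The plan is to solve the mild equation $u = e^{-t(-\triangle)^{\beta}}a + B(u,u)$ by Picard iteration, via the standard abstract bilinear fixed-point principle used in \cite{H. Koch D. Tataru} and \cite{Chen Xin}: if $X$ is a Banach space on which the bilinear map
$$B(u,v)(t)=-\int_{0}^{t}e^{-(t-s)(-\triangle)^{\beta}}P\nabla\cdot(u\otimes v)(s)\,ds$$
is bounded, $\|B(u,v)\|_{X}\lesssim\|u\|_{X}\|v\|_{X}$, and the free evolution satisfies $\|e^{-t(-\triangle)^{\beta}}a\|_{X}\lesssim\|a\|_{G_{n}^{-(2\beta-1)}}$, then for $\|a\|_{G_{n}^{-(2\beta-1)}}$ small enough there is a unique small solution $u\in X$ with $\|u\|_{X}\lesssim\|a\|_{G_{n}^{-(2\beta-1)}}$. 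I would take $X$ to be the space of $u(t,x)$ for which the three scale-invariant quantities displayed in the conclusion of the theorem are finite, with $\|u\|_{X}$ their sum. The constraints $\max\{2\beta-1,2-2\beta\}<w<2\beta$ and $1+n/p+w<4\beta$ are precisely what is needed to feed the two decay norms of $X$ into Proposition~\ref{lemma for priori esti} with $p=\infty$.

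The linear estimate rests on the positivity of the fractional heat kernel: for $\beta\in(1/2,1)$ the operator $e^{-t(-\triangle)^{\beta}}$ is convolution with a nonnegative kernel $K_{t}^{\beta}$, so $|e^{-t(-\triangle)^{\beta}}a|\le e^{-t(-\triangle)^{\beta}}|a|$ pointwise, and $e^{-\tau(-\triangle)^{\beta}}$ preserves this inequality. Combined with the semigroup law $e^{-\tau(-\triangle)^{\beta}}e^{-t(-\triangle)^{\beta}}=e^{-(\tau+t)(-\triangle)^{\beta}}$ and $\tau^{(2\beta-1)/(2\beta)}\le(\tau+t)^{(2\beta-1)/(2\beta)}$, this yields $\sup_{t>0}\|e^{-t(-\triangle)^{\beta}}a\|_{G_{n}^{-(2\beta-1)}}\le\|a\|_{G_{n}^{-(2\beta-1)}}$ straight from the definition of the $G$-norm. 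The same positivity gives the embedding $G_{n}^{-(2\beta-1)}\hookrightarrow\dot B^{-(2\beta-1)}_{\infty,\infty}$, since $\|e^{-t(-\triangle)^{\beta}}a\|_{L^{\infty}}\le\|e^{-t(-\triangle)^{\beta}}|a|\|_{L^{\infty}}$; the two decay norms of the free evolution then follow from the fractional-heat characterization of homogeneous Besov spaces (with the scale-invariant time weights), and this embedding also supplies the data term $\|a\|_{\dot B^{-(2\beta-1)}_{\infty,\infty}}\lesssim\|a\|_{G_{n}^{-(2\beta-1)}}$ required by Proposition~\ref{lemma for priori esti}.

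For the bilinear bound on the two decay norms I would apply Proposition~\ref{lemma for priori esti} with $a=0$ and $f=u\otimes v$ at $p=\infty$, reducing the problem to controlling the forcing quantities $\sup_{s}s^{w/(2\beta)}\|(u\otimes v)(s)\|_{\dot B^{w-2\beta}_{\infty,\infty}}$ (and its analogue with the weight $s^{w/(2\beta)+1-1/\beta}$) by $\|u\|_{X}\|v\|_{X}$. This is a product estimate in a negative-regularity Besov space: using the paraproduct decomposition together with the embedding $\dot B^{w-(2\beta-1)}_{\infty,\infty}\hookrightarrow\dot B^{w-2\beta}_{\infty,\infty}$, one dominates $\|(u\otimes v)(s)\|_{\dot B^{w-2\beta}_{\infty,\infty}}$ by $\|u(s)\|_{L^{\infty}}\|v(s)\|_{\dot B^{w-(2\beta-1)}_{\infty,\infty}}+\|v(s)\|_{L^{\infty}}\|u(s)\|_{\dot B^{w-(2\beta-1)}_{\infty,\infty}}$, after which each factor is replaced by the corresponding decay norm of $X$; the restrictions on $w$ ensure that the resulting power of $s$ is handled by the Duhamel convolution, which is exactly the content of Proposition~\ref{lemma for priori esti}.

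The main obstacle will be the bilinear estimate in the $G$-norm, because the $|\cdot|$ inside that norm destroys the oscillation in the Duhamel term that one would normally exploit. My plan is to dispense with cancellation: first bound the bilinear term pointwise, using kernel estimates for $e^{-(t-s)(-\triangle)^{\beta}}P\nabla$ (which loses one derivative, behaving like $(t-s)^{-1/(2\beta)}$ times a fractional-heat-type kernel) together with the already-controlled $L^{\infty}$ and $\dot B^{w-(2\beta-1)}_{\infty,\infty}$ norms of $u,v$, so as to produce a nonnegative majorant for $|B(u,v)(t,x)|$; then apply the positivity-preserving semigroup $e^{-\tau(-\triangle)^{\beta}}$ to this majorant and take the supremum, exactly as in the linear step. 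Verifying that the majorant is admissible—that the time integral converges and reproduces the weight $\tau^{(2\beta-1)/(2\beta)}$ defining $\|\cdot\|_{G_{n}^{-(2\beta-1)}}$—is where the hypotheses $1+n/p+w<4\beta$ and $w>2\beta-1$ enter. Once the three pieces of $\|B(u,v)\|_{X}\lesssim\|u\|_{X}\|v\|_{X}$ and the linear bound are established, the abstract lemma delivers existence, uniqueness in a small ball, and the stated estimate at once.
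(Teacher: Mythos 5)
Your overall architecture (fixed point in a scale-invariant space built from the $G$-norm and the weighted decay norms, the linear estimate via positivity of the kernel $K_{t}^{\beta}$, and the $G$-norm bilinear bound via an $L^{1}$ kernel majorant for $e^{-(t-s)(-\triangle)^{\beta}}P\nabla$) matches the paper's proof, which runs through Proposition \ref{lemma for priori esti} and Lemma \ref{product} in exactly this way. However, there is a genuine gap in your treatment of the Besov part of the bilinear estimate. You propose to bound the forcing term $\sup_{s}s^{\frac{w}{2\beta}}\|(u\otimes v)(s)\|_{\dot{B}^{w-2\beta}_{\infty,\infty}(\mathbb{R}^{n})}$ by a paraproduct argument ``together with the embedding $\dot{B}^{w-(2\beta-1)}_{\infty,\infty}\hookrightarrow\dot{B}^{w-2\beta}_{\infty,\infty}$.'' That embedding is false: for \emph{homogeneous} Besov spaces there is no monotonicity in the regularity index alone (the paper's Lemma \ref{le1}(ii) trades regularity for integrability; a pure drop in $s$ at fixed $p$ is only valid for inhomogeneous spaces). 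Consequently the product estimate you build on it,
$$\|u v\|_{\dot{B}^{w-2\beta}_{\infty,\infty}(\mathbb{R}^{n})}\lesssim
\|u\|_{L^{\infty}(\mathbb{R}^{n})}\|v\|_{\dot{B}^{w-(2\beta-1)}_{\infty,\infty}(\mathbb{R}^{n})}
+\|v\|_{L^{\infty}(\mathbb{R}^{n})}\|u\|_{\dot{B}^{w-(2\beta-1)}_{\infty,\infty}(\mathbb{R}^{n})},$$
is also false. Take $u_{N}=v_{N}=\cos(2^{-N}x_{1})$: then $\|u_{N}\|_{L^{\infty}}=1$ and $\|u_{N}\|_{\dot{B}^{w-(2\beta-1)}_{\infty,\infty}}\approx 2^{-N(w-2\beta+1)}\rightarrow 0$ as $N\rightarrow\infty$ (since $w>2\beta-1$), while $u_{N}^{2}=\tfrac12+\tfrac12\cos(2^{-N+1}x_{1})$ has $\|u_{N}^{2}\|_{\dot{B}^{w-2\beta}_{\infty,\infty}}\approx 2^{(-N+1)(w-2\beta)}\rightarrow\infty$ (since $w<2\beta$). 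Low frequencies of the product are precisely what a negative-regularity homogeneous norm cannot absorb from $L^{\infty}\cap\dot{B}^{w-(2\beta-1)}_{\infty,\infty}$ information.

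This is not a technicality but the crux of the theorem: the reason one works in $G_{n}^{-(2\beta-1)}(\mathbb{R}^{n})$ rather than in $\dot{B}^{-(2\beta-1)}_{\infty,\infty}(\mathbb{R}^{n})$ is that the $G$-norm, being defined through $e^{-t(-\triangle)^{\beta}}|f|$, controls products trivially: $\|u\otimes v\|_{G_{n}^{-(2\beta-1)}}\lesssim\|u\|_{L^{\infty}}\|v\|_{G_{n}^{-(2\beta-1)}}$ by kernel positivity. The paper's proof closes the estimate exactly here: instead of your false embedding, it uses the interpolation inequality behind Lemma \ref{lemma in Besov space} to dominate
$s^{\frac{w}{2\beta}}\|u\otimes u\|_{\dot{B}^{w-2\beta}_{\infty,\infty}}$ by
$s^{\frac{2\beta-1}{2\beta}}\|u\otimes u\|_{\dot{B}^{-(2\beta-1)}_{\infty,\infty}}
+s^{\frac{w+2\beta-1}{2\beta}}\|u\otimes u\|_{\dot{B}^{w-(2\beta-1)}_{\infty,\infty}}$,
then bounds the negative-regularity piece through the chain
$\|u\otimes u\|_{\dot{B}^{-(2\beta-1)}_{\infty,\infty}}\lesssim\|u\otimes u\|_{G_{n}^{-(2\beta-1)}}\lesssim\|u\|_{L^{\infty}}\|u\|_{G_{n}^{-(2\beta-1)}}$ (using the embedding $G_{n}^{-(2\beta-1)}\hookrightarrow\dot{B}^{-(2\beta-1)}_{\infty,\infty}$, which goes in the harmless direction), and the positive-regularity piece by the difference characterization of Lemma \ref{equiv norm in besov}, valid since $0<w-(2\beta-1)<1$, which gives $\|u\otimes u\|_{\dot{B}^{w-(2\beta-1)}_{\infty,\infty}}\lesssim\|u\|_{L^{\infty}}\|u\|_{\dot{B}^{w-(2\beta-1)}_{\infty,\infty}}$. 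In short: your proposal must route the negative-regularity product bound through the $G$-norm; as written, that step fails, and with it the contraction in $X$.
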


Similar to  Theorem \ref{biggest critical spaces}, we can prove the
existence of solutions to the fractional magnetohydrodynamics
equations
\begin{equation}\label{eq1ee}
\left\{\begin{array} {l@{\quad \quad}l}
\partial_{t}u+(-\triangle)^{\beta}u+u\cdot \nabla u+\nabla p-b\cdot\nabla b=0,
& \hbox{in}\ \mathbb{R}^{1+n}_{+};\\
\partial_{t}b+(-\triangle)^{\beta}b+b\cdot \nabla b-b\cdot\nabla u=0,
& \hbox{in}\ \mathbb{R}^{1+n}_{+};\\
\nabla \cdot u=\nabla\cdot b=0, & \hbox{in}\ \mathbb{R}^{1+n}_{+};\\
u|_{t=0}=u_{0},\  b|_{t=0}=b_{0}, &\hbox{in}\ \mathbb{R}^{n}.
\end{array} \right.
\end{equation}

We refer  the readers to Wu \cite{J Wu 1} and \cite{J Wu 4} and the
references therein for more information about this system.

\begin{theorem}\label{biggest critical spaces 2}
Let $n\geq 2,$  $\beta\in(1/2,1),$ $\max\{2\beta-1,
2-2\beta\}<w<2\beta,$ $1+n/p+w<4\beta,$ $(u_{0},b_{0})\in
G_{n}^{-(2\beta-1)}(\mathbb{R}^{n}),$ $\nabla \cdot u_{0}=0$ and
$\nabla \cdot b_{0}=0.$ If
$\|u_{0}\|_{G_{n}^{-(2\beta-1)}(\mathbb{R}^{n})}
+\|b_{0}\|_{G_{n}^{-(2\beta-1)}(\mathbb{R}^{n})}$ is small enough,
then there is a unique  mild solution to (\ref{eq1ee}) satisfying
$$\|u(t)\|_{G_{n}^{-(2\beta-1)}(\mathbb{R}^{n})}
+t^{\frac{1}{2\beta}}\|u(t)\|_{L^{\infty}(\mathbb{R}^{n})}
+t^{\frac{w}{2\beta}}\|u(t)\|_{\dot{B}^{w-(2\beta-1)}_{\infty,\infty}(\mathbb{R}^{n})}
\lesssim \|a\|_{G_{n}^{-(2\beta-1)}(\mathbb{R}^{n})},$$
$$\|b(t)\|_{G_{n}^{-(2\beta-1)}(\mathbb{R}^{n})}
+t^{\frac{1}{2\beta}}\|b(t)\|_{L^{\infty}(\mathbb{R}^{n})}
+t^{\frac{w}{2\beta}}\|b(t)\|_{\dot{B}^{w-(2\beta-1)}_{\infty,\infty}(\mathbb{R}^{n})}\lesssim
\|a\|_{G_{n}^{-(2\beta-1)}(\mathbb{R}^{n})}.$$
\end{theorem}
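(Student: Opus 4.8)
The plan is to recast the system (\ref{eq1ee}) as a coupled bilinear fixed-point problem for the pair $(u,b)$ and to solve it by the same contraction scheme that underlies Theorem \ref{biggest critical spaces}, now carried out in a product space. First I would apply the Helmholtz--Weyl projection $P$ and use $\nabla\cdot u=\nabla\cdot b=0$ to put the quadratic terms in divergence form, $(u\cdot\nabla)u-(b\cdot\nabla)b=\nabla\cdot(u\otimes u-b\otimes b)$ and $(u\cdot\nabla)b-(b\cdot\nabla)u=\nabla\cdot(u\otimes b-b\otimes u)$, so that a mild solution is a fixed point of the map $(u,b)\mapsto(\mathcal{U},\mathcal{B})$ given by
$$\mathcal{U}=e^{-t(-\triangle)^{\beta}}u_{0}-\int_{0}^{t}e^{-(t-s)(-\triangle)^{\beta}}P\nabla\cdot(u\otimes u-b\otimes b)(s)\,ds,$$
$$\mathcal{B}=e^{-t(-\triangle)^{\beta}}b_{0}-\int_{0}^{t}e^{-(t-s)(-\triangle)^{\beta}}P\nabla\cdot(u\otimes b-b\otimes u)(s)\,ds.$$
I then introduce the Banach space $X$ consisting of those $u$ for which
$$\|u\|_{X}:=\sup_{t>0}\Big(\|u(t)\|_{G_{n}^{-(2\beta-1)}(\mathbb{R}^{n})}+t^{\frac{1}{2\beta}}\|u(t)\|_{L^{\infty}(\mathbb{R}^{n})}+t^{\frac{w}{2\beta}}\|u(t)\|_{\dot{B}^{w-(2\beta-1)}_{\infty,\infty}(\mathbb{R}^{n})}\Big)<\infty,$$
and work in the product $X\times X$ with norm $\|(u,b)\|:=\|u\|_{X}+\|b\|_{X}$.

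Second, I would import from the proof of Theorem \ref{biggest critical spaces} the two ingredients that remain valid here: the linear bound $\|e^{-t(-\triangle)^{\beta}}a\|_{X}\lesssim\|a\|_{G_{n}^{-(2\beta-1)}(\mathbb{R}^{n})}$, and the bilinear bound for $B(u,v)(t):=\int_{0}^{t}e^{-(t-s)(-\triangle)^{\beta}}P\nabla\cdot(u\otimes v)(s)\,ds$. The key point is that $B$ is controlled through Proposition \ref{lemma for priori esti}, whose forcing term $f\in(\dot{B}^{w-2\beta+n/p}_{p,\infty})^{n\times n}$ is an \emph{arbitrary} tensor field; taking $p=\infty$ for the Besov and $L^{\infty}$ components (and the choice of $p$ used in Theorem \ref{biggest critical spaces} for the $G_{n}^{-(2\beta-1)}$ component) and $f=u\otimes v$, the product estimate
$$s^{\frac{w}{2\beta}+1-\frac1\beta}\|(u\otimes v)(s)\|_{\dot{B}^{w-2\beta}_{\infty,\infty}(\mathbb{R}^{n})}\lesssim\|u\|_{X}\|v\|_{X}$$
follows from the $L^{\infty}$ and Besov pieces of the $X$-norm together with the product law in Besov spaces. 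Feeding this into Proposition \ref{lemma for priori esti} gives $\|B(u,v)\|_{X}\lesssim\|u\|_{X}\|v\|_{X}$, and the same bound holds for each of $u\otimes u$, $b\otimes b$, $u\otimes b$, $b\otimes u$, since the estimate is genuinely bilinear and symmetric in its two arguments.

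Finally, I would close the argument with the standard abstract fixed-point lemma for bilinear equations: if $\|(e^{-t(-\triangle)^{\beta}}u_{0},e^{-t(-\triangle)^{\beta}}b_{0})\|$ is small enough, the quadratic map above admits a unique fixed point in a small ball of $X\times X$, with norm controlled by $\|u_{0}\|_{G_{n}^{-(2\beta-1)}(\mathbb{R}^{n})}+\|b_{0}\|_{G_{n}^{-(2\beta-1)}(\mathbb{R}^{n})}$; the smallness hypothesis on the data supplies this via the linear bound, yielding both stated estimates. I expect the only genuinely new point compared with Theorem \ref{biggest critical spaces} to be the bookkeeping for the cross terms: one must check that the magnetic coupling $b\otimes b$ in the velocity equation and $u\otimes b-b\otimes u$ in the magnetic equation are estimated by the same bilinear constant, so that summing the four contributions still produces a contraction on the product ball. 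Because each factor is measured in the identical norm $X$ and $B$ loses nothing when its two arguments differ, this introduces no essential difficulty, and the resulting contraction constant is an absolute multiple of the one appearing for (\ref{eq1e}).
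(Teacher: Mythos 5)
Your proposal is correct and follows essentially the same route as the paper: the paper likewise rewrites (\ref{eq1ee}) in mild form with the bilinear operator $B(u,v)=\int_{0}^{t}e^{-(t-s)(-\triangle)^{\beta}}P\nabla\cdot(u\otimes v)\,ds$, works in a product space built from the same $G_{n}^{-(2\beta-1)}$, weighted $L^{\infty}$ and weighted $\dot{B}^{w-(2\beta-1)}_{\infty,\infty}$ norms, estimates all four tensor terms through Proposition \ref{lemma for priori esti}, Lemma \ref{lemma in Besov space} and Lemma \ref{product}, and closes by the contraction mapping principle. The only differences are cosmetic (the paper verifies the contraction directly rather than invoking an abstract bilinear fixed-point lemma, and recovers the $L^{\infty}$ component by interpolation instead of putting it in the norm), so your argument matches the paper's proof.
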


Using Proposition \ref{lemma for priori esti},  we get the
 existence of solutions to equations (\ref{eq1e}) in
$\dot{B}^{-(2\beta-1)+\frac{n}{p}}_{p,\infty}(\mathbb{R}^{n}).$

\begin{proposition}\label{proposition 1}
Let $n\geq 2,$ $\beta\in(1/2,1),$ $n\leq p<\infty,$
$\max\{2\beta-n/p,2-2\beta\}<w<2\beta$ and  $1+n/p+w<4\beta.$ Assume
that $a\in
(\dot{B}^{-(2\beta-1)+\frac{n}{p}}_{p,\infty}(\mathbb{R}^{n}))^{n}$
and $\nabla \cdot a=0.$ If
 $\|a\|_{\dot{B}^{-(2\beta-1)+\frac{n}{p}}_{p,\infty}(\mathbb{R}^{n})}$
is small enough, then there exists a unique solution to equations
(\ref{eq1e}) satisfying
$$\|u(t)\|_{\dot{B}^{-(2\beta-1)+\frac{n}{p}}_{p,\infty}(\mathbb{R}^{n})}
+t^{\frac{2\beta-1}{2\beta}}\|u(t)\|_{L^{\infty}(\mathbb{R}^{n})}
+t^{\frac{w}{2\beta}}\|u(t)\|_{\dot{B}^{w-(2\beta-1)+\frac{n}{p}}_{p,\infty}(\mathbb{R}^{n})}
\lesssim\|a\|_{\dot{B}^{-(2\beta-1)+\frac{n}{p}}_{p,\infty}(\mathbb{R}^{n})}.$$
\end{proposition}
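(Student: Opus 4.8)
The plan is to solve the mild formulation $u=e^{-t(-\triangle)^{\beta}}a+B(u,u)$, where
$$B(u,v)(t)=-\int_{0}^{t}e^{-(t-s)(-\triangle)^{\beta}}P\nabla\cdot(u\otimes v)(s)\,ds,$$
by the contraction mapping principle in the Banach space $X$ whose norm is precisely the left-hand side of the asserted inequality,
$$\|u\|_{X}=\sup_{t>0}\Big(\|u(t)\|_{\dot{B}^{-(2\beta-1)+\frac{n}{p}}_{p,\infty}}+t^{\frac{2\beta-1}{2\beta}}\|u(t)\|_{L^{\infty}}+t^{\frac{w}{2\beta}}\|u(t)\|_{\dot{B}^{w-(2\beta-1)+\frac{n}{p}}_{p,\infty}}\Big).$$
By the standard abstract fixed-point lemma in a Banach space, it suffices to establish the linear bound $\|e^{-t(-\triangle)^{\beta}}a\|_{X}\lesssim\|a\|_{\dot{B}^{-(2\beta-1)+\frac{n}{p}}_{p,\infty}}$ together with the bilinear bound $\|B(u,v)\|_{X}\lesssim\|u\|_{X}\|v\|_{X}$; the smallness of $\|a\|$ then produces a unique fixed point, and the claimed a priori inequality is read off from $\|u\|_{X}\lesssim\|a\|$.

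First I would dispose of the linear term. The unweighted (critical) piece is just the uniform boundedness of $e^{-t(-\triangle)^{\beta}}$ on $\dot{B}^{-(2\beta-1)+\frac{n}{p}}_{p,\infty}$. The two weighted pieces follow from the smoothing estimates for the fractional heat semigroup, namely the gain of regularity $\|e^{-t(-\triangle)^{\beta}}f\|_{\dot{B}^{\sigma+\theta}_{p,\infty}}\lesssim t^{-\theta/(2\beta)}\|f\|_{\dot{B}^{\sigma}_{p,\infty}}$ for $\theta>0$, used with $\theta=w$ for the $\dot{B}^{w-(2\beta-1)+\frac{n}{p}}_{p,\infty}$ piece, and the $L^{p}$–$L^{\infty}$ decay $\|e^{-t(-\triangle)^{\beta}}f\|_{L^{\infty}}\lesssim t^{-(2\beta-1)/(2\beta)}\|f\|_{\dot{B}^{-(2\beta-1)+\frac{n}{p}}_{p,\infty}}$ (via $\dot{B}^{\frac{n}{p}}_{p,1}\hookrightarrow L^{\infty}$ and a gain of $2\beta-1$ derivatives) for the $L^{\infty}$ piece. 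The exponents are forced by the scale invariance of $X$.

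The heart of the matter is the bilinear estimate. Here I would feed everything into Proposition~\ref{lemma for priori esti} with $f=u\otimes v$. Its second inequality delivers the two weighted pieces of $\|B(u,v)\|_{X}$ as soon as one controls the single scalar quantity
$$\sup_{0<s<t}s^{\frac{w}{2\beta}}\|(u\otimes v)(s)\|_{\dot{B}^{w-2\beta+\frac{n}{p}}_{p,\infty}}\lesssim\|u\|_{X}\|v\|_{X},$$
and, since the $X$-norm is scale invariant, the time weight is forced, so it is enough to prove the instantaneous product inequality with the correct combination of the three instantaneous norms of $u$ and $v$. I would prove this by Bony's paraproduct decomposition: the target regularity $w-2\beta+\frac{n}{p}$ is strictly positive exactly because of the hypothesis $w>2\beta-\frac{n}{p}$ (which also forces $w>2\beta-1$, hence $\dot{B}^{w-(2\beta-1)+\frac{n}{p}}_{p,\infty}\hookrightarrow\dot{B}^{w-(2\beta-1)}_{\infty,\infty}$ with positive index), and this positivity is what legitimizes the product law, including the remainder term, at the endpoint third index $\infty$. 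Each factor is measured in $L^{\infty}$ (the weighted $L^\infty$ norm of $X$) against the other one derivative above the target, i.e.\ in $\dot{B}^{w-(2\beta-1)+\frac{n}{p}}_{p,\infty}$ (the weighted higher-regularity norm of $X$). The unweighted critical piece of $\|B(u,v)\|_{X}$ I would instead treat by a direct smoothing estimate on the Duhamel integral, bounding $\|(u\otimes v)(s)\|_{\dot{B}^{-(2\beta-1)+\frac{n}{p}}_{p,\infty}}\lesssim s^{-(2\beta-1)/(2\beta)}\|u\|_{X}\|v\|_{X}$ (product of the critical norm and the weighted $L^\infty$ norm via the paraproduct terms) and invoking the convolution estimate
$$\int_{0}^{t}(t-s)^{-\frac{1}{2\beta}}s^{-\frac{2\beta-1}{2\beta}}\,ds\lesssim 1,$$
which converges precisely because $\beta>1/2$ and is scale invariant since the two exponents sum to $1$.

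The main obstacle is exactly this product/bilinear estimate. Two points require care: (i) closing the paraproduct at summability index $q=\infty$, which dictates how the three instantaneous norms of $X$ may legitimately be combined and where the homogeneous low-frequency interplay among them is delicate; this is where the hypotheses $w>2\beta-\frac{n}{p}$ and $1+\frac{n}{p}+w<4\beta$ are consumed (the latter through the integrability of the time convolutions encapsulated in Proposition~\ref{lemma for priori esti}); and (ii) checking that the various time-weight exponents sum to the scale-invariant value, so that the $s$-integrations reproduce the weights defining $X$. Once the bilinear bound is in hand, uniqueness is automatic: the difference $w=u-\tilde u$ of two solutions satisfies $w=B(w,u)+B(\tilde u,w)$, and the same estimate makes the solution map a contraction on a small ball of $X$, so the small solution is unique.
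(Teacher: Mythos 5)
Your overall architecture (a fixed point argument in a scale-invariant space, with Proposition \ref{lemma for priori esti} handling the Duhamel term) is the same as the paper's, but the step you yourself identify as the heart of the matter --- the bilinear estimate --- contains a genuine error. You propose to bound $\|u\otimes v\|_{\dot{B}^{w-2\beta+\frac{n}{p}}_{p,\infty}}$ by pairing the $L^{\infty}$ norm of one factor against the other factor ``one derivative above the target,'' i.e.\ in $\dot{B}^{w-(2\beta-1)+\frac{n}{p}}_{p,\infty}$. No such product law holds: in the homogeneous scale there is no embedding $\dot{B}^{\sigma+1}_{p,\infty}\hookrightarrow\dot{B}^{\sigma}_{p,\infty}$, and the proposed inequality is not dilation invariant --- under $u\mapsto u(\lambda\cdot)$, $v\mapsto v(\lambda\cdot)$ its left side scales like $\lambda^{w-2\beta}$ while its right side scales like $\lambda^{w-2\beta+1}$, so it fails as $\lambda\to 0$. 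Equivalently, the time weights do not close: using the pieces of your $X$-norm, the best bound available is $s^{\frac{w}{2\beta}}\|u(s)\|_{L^{\infty}}\|v(s)\|_{\dot{B}^{w-(2\beta-1)+\frac{n}{p}}_{p,\infty}}\lesssim s^{-\frac{2\beta-1}{2\beta}}\|u\|_{X}\|v\|_{X}$, which is unbounded as $s\to 0$. This is precisely the consistency check you flag in your point (ii), and the combination you propose fails it.

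What is actually needed is the pairing at the \emph{same} regularity as the target, $\|u\otimes v\|_{\dot{B}^{w-2\beta+\frac{n}{p}}_{p,\infty}}\lesssim\|u\|_{L^{\infty}}\|v\|_{\dot{B}^{w-2\beta+\frac{n}{p}}_{p,\infty}}+\|v\|_{L^{\infty}}\|u\|_{\dot{B}^{w-2\beta+\frac{n}{p}}_{p,\infty}}$, legitimate because $0<w-2\beta+\frac{n}{p}<1$ (the paper obtains it from the difference characterization, Lemma \ref{equiv norm in besov}, rather than from paraproducts). But then one must control the intermediate norm $\|u(s)\|_{\dot{B}^{w-2\beta+\frac{n}{p}}_{p,\infty}}$ with its proper time weight, and this quantity is not one of the three components of your $X$-norm. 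The missing ingredient is exactly the paper's Lemma \ref{lemma in Besov space}: by real interpolation between the critical component and the weighted higher-regularity component, both $t^{\frac{2\beta-1}{2\beta}}\|u(t)\|_{L^{\infty}}$ and the weighted intermediate norm are controlled by the two Besov pieces of the norm, and the two weight exponents $\frac{2\beta-1}{2\beta}$ and $\frac{w-(2\beta-1)}{2\beta}$ sum to $\frac{w}{2\beta}$, so the estimate closes. Your outline never invokes this interpolation step, and without it the bilinear bound --- hence the contraction --- cannot be established. A smaller instance of the same problem occurs in your treatment of the unweighted critical piece: the hypotheses allow $\frac{n}{p}<2\beta-1$, in which case the regularity $-(2\beta-1)+\frac{n}{p}$ is negative and the paraproduct remainder cannot be closed with $L^{\infty}$ against the critical norm alone; the paper avoids this entirely by routing the critical piece, too, through Proposition \ref{lemma for priori esti}, which only consumes the product in the positive-regularity space $\dot{B}^{w-2\beta+\frac{n}{p}}_{p,\infty}$. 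The remaining parts of your outline (linear semigroup bounds, uniqueness via the difference equation) do match the paper's argument.
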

\begin{remark}
In \cite{J Wu 3},Wu  established a result similar to Proposition
\ref{proposition 1} by using lower bounds for the integral involving
$(-\triangle)^{\beta}.$
\end{remark}

Now, we study the properties of $BMO^{-\zeta}(\mathbb{R}^{n}).$
\begin{proposition}\label{th 1}
$(BMO^{-\zeta}\ \hbox{and Besov spaces})$ Let $\beta\in
(\frac{1}{2},1).$ For any $f\in \mathcal{S}'(\mathbb{R}^{n})$ and
$t>0,$   we have
$$r^{
\zeta}\|e^{-r^{2\beta}(-\triangle)^\beta}f\|_{L^{\infty}}\lesssim\left(r^{-n}
 \int_{0}^{r^{2\beta}}\int_{|x-x_{0}|\leq
 r}s^{\zeta-1+\frac{1-\beta}{\beta}}|e^{-s(-\triangle)^\beta}f(x)|^{2}dxds\right)^{1/2},$$
that is, $BMO^{-\zeta}(\mathbb{R}^{n})\hookrightarrow
\dot{B}^{-\zeta}_{\infty,\infty}(\mathbb{R}^{n}).$
\end{proposition}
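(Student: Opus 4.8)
The plan is to prove the displayed pointwise bound for each fixed $x_{0}\in\mathbb{R}^{n}$ and $r>0$, and then recover the embedding by taking suprema. Setting $T=r^{2\beta}$, I would use the standard semigroup characterization $\|f\|_{\dot{B}^{-\zeta}_{\infty,\infty}}\approx\sup_{t>0}t^{\zeta/(2\beta)}\|e^{-t(-\triangle)^{\beta}}f\|_{L^{\infty}}$; writing $t=r^{2\beta}$ identifies the left-hand side $r^{\zeta}\|e^{-r^{2\beta}(-\triangle)^{\beta}}f\|_{L^{\infty}}$ as the quantity whose supremum over $r$ is $\|f\|_{\dot{B}^{-\zeta}_{\infty,\infty}}$, while the right-hand side of the displayed inequality is exactly (bounded by) $\|f\|_{BMO^{-\zeta}(\mathbb{R}^{n})}$. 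Thus the real content is the pointwise inequality, and the ``that is'' follows immediately by taking $\sup_{x_{0}}$ and then $\sup_{r}$.

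For the pointwise bound, fix $x_{0}$, set $T=r^{2\beta}$ and write $u(s,\cdot)=e^{-s(-\triangle)^{\beta}}f$. The key identity is the reproducing formula $u(T,x_{0})=e^{-(T-s)(-\triangle)^{\beta}}u(s,\cdot)(x_{0})=\int_{\mathbb{R}^{n}}K_{T-s}^{\beta}(x_{0}-y)u(s,y)\,dy$, valid for every $s\in(0,T)$. I would average this over the time window $s\in(T/4,T/2)$ --- chosen so that $s$ and $T-s$ are both comparable to $T=r^{2\beta}$, which prevents any concentration of the kernel and any singularity of the time weight --- to get $u(T,x_{0})=\frac{4}{T}\int_{T/4}^{T/2}\int_{\mathbb{R}^{n}}K_{T-s}^{\beta}(x_{0}-y)u(s,y)\,dy\,ds$, and then split the spatial integral into the near field $|y-x_{0}|<r$ and the far field $|y-x_{0}|\geq r$.

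On the near field I would apply Cauchy--Schwarz in $(s,y)$, factoring the time weight $s^{(\zeta-\beta)/\beta}$ that appears in $BMO^{-\zeta}(\mathbb{R}^{n})$: the factor $s^{(\zeta-\beta)/(2\beta)}|u(s,y)|$ is controlled, after enlarging the domain to $(0,r^{2\beta})\times B(x_{0},r)$, by $r^{n/2}\|f\|_{BMO^{-\zeta}}$, while the remaining kernel factor is estimated using the scaling relation $\|K_{\tau}^{\beta}\|_{L^{2}}^{2}\approx\tau^{-n/(2\beta)}$. Since $s,T-s\approx r^{2\beta}$ throughout the window, a direct bookkeeping of powers of $r$ produces precisely $r^{-\zeta}\|f\|_{BMO^{-\zeta}}$; this is the point at which the specific exponent of the time weight forces itself and confirms the definition of $BMO^{-\zeta}(\mathbb{R}^{n})$.

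The far field is where the real work lies. Here I would invoke the pointwise kernel decay $|K_{\tau}^{\beta}(x)|\lesssim\tau\big(\tau^{1/(2\beta)}+|x|\big)^{-(n+2\beta)}$, which for $\tau=T-s\approx r^{2\beta}$ and $|x_{0}-y|=\rho\geq r$ gives $|K_{T-s}^{\beta}(x_{0}-y)|\lesssim r^{-n}(\rho/r)^{-(n+2\beta)}$, and decompose $\{|y-x_{0}|\geq r\}$ into dyadic annuli $A_{j}=\{2^{j}r\leq\rho<2^{j+1}r\}$. On each annulus, Cauchy--Schwarz (in $y$, then in $s$) bounds the contribution by the $BMO^{-\zeta}$ integral over the enlarged ball $B(x_{0},2^{j+1}r)$, which the definition controls by $(2^{j+1}r)^{n}\|f\|_{BMO^{-\zeta}}^{2}$ at the scale $R_{j}=2^{j+1}r$. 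The enlargement costs a factor $2^{jn}$ from the ball volume, but the kernel decay contributes $2^{-j(n+2\beta)}$, so the series reduces to $\sum_{j}2^{-2\beta j}$, which converges --- this is exactly where $\beta>0$ (equivalently $\beta\in(1/2,1)$) is used --- and again sums to $r^{-\zeta}\|f\|_{BMO^{-\zeta}}$. Combining the near- and far-field estimates yields $|u(T,x_{0})|\lesssim r^{-\zeta}\|f\|_{BMO^{-\zeta}}$ uniformly in $x_{0}$ and $r$, which is the claim. The main obstacle is the far-field summation: one must check that the growth of the local $BMO^{-\zeta}$ integrals over the dilated balls is strictly dominated by the $(n+2\beta)$-decay of the kernel, and keep the powers of the time weight consistent across scales.
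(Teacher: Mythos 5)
Your proposal is correct, and its skeleton is the same as the paper's: factor the semigroup as $e^{-T(-\triangle)^{\beta}}f=e^{-(T-s)(-\triangle)^{\beta}}\bigl(e^{-s(-\triangle)^{\beta}}f\bigr)$, average in $s$ over a window of length $\approx T$, apply Cauchy--Schwarz against the positive kernel with the $BMO^{-\zeta}$ time weight, insert the Miao--Yuan--Zhang decay bound $|K_{\tau}^{\beta}(x)|\lesssim \tau^{-n/(2\beta)}\bigl(1+|x|\tau^{-1/(2\beta)}\bigr)^{-(n+2\beta)}$, and conclude with the semigroup characterization of $\dot{B}^{-\zeta}_{\infty,\infty}(\mathbb{R}^{n})$. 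The differences are exactly where you are more careful than the paper. First, you average over $s\in(T/4,T/2)$, so that $s$ and $T-s$ are both comparable to $T$; the paper averages over $(0,t/2)$ and therefore needs a second, weighted Cauchy--Schwarz in time (with weight $u^{(\zeta-1)/\beta}$), which silently requires convergence of $\int_{0}^{t/2}u^{-(\zeta-1)/\beta}du$. Second, and more substantively, after inserting the kernel bound the paper gestures at a decomposition into unit cubes and then simply restricts the spatial integral to $|x-x_{0}|\leq t^{1/(2\beta)}$, i.e.\ it drops the far field; as you observe, the far-field contribution cannot be controlled by the single local integral centered at $x_{0}$ at scale $r$ (the literal pointwise inequality of the statement fails for fixed $x_{0}$ -- take $f$ a bump far from $x_{0}$), but only by the full $BMO^{-\zeta}$ norm, i.e.\ the supremum over centers and scales. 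Your dyadic-annulus argument, trading the $2^{jn}$ volume growth of the enlarged balls $B(x_{0},2^{j+1}r)$ against the $2^{-j(n+2\beta)}$ kernel decay to obtain $\sum_{j}2^{-2\beta j}<\infty$, supplies precisely the step the paper glosses over, and it is what makes the embedding $BMO^{-\zeta}(\mathbb{R}^{n})\hookrightarrow \dot{B}^{-\zeta}_{\infty,\infty}(\mathbb{R}^{n})$ -- the actual content of the proposition -- rigorous. Two minor remarks: the convergence of the annulus sum needs only $\beta>0$, not $\beta\in(1/2,1)$, so your parenthetical ``equivalently'' is inaccurate but harmless; and your time weight $s^{(\zeta-\beta)/\beta}$ matches the paper's definition of $BMO^{-\zeta}(\mathbb{R}^{n})$, whereas the proposition's display and the paper's proof use the mutually inconsistent weights $s^{\zeta-1+\frac{1-\beta}{\beta}}$ and $s^{\frac{\zeta-1}{\beta}}$ -- a typographical inconsistency of the paper, not a defect of your argument.
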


\begin{proposition}\label{fractioanl derivative of BMO}
A distribution $f$ belongs to $BMO^{-\zeta}(\mathbb{R}^{n})$ if and
only if there exists a distribution $g\in BMO(\mathbb{R}^{n})$ such
that $f=(-\triangle)^{\frac{\zeta}{2}}g.$
\end{proposition}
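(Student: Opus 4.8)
The plan is to reduce the equivalence to a single comparison of Carleson-measure square functions and then to invoke the Fefferman--Stein characterization recorded in (\ref{eq cha q a b}). Since $(-\triangle)^{\zeta/2}$ acts bijectively on $\mathcal{S}'(\mathbb{R}^n)$ modulo polynomials, with inverse $(-\triangle)^{-\zeta/2}$, it is enough to fix $g\in\mathcal{S}'(\mathbb{R}^n)$, put $f=(-\triangle)^{\zeta/2}g$, and establish the norm equivalence
$$\|f\|_{BMO^{-\zeta}(\mathbb{R}^n)}\approx\|g\|_{BMO(\mathbb{R}^n)}.$$
Both implications of the asserted ``if and only if'' then follow at once: if $f\in BMO^{-\zeta}(\mathbb{R}^n)$ the distribution $g=(-\triangle)^{-\zeta/2}f$ lies in $BMO(\mathbb{R}^n)$ and satisfies $f=(-\triangle)^{\zeta/2}g$, while conversely any such $g\in BMO(\mathbb{R}^n)$ produces $f\in BMO^{-\zeta}(\mathbb{R}^n)$.

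First I would carry out the substitution $t=\tau^{2\beta}$ in the integral defining $\|f\|_{BMO^{-\zeta}}^2$. Writing $e^{-t(-\triangle)^\beta}f=(-\triangle)^{\zeta/2}e^{-t(-\triangle)^\beta}g$ and using the semigroup scaling, the operator $\tau^{\zeta}(-\triangle)^{\zeta/2}e^{-\tau^{2\beta}(-\triangle)^\beta}$ has symbol $m(\tau\xi)$ with $m(\eta)=|\eta|^{\zeta}e^{-|\eta|^{2\beta}}$, so it is convolution with $\phi_\tau(y)=\tau^{-n}\phi(y/\tau)$, where $\widehat{\phi}=m$. A short weight computation gives $t^{(\zeta-\beta)/\beta}\,dt=2\beta\,\tau^{2\zeta-1}\,d\tau$ and $|e^{-t(-\triangle)^\beta}f(y)|^2=\tau^{-2\zeta}|\phi_\tau\ast g(y)|^2$, whence
$$\|f\|_{BMO^{-\zeta}(\mathbb{R}^n)}^2\approx\sup_{x,r}r^{-n}\int_0^{r}\int_{|y-x|<r}|\phi_\tau\ast g(y)|^2\,\frac{d\tau}{\tau}\,dy.$$
The very same substitution turns the semigroup form (\ref{semi}) of $\|g\|_{BMO}^2$ into an expression of identical shape, with $\phi$ replaced by the kernel whose symbol is $i\eta_je^{-|\eta|^{2\beta}}$.

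It then remains to verify that $\phi$ is admissible for (\ref{eq cha q a b}). The cancellation condition is immediate, since $\widehat{\phi}(0)=m(0)=0$ because $\zeta>0$. Nondegeneracy is equally transparent: $m$ is positive off the origin, so for every $\eta\neq0$ the quantity $\int_0^\infty|m(\tau\eta)|^2\,\tau^{-1}\,d\tau$ is a fixed positive constant by radial scaling, which furnishes the Calderón reproducing formula underlying the lower bound. Feeding this $\phi$ (and, for the $BMO$ side, the gradient kernel) into (\ref{eq cha q a b}) delivers the norm equivalence and hence the proposition.

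The step I expect to be the main obstacle is the spatial decay of $\phi$ demanded by (\ref{prp psi}). Because $m(\eta)=|\eta|^{\zeta}e^{-|\eta|^{2\beta}}$ fails to be smooth at the origin, naive integration by parts does not reach the full decay, and one must analyze the inverse transform by isolating the leading homogeneous singularity $|\eta|^\zeta$, whose transform is comparable to $|x|^{-(n+\zeta)}$, from a rapidly decreasing remainder coming from $(1-\theta)m$ for a cutoff $\theta$ near the origin. This yields $|\phi(x)|\lesssim(1+|x|)^{-(n+\zeta)}$; the exponent is weaker than the literal $n+1$ in (\ref{prp psi}) but is still of the form $(1+|x|)^{-n-\varepsilon}$ with $\varepsilon=\zeta>0$, which is all the Fefferman--Stein characterization actually requires. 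Verifying this decay carefully, together with the analogous and easier estimate for the gradient kernel, is the technical heart of the argument.
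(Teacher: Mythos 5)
Your proposal is correct in substance and rests on exactly the same analytic core as the paper's proof: the kernel with symbol $|\eta|^{\zeta}e^{-|\eta|^{2\beta}}$, the verification of its cancellation, its $(1+|x|)^{-(n+\zeta)}$ spatial decay (the paper asserts the same bound for its kernel $K_{\zeta}$, and its Lemma \ref{carleson measure BMO} only demands decay of order $n+c$ for some $c$, so your concern about the exponent $n+1$ in (\ref{prp psi}) is resolved the same way there), and the scale-invariant square-function integral, all fed into the Fefferman--Stein/Carleson-measure characterization of $BMO$. Where you genuinely diverge is in the architecture. The paper proves the two implications separately: for the direction $f\in BMO^{-\zeta}\Rightarrow f=(-\triangle)^{\zeta/2}g$ it constructs $g$ explicitly through Littlewood--Paley blocks $\widehat{g_{j}}(\xi)=|\xi|^{-\zeta}\phi(2^{-j}\xi)\widehat{f}(\xi)$, first showing $g\in\dot{B}^{0}_{\infty,\infty}(\mathbb{R}^{n})$ (via Proposition \ref{th 1}) and then upgrading to $BMO$ with the Carleson lemma applied to the single kernel $\eta$. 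You instead collapse both directions into one two-sided norm equivalence $\|(-\triangle)^{\zeta/2}g\|_{BMO^{-\zeta}}\approx\|g\|_{BMO}$ and appeal to invertibility of $(-\triangle)^{\zeta/2}$ on $\mathcal{S}'(\mathbb{R}^{n})$ modulo polynomials. Your packaging is cleaner and, usefully, makes explicit the nondegeneracy (the positive lower bound for $\int_{0}^{\infty}|m(\tau\eta)|^{2}\tau^{-1}d\tau$ underlying a Calder\'on reproducing formula) that the converse direction needs, something the paper's statement of Lemma \ref{carleson measure BMO} leaves implicit. What the paper's longer route buys is a genuine tempered-distribution representative of $g$ (note its normalization $\sum_{j<0}(g_{j}-g_{j}(0))+\sum_{j>0}g_{j}$), whereas your reduction works in $\mathcal{S}'$ modulo polynomials; to close this small gap you should add one remark: the identity $f=(-\triangle)^{\zeta/2}g$ obtained from the modulo-polynomial inversion holds a priori only up to a polynomial $P$, and $P$ must vanish because both $f$ and $(-\triangle)^{\zeta/2}g$ have finite $BMO^{-\zeta}$ seminorm while no nonzero polynomial does (its seminorm diverges as $r\to\infty$ in the defining supremum).
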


\begin{remark}
(i) Zhou and Gala established  results similar to Propositions
\ref{th 1}-\ref{fractioanl derivative of BMO} for
$BMO^{-\zeta}(\mathbb{R}^{n})$
 defined by heat semigroup $e^{t\triangle}.$ Thus,   $BMO^{-\zeta}(\mathbb{R}^{n})$ is independent of
$e^{-t(-\triangle)^{\beta}}$ for $\beta\in(1/2,1].$\\
(ii) It follows from the definition of
 $BMO^{-\zeta}(\mathbb{R}^{n})$ and
$Q_{\alpha;\infty}^{\beta,-1}(\mathbb{R}^{n})$ (see \cite{Pengtao Li
Zhicun Zhai})
 that when $\alpha=1-\beta,$
$Q_{\alpha;\infty}^{\beta,-1}(\mathbb{R}^{n})=BMO^{-\zeta}(\mathbb{R}^{n})$
for $\zeta=2\beta-1.$
 Thus,  we can
obtain the existence of mild solution to equations (\ref{eq1e}) with
initial data in $BMO^{-(2\beta-1)}(\mathbb{R}^{n})$ as follows.
\end{remark}
 We
need to define some notations.
\begin{definition}  \label{X space} Let $1/2<\beta<1.$ \\
(i)  A tempered distribution $f$ on $\mathbb{R}^{n}$ belongs to
$BMO_{T}^{-(2\beta-1)}(\mathbb{R}^{n})$ provided
 $$\|f\|_{BMO_{T}^{-(2\beta-1)}(\mathbb{R}^{n})}
 =\sup_{x\in\mathbb{R}^{n},r\in(0,T)}\left(r^{-n}
\int_{0}^{r^{2\beta}}\int_{|y-x|<r}| K_{t}^\beta\ast
f(y)|^{2}t^{\frac{\beta-1}{\beta}}dydt\right)^{1/2}<\infty;
$$
 (ii) A tempered distribution $f$ on $\mathbb{R}^{n}$ belongs to
$\overline{VBMO^{-(2\beta-1)}}(\mathbb{R}^{n})$ provided
$\lim\limits_{T\longrightarrow
0}\|f\|_{BMO^{-(2\beta-1)}_{T}(\mathbb{R}^{n})}=0;$\\
 (iii) A function $g$ on $\mathbb{R}^{1+n}_{+}$ belongs to the space
$X^{\beta}_{T}(\mathbb{R}^{n})$ provided
\begin{eqnarray*}
\|g\|_{X^{\beta}_{T}(\mathbb{R}^{n})}&=&\sup_{t\in(0,T)}t^{1-\frac{1}{2\beta}}\|g(t,\cdot)\|_{L^{\infty}(\mathbb{R}^{n})}\\
&+&\sup_{x\in
\mathbb{R}^{n},r^{2\beta}\in(0,T)}\left(r^{-n}\int_{0}^{r^{2\beta}}\int_{|y-x|<r}|g(t,y)|^{2}t^{\frac{\beta-1}{\beta}}dydt\right)^{1/2}<\infty.
\end{eqnarray*}
\end{definition}
\begin{proposition}\cite{Pengtao Li Zhicun Zhai} \label{th 3}
Let $n\geq 2,$  $1/2<\beta<1.$ Then\\
(i) The fractional Navier-Stokes system (\ref{eq1e}) has a unique
small global mild solution in $(X^{\beta}_{\infty})^{n}$ for all
initial data $a$ with $\nabla\cdot a=0$ and
$\|a\|_{(BMO_{\infty}^{-(2\beta-1)})^{n}}$ being small.\\
(ii) For any $T\in(0,\infty)$ there is an $\varepsilon>0$ such that
the fractional Navier-Stokes system (\ref{eq1e}) has a unique small
mild solution in $(X_{T}^{\beta})^{n}$ on $(0,T)\times
\mathbb{R}^{n}$ when the initial data $a$ satisfies $\nabla\cdot
a=0$ and $\|a\|_{(BMO_{T}^{-(2\beta-1)})^{n}}\leq \varepsilon.$ In
particular for all $a\in (\overline{VBMO^{-(2\beta-1)}})^{n}$ with
$\nabla\cdot a=0$ there exists a unique small local mild solution in
$(X_{T}^{\beta})^{n}$ on $(0,T)\times \mathbb{R}^{n}.$
 \end{proposition}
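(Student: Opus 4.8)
The plan is to construct the solution by Picard iteration, realizing it as the fixed point of the map $u \mapsto e^{-t(-\triangle)^{\beta}}a + B(u,u)$, where
$$B(u,v)(t) = -\int_{0}^{t}e^{-(t-s)(-\triangle)^{\beta}}P\nabla\cdot(u\otimes v)(s)\,ds.$$
By the standard abstract bilinear contraction lemma, it suffices to verify two estimates in the Banach space $(X^{\beta}_{T})^{n}$: first, the control of the linear evolution $\|e^{-t(-\triangle)^{\beta}}a\|_{X^{\beta}_{T}} \lesssim \|a\|_{BMO_{T}^{-(2\beta-1)}}$; and second, the boundedness $\|B(u,v)\|_{X^{\beta}_{T}} \lesssim \|u\|_{X^{\beta}_{T}}\|v\|_{X^{\beta}_{T}}$ with a constant independent of $T$. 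Granting these, whenever $\|a\|_{BMO_{T}^{-(2\beta-1)}}$ lies below the threshold fixed by the bilinear constant, the lemma produces a unique small solution with $\|u\|_{X^{\beta}_{T}} \lesssim \|a\|_{BMO_{T}^{-(2\beta-1)}}$.

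For the linear estimate, the square-function component of the $X^{\beta}_{T}$-norm of $e^{-t(-\triangle)^{\beta}}a$ coincides with $\|a\|_{BMO_{T}^{-(2\beta-1)}}$ by the very definitions of the two norms. For the $L^{\infty}$ component I would invoke Proposition \ref{th 1}: with $\zeta = 2\beta-1$ it yields $\|e^{-r^{2\beta}(-\triangle)^{\beta}}a\|_{L^{\infty}} \lesssim r^{-(2\beta-1)}\|a\|_{BMO^{-(2\beta-1)}}$, and substituting $t = r^{2\beta}$ gives exactly $t^{1-\frac{1}{2\beta}}\|e^{-t(-\triangle)^{\beta}}a\|_{L^{\infty}} \lesssim \|a\|$, as required.

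The core of the argument is the bilinear estimate, which again splits into the two pieces of the $X^{\beta}_{T}$-norm. The essential input is the kernel of the fractional Oseen operator $\mathcal{O}_{\tau} = P\nabla e^{-\tau(-\triangle)^{\beta}}$: from the scaling $K_{\tau}^{\beta}(x) = \tau^{-n/(2\beta)}K_{1}^{\beta}(\tau^{-1/(2\beta)}x)$ together with the pointwise decay of $K_{1}^{\beta}$ and its derivatives, one shows that $\mathcal{O}_{\tau}$ is convolution against a kernel with $\|\mathcal{O}_{\tau}\|_{L^{1}} \lesssim \tau^{-1/(2\beta)}$ and controlled off-diagonal decay. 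For the $L^{\infty}$ piece this gives $\|B(u,v)(t)\|_{L^{\infty}} \lesssim \int_{0}^{t}(t-s)^{-1/(2\beta)}\|u(s)\|_{L^{\infty}}\|v(s)\|_{L^{\infty}}\,ds$; inserting $\|u(s)\|_{L^{\infty}} \le s^{-(1-\frac{1}{2\beta})}\|u\|_{X^{\beta}_{T}}$ and evaluating the resulting Beta integral, which converges precisely because $1/2<\beta<1$, recovers $t^{1-\frac{1}{2\beta}}\|B(u,v)(t)\|_{L^{\infty}} \lesssim \|u\|_{X^{\beta}_{T}}\|v\|_{X^{\beta}_{T}}$.

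The hard part is the Carleson-measure piece: for a fixed ball $B(x_{0},r)$ with $r^{2\beta}<T$ one must bound $r^{-n}\int_{0}^{r^{2\beta}}\int_{|y-x_{0}|<r}|B(u,v)(t,y)|^{2}t^{(\beta-1)/\beta}\,dy\,dt$. I would split the source $u\otimes v$ into a local part supported on a fixed dilate of the ball, in both space and time, and a global remainder. The local part is handled by an $L^{2}$-boundedness and Carleson-embedding argument, using the $L^{\infty}$ decay of $u$ and $v$ to control $u\otimes v$ on the local region; the global part is estimated through the off-diagonal decay of $\mathcal{O}_{\tau}$, which renders the far-field contribution integrable against the Carleson weight. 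Combining the two components yields the uniform bilinear bound, after which the abstract lemma immediately gives statement (i). For statement (ii), the definition of $\overline{VBMO^{-(2\beta-1)}}(\mathbb{R}^{n})$ forces $\|a\|_{BMO_{T}^{-(2\beta-1)}} \to 0$ as $T \to 0$, so the smallness threshold is met on a sufficiently short time interval, producing the local solution. The principal obstacle throughout is the Carleson estimate for the global part of the bilinear term, where the interplay between the fractional kernel's off-diagonal decay and the time weight $t^{(\beta-1)/\beta}$ must be tracked carefully.
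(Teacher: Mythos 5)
You should know at the outset that this paper never actually proves Proposition \ref{th 3}: the proposition carries the citation to Li--Zhai, and the section headed ``Proof of Proposition \ref{th 3}'' is mislabeled --- its content is the proof of Proposition \ref{th 1} (the embedding $BMO^{-\zeta}(\mathbb{R}^{n})\hookrightarrow\dot{B}^{-\zeta}_{\infty,\infty}(\mathbb{R}^{n})$), exactly as the introduction announces (``In final two section, we establish Propositions \ref{th 1} and \ref{fractioanl derivative of BMO}''). So the only meaningful comparison is with the Koch--Tataru-type argument of the cited reference, and your plan is indeed that same scheme: bilinear fixed point in $X^{\beta}_{T}$, a linear estimate, and a bilinear estimate split into the $L^{\infty}$ piece and the Carleson piece. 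Your executed steps are sound: the square-function part of the linear estimate is definitional, the $L^{\infty}$ part follows from Proposition \ref{th 1} at $\zeta=2\beta-1$, and your $L^{\infty}$ bilinear bound rests on the kernel estimate $\|P\nabla e^{-\tau(-\triangle)^{\beta}}\|_{L^{1}}\lesssim\tau^{-1/(2\beta)}$ (the same bound underlying Lemma \ref{product}) together with a Beta integral whose convergence uses precisely $1/2<\beta<1$.

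The genuine gap is the one you flag yourself: the Carleson-measure bound for $B(u,v)$ is asserted, not proved, and that estimate is the entire content of a Koch--Tataru-type theorem. Saying ``local part by $L^{2}$ boundedness and Carleson embedding, global part by off-diagonal decay'' names the correct strategy, but none of the work is done: you need the $L^{2}$ (energy/maximal regularity) estimate for $\int_{0}^{t}e^{-(t-s)(-\triangle)^{\beta}}P\nabla\cdot F\,ds$ applied to the localized source, the Cauchy--Schwarz mechanism that trades $u\otimes v$ against one $L^{\infty}$ factor (with its $s^{-(1-\frac{1}{2\beta})}$ singularity) and one Carleson factor, and the verification that the far-field kernel decay of the fractional Oseen operator is summable against the weight $t^{(\beta-1)/\beta}$ over the Carleson box --- this last point is where the restriction $\beta\in(1/2,1)$ and the decay rate $(1+|x|)^{-(n+1)}$ of the kernel must be played off against each other. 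Until those estimates are written out, what you have is a correct plan (matching the cited source) rather than a proof; your treatment of part (ii) --- uniformity of the bilinear constant in $T$ plus $\|a\|_{BMO_{T}^{-(2\beta-1)}}\to 0$ for $\overline{VBMO^{-(2\beta-1)}}$ data --- is fine once the bilinear estimate is in hand.
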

\begin{remark}
(i)   $G_{n}^{-(2\beta-1)}(\mathbb{R}^{n})$ and
$BMO^{-(2\beta-1)}(\mathbb{R}^{n})$ are different critical spaces
for equations (\ref{eq1e}) and no inclusion relation between them.\\
(ii) Proposition \ref{th 3} is an generalization   of Koch and
Tataru \cite[Theorem 2-3]{H. Koch D. Tataru} since
$BMO^{-(2\beta-1)}(\mathbb{R}^{n})=(-\triangle)^{-\frac{2\beta-1}{2}}BMO(\mathbb{R}^{n}).$\\
(iii) Similar to Proposition \ref{th 3}, we can consider the
well-posedness for dissipative quasi-geostrophic equations in
$BMO^{-(2\beta-1)}(\mathbb{R}^{2}).$
\end{remark}

The rest of this paper is organized as follows.  In Section 2, we
give the definition and some basic properties of Besov spaces. In
Section 3, we prove Proposition \ref{lemma for priori esti}. In
Section 4, we verify Theorem \ref{biggest critical spaces} based on
a prior estimates for fractional Navier-Stokes equations. In Section
5, we show Theorem \ref{biggest critical spaces 2} by the
contraction mapping principle. In Section 6,  we demonstrate
Proposition \ref{proposition 1} by applying the contraction mapping
principle and  a prior estimates for fractional Navier-Stokes
equations. In final two section, we establish   Propositions \ref{th
1} and \ref{fractioanl derivative of BMO}.

\section{Preliminary Lemmas}

In this section, we provide the definition  and several  properties
of the homogeneous Besov spaces.

 We recall  the definition  of homogeneous Besov spaces.
 For details, see  Berg and Lofstrom \cite{Berg
Lofstrom} and Triebel \cite{H. Triebel 2}-\cite{H. Triebel}. We
start with the fourier transform. The Fourier transform
$\widehat{f}$ of $f\in \mathcal {S}$ is defined as
$$\widehat{f}(\xi)=(2\pi)^{-n/2}\int_{\mathbb{R}^{n}}f(x)e^{- x\cdot \xi}dx.$$
Here $\mathcal{S}(\mathbb{R}^{n})$ denotes the Schwartz class of
rapidly decreasing smooth functions and
$\mathcal{S}'(\mathbb{R}^{n})$ is the space of tempered
distributions. The fractional power of the Laplacian can be defined
by the Fourier transform. For $\theta\in \mathbb{R},$
$$\widehat{(-\triangle)^{\theta/2}f}(\xi)=|\xi|^{\theta}\widehat{f}(\xi).$$
We will use $f^{\vee}$ to denote the inverse  Fourier transform of
$f.$ Then we introduce the Littlewood-Paley decomposition by means
of $\{\varphi_{j}\}_{j=-\infty}^{\infty}.$ Take a function $\phi\in
C_{0}^{\infty}$ with
$\hbox{supp}(\phi)=\{\xi\in\mathbb{R}^{n}:1/2<|\xi|\leq 2\}$ such
that $\sum_{j=-\infty}^{\infty}\phi(2^{-j}\xi)=1$ for all
$\xi\neq0.$ Then we define functions $\varphi_{j}(j=0, \pm 1,\pm
2,\cdots)$  as
$$\widehat{\varphi_{j}}(\xi)=\phi(2^{-j}\xi).$$
Let $\triangle_{j}f=\varphi_{j}\ast f,$ for $j=0,\pm 1,\pm2,\pm3,
\cdots.$ Then, for $s\in \mathbb{R}$ and $1\leq p,q\leq \infty,$ we
define
$$\|f\|_{\dot{B}^{s}_{p,q}(\mathbb{R}^{n})}
=\left(\sum_{j=-\infty}^{\infty}(2^{sj}\|\triangle_{j}
f\|_{L^{p}(\mathbb{R}^{n})})^{q}\right)^{1/q}, \ \ 1\leq q<\infty$$
$$\|f\|_{\dot{B}^{s}_{p,\infty}(\mathbb{R}^{n})}
=\sup_{-\infty<j<\infty}(2^{sj}\|\triangle_{j}
f\|_{L^{p}(\mathbb{R}^{n})}),\ \ q=\infty,$$ where
$L^{p}(\mathbb{R}^{n})$ means the usual Lebesgue space on
$\mathbb{R}^{n}$ with the norm $\|\cdot\|_{L^{p}(\mathbb{R}^{n})}.$
The homogeneous Bosev space $\dot{B}^{s}_{p,q}(\mathbb{R}^{n})$ is
defined by
$$\dot{B}^{s}_{p,q}(\mathbb{R}^{n})=\{f\in \mathcal {S}':
\|f\|_{\dot{B}^{s}_{p,q}(\mathbb{R}^{n})}<\infty\}.$$

We will use the following properties about homogeneous Besov space.

\begin{lemma} \label{le1}The following properties hold:\\
(i) If $1\leq q_{1}\leq q_{2}\leq \infty,$ $1\leq p\leq \infty$ and
$s\in \mathbb{R},$ then
$\dot{B}^{s}_{p,q_{1}}(\mathbb{R}^{n})\hookrightarrow
\dot{B}^{s}_{p,q_{2}}(\mathbb{R}^{n}).$\\
(ii) If $1\leq p_{1}\leq p_{2}\leq \infty,$ $1\leq q\leq \infty,$
$-\infty<s_{1}\leq s_{2}<\infty$ and
$s_{2}-\frac{n}{p_{2}}=s_{1}-\frac{n}{p_{2}},$ then
$$\dot{B}^{s_{2}}_{p_{2},q}(\mathbb{R}^{n})\hookrightarrow\dot{B}^{s_{1}}_{p_{1},q}(\mathbb{R}^{n}).$$
\\ (iii) If $\beta, s\in \mathbb{R},$ $1\leq p,q\leq \infty,$
then the operator $(-\triangle)^{\beta/2}$ is an isomorphism from
$\dot{B}^{s}_{p,q}(\mathbb{R}^{n})$ to
$\dot{B}^{s-\beta}_{p,q}(\mathbb{R}^{n}).$
\end{lemma}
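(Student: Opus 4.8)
The final statement is Lemma~\ref{le1}, the three standard properties of homogeneous Besov spaces. The plan is to prove each part directly from the Littlewood-Paley definition, treating the $q=\infty$ cases separately from the $q<\infty$ cases where needed, and relying only on elementary facts: Young's inequality for convolutions, the Bernstein inequalities, and the fact that each $\triangle_j$ commutes with Fourier multipliers.

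For part (i), I would argue that for fixed $s$ and $p$, the inclusion is just the nesting of $\ell^{q}$ sequence spaces applied to the sequence $(2^{sj}\|\triangle_j f\|_{L^p})_{j\in\mathbb{Z}}$. Since $\|\cdot\|_{\ell^{q_2}}\le\|\cdot\|_{\ell^{q_1}}$ whenever $q_1\le q_2$, taking the supremum or $\ell^{q_2}$-sum over $j$ is controlled by the $\ell^{q_1}$-sum, giving $\|f\|_{\dot B^{s}_{p,q_2}}\lesssim\|f\|_{\dot B^{s}_{p,q_1}}$. This handles both the finite and the $q_2=\infty$ endpoints at once. For part (ii), the key tool is the Bernstein inequality: if $\widehat{g}$ is supported in an annulus $\{|\xi|\sim 2^j\}$ and $p_1\le p_2$, then $\|g\|_{L^{p_2}}\lesssim 2^{jn(1/p_1-1/p_2)}\|g\|_{L^{p_1}}$. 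Wait — I must be careful with the direction: the scaling condition as written is $s_2-n/p_2=s_1-n/p_1$ with $p_1\le p_2$ and $s_1\le s_2$, so the embedding goes from the higher integrability/higher smoothness space into the lower one. I would apply Bernstein in the form $\|\triangle_j f\|_{L^{p_1}}\lesssim 2^{jn(1/p_1-1/p_2)}\|\triangle_j f\|_{L^{p_2}}$ (valid since $p_1\le p_2$, so $L^{p_2}$ on a frequency-localized function controls $L^{p_1}$ after a dimensional factor, using that $\widehat{\triangle_j f}$ has compact support), then multiply by $2^{s_1 j}$ and absorb the powers of $2^j$ using $s_1+n/p_1=s_2+n/p_2$ to match the $\dot B^{s_2}_{p_2,q}$ norm term by term in $j$.

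For part (iii), I would use that $(-\triangle)^{\beta/2}$ is the Fourier multiplier with symbol $|\xi|^{\beta}$, which commutes with each $\triangle_j$ up to the standard rescaling. Because $\widehat{\triangle_j f}$ lives on the annulus $|\xi|\sim 2^j$, on this region the multiplier $|\xi|^{\beta}$ is comparable to $2^{\beta j}$; more precisely, writing $\triangle_j(-\triangle)^{\beta/2}f=\psi_j\ast f$ where $\widehat{\psi_j}(\xi)=|\xi|^{\beta}\phi(2^{-j}\xi)$, a scaling argument and Young's inequality give $\|\triangle_j(-\triangle)^{\beta/2}f\|_{L^p}\sim 2^{\beta j}\|\triangle_j f\|_{L^p}$ with constants independent of $j$. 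Multiplying by $2^{(s-\beta)j}$ then identifies the $\dot B^{s-\beta}_{p,q}$ norm of $(-\triangle)^{\beta/2}f$ with the $\dot B^{s}_{p,q}$ norm of $f$; the inverse $(-\triangle)^{-\beta/2}$ supplies the two-sided bound, yielding an isomorphism.

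The main obstacle, and the only genuinely technical point, is the uniform-in-$j$ constant in the multiplier estimate of part (iii) (and, implicitly, the Bernstein inequality in part (ii)). One must verify that the convolution kernels $\psi_j$ obtained after rescaling to the unit annulus have $L^1$-norms bounded uniformly in $j$; this follows from the smoothness and compact frequency support of $\phi$, since $|\xi|^{\beta}\phi(\xi)$ is a fixed Schwartz function whose inverse transform is integrable, and the $j$-dependence is only a dilation that preserves the $L^1$-norm up to the expected power of $2^{\beta j}$. Once this uniform bound is in hand, all three parts reduce to term-by-term manipulation of the defining sequences, so I would state the rescaling lemma carefully and then let (i)--(iii) follow without further difficulty.
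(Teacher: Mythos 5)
The paper never proves this lemma: it is quoted as standard background, with the references (Bergh--L\"ofstr\"om, Triebel) supplying the proofs, so your proposal must stand on its own. Parts (i) and (iii) do: the $\ell^{q}$-nesting argument for (i) and the rescaled-multiplier argument for (iii) -- writing $\triangle_{j}(-\triangle)^{\beta/2}f$ as convolution with a kernel whose Fourier transform is $|\xi|^{\beta}\phi(2^{-j}\xi)$, noting the rescaled symbol is a fixed smooth compactly supported function away from the origin so its inverse transform is integrable, and getting two-sided bounds from $(-\triangle)^{-\beta/2}$ -- are exactly the standard proofs, and you correctly isolate the uniform-in-$j$ constant as the only technical point.

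Part (ii), however, contains a genuine error. The inequality you invoke, $\|\triangle_{j}f\|_{L^{p_{1}}}\lesssim 2^{jn(1/p_{1}-1/p_{2})}\|\triangle_{j}f\|_{L^{p_{2}}}$ for $p_{1}\le p_{2}$, is not Bernstein's inequality and is false: on $\mathbb{R}^{n}$ a lower Lebesgue norm is never controlled by a higher one, even for frequency-localized functions. Concretely, take $\psi$ Schwartz with $\widehat{\psi}$ supported in the unit ball and set $g_{R}(x)=\psi(x/R)e^{i2^{j}x_{1}}$; for $R\ge 1$ the Fourier transform of $g_{R}$ is supported in the annulus $|\xi|\sim 2^{j}$, yet $\|g_{R}\|_{L^{p_{1}}}/\|g_{R}\|_{L^{p_{2}}}=R^{n(1/p_{1}-1/p_{2})}\|\psi\|_{L^{p_{1}}}/\|\psi\|_{L^{p_{2}}}\to\infty$ as $R\to\infty$ when $p_{1}<p_{2}$. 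Bernstein only increases integrability: $\|\triangle_{j}f\|_{L^{p_{2}}}\lesssim 2^{jn(1/p_{1}-1/p_{2})}\|\triangle_{j}f\|_{L^{p_{1}}}$. What led you astray is that the lemma as printed is itself inconsistent: the scaling condition carries a typo (it should read $s_{2}-n/p_{2}=s_{1}-n/p_{1}$, as you silently corrected), and with $p_{1}\le p_{2}$ that relation forces $s_{1}\ge s_{2}$, contradicting the stated $s_{1}\le s_{2}$; moreover the displayed embedding $\dot{B}^{s_{2}}_{p_{2},q}\hookrightarrow\dot{B}^{s_{1}}_{p_{1},q}$ would pass from high to low integrability, which is false. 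The intended statement -- and the one the paper actually uses later, e.g.\ to embed $\dot{B}^{-(2\beta-1)+\frac{n}{p}}_{p,\infty}(\mathbb{R}^{n})$ into spaces with $p=\infty$ in the proof of Lemma \ref{lemma in Besov space} -- is the usual Sobolev-type embedding from smaller exponent and higher smoothness into larger exponent and lower smoothness. With the genuine Bernstein inequality the term-by-term computation you intended then works: $2^{s_{2}j}\|\triangle_{j}f\|_{L^{p_{2}}}\lesssim 2^{j(s_{2}+n/p_{1}-n/p_{2})}\|\triangle_{j}f\|_{L^{p_{1}}}=2^{s_{1}j}\|\triangle_{j}f\|_{L^{p_{1}}}$, using $s_{1}-n/p_{1}=s_{2}-n/p_{2}$. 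You should restate the hypotheses in that corrected form rather than assert the reverse inequality, and also reconcile your own sign slip: in the same paragraph you use both $s_{2}-n/p_{2}=s_{1}-n/p_{1}$ and $s_{1}+n/p_{1}=s_{2}+n/p_{2}$, which are incompatible unless $p_{1}=p_{2}$.
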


\begin{lemma}\label{le 2}
Let $0<\theta<1,$ $1\leq p,q\leq p\leq \infty,$
$-\infty<s_{1}<s_{2}<\infty$ and $s=(1-\theta)s_{1}+\theta s_{2}.$
Then $$(\dot{B}^{s_{1}}_{p,\infty}(\mathbb{R}^{n}),
\dot{B}^{s_{2}}_{p,\infty}(\mathbb{R}^{n}))_{\theta,q}=\dot{B}^{s}_{p,q}(\mathbb{R}^{n})$$
for $s=s_{1}(1-\theta)+s_{2}\theta,$ where
$(\cdot,\cdot)_{\theta,q}$ means the real interpolation functor, see
Berg and  Lofstrom \cite{Berg Lofstrom}.
\end{lemma}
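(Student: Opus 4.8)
The plan is to transport the assertion from Besov spaces to weighted vector-valued sequence spaces, using that real interpolation commutes with retractions. For $\sigma\in\mathbb{R}$ and $1\le r\le\infty$ let $\ell^{r}_{\sigma}(L^{p})$ be the space of sequences $\{g_{j}\}_{j\in\mathbb{Z}}\subset L^{p}(\mathbb{R}^{n})$ with $\|\{g_{j}\}\|_{\ell^{r}_{\sigma}(L^{p})}=\big(\sum_{j}(2^{j\sigma}\|g_{j}\|_{L^{p}})^{r}\big)^{1/r}$ finite (with the sup-modification for $r=\infty$). By the definition of $\dot B^{\sigma}_{p,r}$ in Section 2, the map $\mathcal{S}f=\{\triangle_{j}f\}_{j}$ is an isometric embedding of $\dot B^{\sigma}_{p,r}$ into $\ell^{r}_{\sigma}(L^{p})$, simultaneously for every $\sigma$ and $r$.

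First I would construct the associated retraction. Pick $\psi\in\mathcal{S}(\mathbb{R}^{n})$ whose Fourier transform is supported in $\{1/4<|\xi|<4\}$ and equals $1$ on $\{1/2\le|\xi|\le2\}$, and set $\psi_{j}(x)=2^{jn}\psi(2^{j}x)$, so that $\psi_{j}\ast\triangle_{j}f=\triangle_{j}f$. Defining $\mathcal{T}\{g_{j}\}=\sum_{j}\psi_{j}\ast g_{j}$, a routine Bernstein/Young estimate shows $\mathcal{T}$ is bounded from $\ell^{r}_{\sigma}(L^{p})$ to $\dot B^{\sigma}_{p,r}$, and $\mathcal{T}\mathcal{S}=\mathrm{Id}$. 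Thus $\mathcal{S}$ and $\mathcal{T}$ form a coretraction--retraction pair, uniformly in the two smoothness exponents, and the retraction theorem for the real method (Bergh--L\"ofstr\"om \cite{Berg Lofstrom}) gives
$$\big(\dot B^{s_{1}}_{p,\infty},\dot B^{s_{2}}_{p,\infty}\big)_{\theta,q}
=\mathcal{T}\Big[\big(\ell^{\infty}_{s_{1}}(L^{p}),\ell^{\infty}_{s_{2}}(L^{p})\big)_{\theta,q}\Big].$$

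It then remains to interpolate the sequence spaces, and this is where the hypothesis $s_{1}<s_{2}$ enters. Computing the $K$-functional of the weighted $\ell^{\infty}$ pair coordinatewise yields
$$K\big(t,\{g_{j}\}\big)\sim\sup_{j\in\mathbb{Z}}\min\big(2^{js_{1}},\,t\,2^{js_{2}}\big)\,\|g_{j}\|_{L^{p}},$$
so that for each fixed $j$ the transition between the two regimes occurs at $t_{j}=2^{-j(s_{2}-s_{1})}$. Since $s_{2}-s_{1}>0$ these break-points are geometrically separated, and inserting the formula into $\big(\int_{0}^{\infty}(t^{-\theta}K(t,\cdot))^{q}\,dt/t\big)^{1/q}$ and integrating term by term reproduces, up to equivalence, $\big(\sum_{j}2^{jqs}\|g_{j}\|_{L^{p}}^{q}\big)^{1/q}$ with $s=(1-\theta)s_{1}+\theta s_{2}$; that is, $\big(\ell^{\infty}_{s_{1}}(L^{p}),\ell^{\infty}_{s_{2}}(L^{p})\big)_{\theta,q}=\ell^{q}_{s}(L^{p})$. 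Carrying this equality back through $\mathcal{T}$ and $\mathcal{S}$ gives the claimed identity $\big(\dot B^{s_{1}}_{p,\infty},\dot B^{s_{2}}_{p,\infty}\big)_{\theta,q}=\dot B^{s}_{p,q}$.

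The step I expect to be most delicate is the boundedness of $\mathcal{T}$ on the \emph{homogeneous} sequence spaces: the series $\sum_{j}\psi_{j}\ast g_{j}$ must be interpreted as a tempered distribution modulo polynomials, so one has to check convergence and well-definedness in $\mathcal{S}'/\mathcal{P}$ rather than take it for granted. The other nontrivial point is the coordinatewise $K$-functional computation, where the separation of the break-points $t_{j}$ forced by $s_{1}\neq s_{2}$ is exactly what makes the supremum integrable and the term-by-term estimate legitimate. Both facts are standard (and the sequence-space interpolation may simply be quoted from Bergh--L\"ofstr\"om), so the essential content reduces to verifying the retraction diagram in the homogeneous setting.
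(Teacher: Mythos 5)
The paper gives no proof of this lemma at all — it is quoted as a known fact with a pointer to Bergh--L\"ofstr\"om — and your retraction argument is precisely the canonical proof in that reference: interpolation of the weighted sequence spaces $\bigl(\ell^{\infty}_{s_{1}}(L^{p}),\ell^{\infty}_{s_{2}}(L^{p})\bigr)_{\theta,q}=\ell^{q}_{s}(L^{p})$ (their Theorem 5.6.1, where $s_{1}\neq s_{2}$ is exactly the needed hypothesis), transferred to Besov spaces by the retraction theorem (their Theorems 6.4.2--6.4.5), adapted here to the homogeneous setting. Your argument is correct, including the two technical points you rightly flag (interpreting $\sum_{j}\psi_{j}\ast g_{j}$ in $\mathcal{S}'$ modulo polynomials, and the coordinatewise $K$-functional computation), so there is nothing in the paper's own text to compare it against beyond the citation it defers to.
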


We will use the $L^{p}-L^{q}-$type estimates for
$e^{-t(-\triangle)^{\theta}}$ in homogeneous Besov spaces. For
$\theta=1,$ the $L^{p}-L^{p}$-estimates for $e^{t\triangle}$ in
Besov spaces were studied by Kozono, Ogawa and Taniuchi in
\cite{Hideo Kozono Takayoshi Ogawa Yasushi Taniuchi}. Zhai  in
\cite{Zhai} proved the general  case of $\theta>0.$
\begin{lemma}\label{lemma1}
Let $\theta>0$ and $\zeta\geq0.$  If  $s_{1}\leq s_{2},$ $1\leq
p_{1}\leq p_{2}\leq \infty$ and $1\leq q\leq \infty,$ then
\begin{equation}\label{acti 1}
\|e^{-t(-\triangle)^{\theta}}f\|_{\dot{B}^{s_{2}}_{p_{2},q}(\mathbb{R}^{n})}\lesssim
t^{-\frac{s_{2}-s_{1}}{2\theta}-\frac{n}{2\theta}\left(\frac{1}{p_{1}}-\frac{1}{p_{2}}\right)}
\|f\|_{\dot{B}^{s_{1}}_{p_{1},q}(\mathbb{R}^{n})}.
\end{equation}
\end{lemma}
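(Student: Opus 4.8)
$L^{p_1}$–$L^{p_2}$-type smoothing estimate for $e^{-t(-\triangle)^{\theta}}$ in homogeneous Besov spaces, namely
$$\|e^{-t(-\triangle)^{\theta}}f\|_{\dot{B}^{s_{2}}_{p_{2},q}}\lesssim t^{-\frac{s_{2}-s_{1}}{2\theta}-\frac{n}{2\theta}(\frac{1}{p_{1}}-\frac{1}{p_{2}})}\|f\|_{\dot{B}^{s_{1}}_{p_{1},q}}.$$

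Let me think about how to prove this.

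The key point is that this is a statement about the semigroup $e^{-t(-\triangle)^{\theta}}$ and how it interacts with the Littlewood-Paley decomposition.

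The strategy:

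1. Work dyadic-block by dyadic-block. Fix $f$ and apply $\triangle_j$ to $e^{-t(-\triangle)^{\theta}}f$. Because the Fourier multiplier $e^{-t|\xi|^{2\theta}}$ and the Littlewood-Paley projector $\triangle_j$ (which localizes to $|\xi| \sim 2^j$) commute (both are Fourier multipliers), we have
$$\triangle_j e^{-t(-\triangle)^{\theta}}f = e^{-t(-\triangle)^{\theta}}\triangle_j f.$$

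2. On the frequency annulus $|\xi| \sim 2^j$ (support of $\widehat{\varphi_j}$), I need a bound on the operator $e^{-t(-\triangle)^{\theta}}$ acting on functions with Fourier support there. The standard approach is a Bernstein-type estimate combined with a kernel bound. Specifically, for a function $g$ with $\text{supp}(\widehat{g}) \subseteq \{|\xi| \sim 2^j\}$, I want
$$\|e^{-t(-\triangle)^{\theta}}g\|_{L^{p_2}} \lesssim 2^{n j(\frac{1}{p_1}-\frac{1}{p_2})} e^{-c t 2^{2\theta j}} \|g\|_{L^{p_1}}.$$

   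This combines two effects: the Bernstein embedding $L^{p_1} \to L^{p_2}$ for frequency-localized functions (giving the factor $2^{nj(1/p_1 - 1/p_2)}$), and the exponential decay coming from the multiplier $e^{-t|\xi|^{2\theta}} \sim e^{-ct 2^{2\theta j}}$ on the annulus.

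3. To get the decay factor rigorously I would write the action of the multiplier $e^{-t|\xi|^{2\theta}}$ restricted to the annulus as convolution with a kernel, rescale $\xi = 2^j \eta$ so the annulus becomes $|\eta|\sim 1$, and estimate the rescaled kernel's $L^1$ norm. After rescaling, $t|\xi|^{2\theta} = (t 2^{2\theta j})|\eta|^{2\theta}$, so I'd study the kernel of the multiplier $e^{-\tau|\eta|^{2\theta}}\chi(\eta)$ where $\tau = t 2^{2\theta j}$ and $\chi$ is a cutoff to $|\eta|\sim 1$. Its $L^1$ norm decays like $e^{-c\tau}$ because $|\eta|^{2\theta} \geq c > 0$ on the annulus; the rescaling then introduces the Bernstein factor $2^{nj(1/p_1-1/p_2)}$ via Young's inequality.

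4. Assemble the dyadic pieces. Multiply by $2^{s_2 j}$ and take the $\ell^q$ norm over $j$:
$$\|e^{-t(-\triangle)^{\theta}}f\|_{\dot B^{s_2}_{p_2,q}} = \Big(\sum_j (2^{s_2 j}\|\triangle_j e^{-t(-\triangle)^{\theta}}f\|_{L^{p_2}})^q\Big)^{1/q}.$$
   Insert the block estimate, write $2^{s_2 j} = 2^{s_1 j}\cdot 2^{(s_2-s_1)j}$, and factor out the supremum over $j$ of
$$2^{(s_2 - s_1)j + nj(\frac{1}{p_1}-\frac{1}{p_2})} e^{-ct 2^{2\theta j}}.$$

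5. **Main obstacle:** bounding that supremum by $t^{-\frac{s_2-s_1}{2\theta}-\frac{n}{2\theta}(1/p_1-1/p_2)}$. Setting $a = (s_2-s_1) + n(1/p_1-1/p_2) \geq 0$ and $x = 2^{2\theta j}$, I must show $\sup_{j} 2^{aj}e^{-ctx} = \sup_x x^{a/(2\theta)}e^{-ctx} \lesssim t^{-a/(2\theta)}$. This follows from the elementary bound $\sup_{x>0} x^{\gamma}e^{-ctx} = (\gamma/(ect))^{\gamma}$ for $\gamma = a/(2\theta) \geq 0$, which gives exactly the claimed power of $t$; the remaining $\ell^q$-summation factor is controlled against the $\dot B^{s_1}_{p_1,q}$ norm of $f$ since the residual geometric decay in $j$ is summable. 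The delicate part is really step 3 — making the exponential decay of the localized multiplier kernel uniform and quantitative — since the nonlocality of $(-\triangle)^{\theta}$ for non-integer $\theta$ means the kernel $K_t^{\theta}$ is not Gaussian and must be handled by the rescaling argument rather than an explicit formula.
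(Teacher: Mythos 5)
The paper never actually proves Lemma \ref{lemma1}: it is stated with a citation only (Kozono--Ogawa--Taniuchi \cite{Hideo Kozono Takayoshi Ogawa Yasushi Taniuchi} for $\theta=1$, and the author's own paper \cite{Zhai} for general $\theta>0$), so your argument is being compared against a reference, not an in-paper proof. Your proof is the standard one and is correct in its essentials: $\triangle_j$ commutes with the multiplier $e^{-t|\xi|^{2\theta}}$; the frequency-localized smoothing bound $\|e^{-t(-\triangle)^{\theta}}\triangle_j f\|_{L^{p_2}}\lesssim 2^{nj(\frac{1}{p_1}-\frac{1}{p_2})}e^{-ct2^{2\theta j}}\|\triangle_j f\|_{L^{p_1}}$ holds; and the conclusion follows by writing $2^{s_2j}=2^{s_1j}2^{(s_2-s_1)j}$, pulling out $\sup_{j}2^{aj}e^{-ct2^{2\theta j}}\le\sup_{x>0}x^{a/(2\theta)}e^{-ctx}\lesssim t^{-a/(2\theta)}$ with $a=(s_2-s_1)+n(\frac{1}{p_1}-\frac{1}{p_2})\ge0$, and recognizing the remaining $\ell^q$ sum as $\|f\|_{\dot B^{s_1}_{p_1,q}}$. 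Two spots need tightening, neither fatal. (i) In your step 3, insert a fattened cutoff $\tilde\varphi$ with $\tilde\varphi\equiv1$ on $\mathrm{supp}\,\widehat{\varphi_j}$, so the localized multiplier is a genuine compactly supported symbol; then either bound its kernel in $L^r$ with $1+\frac{1}{p_2}=\frac{1}{r}+\frac{1}{p_1}$ and apply Young (the rescaling gives exactly $2^{nj(1-1/r)}=2^{nj(1/p_1-1/p_2)}$), or bound it in $L^1$ and obtain the factor $2^{nj(1/p_1-1/p_2)}$ from Bernstein's inequality. As written you conflate the two: the $L^1$ norm of the kernel is scale invariant, so it cannot by itself ``introduce the Bernstein factor via Young's inequality.'' Either route works; the decay $e^{-c\tau}$ of the localized kernel (in any $L^r$) follows by integration by parts with $(1+|x|^2)^N$ weights, using that on $|\eta|\sim1$ each $\eta$-derivative of $e^{-\tau|\eta|^{2\theta}}$ costs only a polynomial factor in $\tau$, absorbed since $|\eta|^{2\theta}$ is bounded below on the annulus (this is where $\theta>0$ and the smoothness of $|\eta|^{2\theta}$ away from the origin enter). (ii) In your step 5, there is no ``residual geometric decay in $j$'' and none is needed: after factoring out the supremum, the remaining sum is exactly the $\dot B^{s_1}_{p_1,q}$ norm, via $\|(c_jd_j)_j\|_{\ell^q}\le\|(c_j)_j\|_{\ell^\infty}\|(d_j)_j\|_{\ell^q}$. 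Finally, note the hypothesis $\zeta\ge0$ in the statement is vestigial; it plays no role, and correctly plays none in your argument.
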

The following equivalent characterization of homogeneous Besov
spaces will be useful.
\begin{lemma}(\cite{H. Triebel 2})\label{equiv norm in besov}
Let $0<s<1$ and $1\leq p\leq \infty,$ then in
$\dot{B}^{s}_{p,\infty}(\mathbb{R}^{n}),$ we have
$$\|f\|_{\dot{B}^{s}_{p,\infty}(\mathbb{R}^{n})}\equiv\sup_{y\neq 0}\frac{\|f(\cdot+y)-u(\cdot)\|_{L^{p}(\mathbb{R}^{n})}}{|y|^{s}}.$$
\end{lemma}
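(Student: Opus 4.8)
The plan is to prove the two inequalities implicit in the symbol $\equiv$ separately, exploiting the Littlewood--Paley definition of $\dot{B}^{s}_{p,\infty}(\mathbb{R}^{n})$ on one side and the cancellation built into the blocks $\triangle_{j}$ on the other. Throughout, write $M:=\sup_{y\neq0}|y|^{-s}\|f(\cdot+y)-f(\cdot)\|_{L^{p}}$ for the modulus-of-continuity seminorm appearing on the right-hand side.

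First I would bound the Besov seminorm by $M$. The key structural fact is that each $\varphi_{j}$ has vanishing integral, since $\widehat{\varphi_{j}}(\xi)=\phi(2^{-j}\xi)$ is supported in the annulus $1/2<|2^{-j}\xi|\le2$, whence $\widehat{\varphi_{j}}(0)=0$. Writing $\triangle_{j}f(x)=\int_{\mathbb{R}^{n}}\varphi_{j}(y)f(x-y)\,dy$ and subtracting the vanishing term $f(x)\int\varphi_{j}$, I obtain $\triangle_{j}f(x)=\int\varphi_{j}(y)\big(f(x-y)-f(x)\big)\,dy$. Minkowski's integral inequality then gives $\|\triangle_{j}f\|_{L^{p}}\le\int|\varphi_{j}(y)|\,\|f(\cdot-y)-f(\cdot)\|_{L^{p}}\,dy\le M\int|\varphi_{j}(y)|\,|y|^{s}\,dy$. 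Since $\varphi_{j}(y)=2^{jn}\varphi_{0}(2^{j}y)$, the substitution $u=2^{j}y$ yields $\int|\varphi_{j}(y)|\,|y|^{s}\,dy=2^{-js}\int|\varphi_{0}(u)|\,|u|^{s}\,du$, and the last integral is finite because $\varphi_{0}\in\mathcal{S}(\mathbb{R}^{n})$. Hence $2^{sj}\|\triangle_{j}f\|_{L^{p}}\lesssim M$ uniformly in $j$, i.e. $\|f\|_{\dot{B}^{s}_{p,\infty}}\lesssim M$.

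For the reverse inequality I would decompose $f(\cdot+y)-f(\cdot)=\sum_{j}\big(\triangle_{j}f(\cdot+y)-\triangle_{j}f(\cdot)\big)$ and split the sum at the frequency $2^{j}\sim|y|^{-1}$. On the low-frequency part ($2^{j}\le|y|^{-1}$) I use the mean value estimate together with Bernstein's inequality $\|\nabla\triangle_{j}f\|_{L^{p}}\lesssim 2^{j}\|\triangle_{j}f\|_{L^{p}}$, giving $\|\triangle_{j}f(\cdot+y)-\triangle_{j}f(\cdot)\|_{L^{p}}\lesssim|y|2^{j}\|\triangle_{j}f\|_{L^{p}}\lesssim|y|2^{j(1-s)}\|f\|_{\dot{B}^{s}_{p,\infty}}$; on the high-frequency part ($2^{j}>|y|^{-1}$) I use only the triangle inequality, $\|\triangle_{j}f(\cdot+y)-\triangle_{j}f(\cdot)\|_{L^{p}}\le2\|\triangle_{j}f\|_{L^{p}}\lesssim2^{-sj}\|f\|_{\dot{B}^{s}_{p,\infty}}$. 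Summing the two geometric series, one in $2^{j(1-s)}$ over $2^{j}\le|y|^{-1}$ and one in $2^{-sj}$ over $2^{j}>|y|^{-1}$, each is dominated by its largest term and produces exactly the factor $|y|^{s}$, so $\|f(\cdot+y)-f(\cdot)\|_{L^{p}}\lesssim|y|^{s}\|f\|_{\dot{B}^{s}_{p,\infty}}$, that is $M\lesssim\|f\|_{\dot{B}^{s}_{p,\infty}}$.

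The delicate point is precisely the convergence of those two geometric sums, and this is where the hypothesis $0<s<1$ enters decisively: the low-frequency sum $\sum_{2^{j}\le|y|^{-1}}2^{j(1-s)}$ is controlled by its top term only because $1-s>0$, while the high-frequency sum $\sum_{2^{j}>|y|^{-1}}2^{-sj}$ requires $s>0$. Were $s\ge1$, first-order differences would no longer suffice and one would be forced to replace $f(\cdot+y)-f(\cdot)$ by higher-order differences; I would flag this explicitly to justify the stated range of $s$. The remaining care is routine: confirming that the decomposition $f=\sum_{j}\triangle_{j}f$ is valid in the relevant sense (modulo polynomials) and that all implied constants are independent of $y$ and $j$.
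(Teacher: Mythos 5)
Your proof is correct. Note, however, that the paper itself offers no proof of this lemma at all: it is quoted directly from Triebel's \emph{Theory of Function Spaces II} (the citation \cite{H. Triebel 2} in the statement), so there is no internal argument to compare against. What you have written is the standard self-contained Littlewood--Paley proof of this classical equivalence: the vanishing moment of $\varphi_{j}$ plus Minkowski's inequality for the direction $\|f\|_{\dot{B}^{s}_{p,\infty}}\lesssim M$, and the split of $\sum_{j}\bigl(\triangle_{j}f(\cdot+y)-\triangle_{j}f(\cdot)\bigr)$ at $2^{j}\sim|y|^{-1}$, with the mean value theorem and Bernstein's inequality on low frequencies and the crude triangle inequality on high frequencies, for the converse. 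Your identification of where $0<s<1$ is used (convergence of $\sum 2^{j(1-s)}$ below the cut and $\sum 2^{-js}$ above it) is exactly the right diagnostic, and your closing caveat about the decomposition $f=\sum_{j}\triangle_{j}f$ holding only modulo polynomials is the one genuine subtlety in the homogeneous setting; since both sides of the asserted equivalence are seminorms annihilating constants, and $0<s<1$ keeps the ambiguity to constants for a suitable realization, this is indeed routine. Incidentally, the ``$u(\cdot)$'' in the paper's statement is a typo for ``$f(\cdot)$'', which you silently and correctly repaired.
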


We need  a variant of Mikhlin theorem on Fourier multipliers.
\begin{lemma}\label{phi on Besov} (\cite{H. Triebel 2})
Let $-\infty<s<\infty$ and $\phi(x)$ be a complex-valued infinitely
differentiable function on $\mathbb{R}^{n}\backslash\{0\}$ so that
$$\sup_{j\leq k}\sup_{x\in\mathbb{R}^{n}}|x|^{j}|\nabla^{j}\phi(x)|<\infty$$
for a sufficiently large positive integer $k.$ Then
$$\|(\phi\widehat{u})^{\vee}\|_{\dot{B}^{s}_{p,q}(\mathbb{R}^{n})}
\lesssim\|u\|_{\dot{B}^{s}_{p,q}(\mathbb{R}^{n})}$$ for
$u\in\dot{B}^{s}_{p,q}(\mathbb{R}^{n})$ with $1\leq p,q\leq \infty.$
\end{lemma}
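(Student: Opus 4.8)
The plan is to reduce the multiplier bound to a uniform (in $j$) kernel estimate on each Littlewood--Paley block. Write $T_{\phi}u=(\phi\widehat{u})^{\vee}$, so that $\widehat{\triangle_{j}T_{\phi}u}=\widehat{\varphi_{j}}\,\phi\,\widehat{u}$. Since $\widehat{\varphi_{j}}$ is supported in the annulus $\{2^{j-1}\le|\xi|\le 2^{j+1}\}$, where $\sum_{k}\widehat{\varphi_{k}}\equiv 1$ collapses to the three neighbouring terms, one has $\widehat{\varphi_{j}}\,\widehat{u}=\widehat{\varphi_{j}}\,\widehat{(\triangle_{j-1}+\triangle_{j}+\triangle_{j+1})u}$, and therefore $\triangle_{j}T_{\phi}u=K_{j}\ast(\triangle_{j-1}+\triangle_{j}+\triangle_{j+1})u$ with $K_{j}=(\widehat{\varphi_{j}}\,\phi)^{\vee}$. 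The first step is to record this localization together with the Young inequality bound $\|\triangle_{j}T_{\phi}u\|_{L^{p}}\le\|K_{j}\|_{L^{1}}\sum_{|i|\le 1}\|\triangle_{j+i}u\|_{L^{p}}$, which is valid for every $1\le p\le\infty$, endpoints included.

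The heart of the argument, and the step I expect to be the main obstacle, is to prove that $\|K_{j}\|_{L^{1}}\le C$ uniformly in $j\in\mathbb{Z}$. For this I would rescale: writing $\widehat{\varphi_{j}}(\xi)=\widehat{\varphi_{0}}(2^{-j}\xi)$ and setting $m_{j}(\xi)=\widehat{\varphi_{0}}(\xi)\,\phi(2^{j}\xi)$, one checks $\widehat{\varphi_{j}}(\xi)\phi(\xi)=m_{j}(2^{-j}\xi)$, and the dilation behaviour of the Fourier transform gives $\|K_{j}\|_{L^{1}}=\|m_{j}^{\vee}\|_{L^{1}}$. Each $m_{j}$ is supported in the fixed annulus $\{1/2\le|\xi|\le 2\}$, and there the Mikhlin hypothesis $|x|^{l}|\nabla^{l}\phi(x)|\le C$ yields $|\nabla^{l}[\phi(2^{j}\cdot)](\xi)|=2^{jl}|(\nabla^{l}\phi)(2^{j}\xi)|\lesssim 2^{jl}|2^{j}\xi|^{-l}\lesssim 1$ for $l\le k$, uniformly in $j$. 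By the Leibniz rule the derivatives of $m_{j}$ up to order $k$ are then bounded independently of $j$; since the $m_{j}$ all live in one compact set, choosing $k>n$ gives the decay $|m_{j}^{\vee}(x)|\lesssim(1+|x|)^{-k}$ with a uniform constant, whence $\|m_{j}^{\vee}\|_{L^{1}}\le C$. This is the only place where the Mikhlin condition and the ``sufficiently large'' integer $k$ are used.

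With $\|K_{j}\|_{L^{1}}\le C$ in hand, the conclusion is routine. Multiplying the blockwise estimate by $2^{sj}$ and taking the $\ell^{q}$ norm in $j$ gives $\|T_{\phi}u\|_{\dot{B}^{s}_{p,q}}\le C\sum_{|i|\le 1}\bigl\|\,2^{sj}\|\triangle_{j+i}u\|_{L^{p}}\bigr\|_{\ell^{q}}$, and a shift of the summation index $j\mapsto j+i$ turns each term on the right into $2^{-si}\|u\|_{\dot{B}^{s}_{p,q}}$, so that $\|T_{\phi}u\|_{\dot{B}^{s}_{p,q}}\lesssim\|u\|_{\dot{B}^{s}_{p,q}}$. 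Note that the scheme is insensitive to $p$ and $q$: the Young step works at the endpoints $p\in\{1,\infty\}$ and the summation step at $q\in\{1,\infty\}$, which is exactly why the estimate holds over the full range $1\le p,q\le\infty$.
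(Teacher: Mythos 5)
Your proof is correct. The paper gives no proof of this lemma at all---it is quoted directly from Triebel's book---and your argument (uniform $L^{1}$ bounds for the rescaled block kernels $(\widehat{\varphi_{j}}\,\phi)^{\vee}$, then Young's inequality on each Littlewood--Paley piece and the shifted $\ell^{q}$ summation) is precisely the standard multiplier argument that underlies the cited result, so there is nothing in the paper to compare it against beyond noting that your write-up supplies the proof the paper only references.
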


We need a useful lemma, see for example,  Grafakos \cite{Grafakos},
Frazier, Jawerth and Weiss \cite{Frazier Jawerth Weiss}.
\begin{lemma}\label{carleson measure BMO}
Let $f\in \mathcal {S}'(\mathbb{R}^{n}).$ Then the following
statements are equivalent:\\
 (i) $f\in BMO(\mathbb{R}^{n});$\\
(ii) for all $\phi\in\mathcal{S}'(\mathbb{R}^{n})$ satisfying:
$$\int_{\mathbb{R}^{n}}\phi(x)dx=0,\ \ \ \ \ \ \sup_{\xi\in\mathbb{R}^{n}}
\int_{0}^{\infty}|\widehat{\phi}(t\xi)|^{2}\frac{dtd\xi}{t}<\infty$$
and $ |\phi(x)|\lesssim\frac{1}{(1+|x|)^{n+c}}$  for some $c,$
 then
the measure $$d\mu(t,x)=|\phi_{t}\ast b(x)|^{2}\frac{dtdx}{t}$$ is a
Carleson measure on $\mathbb{R}^{1+n}_{+}.$
\end{lemma}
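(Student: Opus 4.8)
The plan is to prove the two implications separately through the standard square-function/Carleson-measure dictionary, writing $S_{\phi}f(x)=\bigl(\int_{0}^{\infty}\abs{\phi_{t}\ast f(x)}^{2}\,t^{-1}dt\bigr)^{1/2}$ for the Littlewood--Paley square function attached to $\phi$. The one analytic input I would use throughout is that the normalization $\sup_{\xi}\int_{0}^{\infty}\abs{\widehat{\phi}(t\xi)}^{2}\,t^{-1}dt<\infty$ together with Plancherel's theorem yields the $L^{2}$ bound $\|S_{\phi}h\|_{L^{2}(\mathbb{R}^{n})}\lesssim\|h\|_{L^{2}(\mathbb{R}^{n})}$.

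For the implication (i)$\Rightarrow$(ii) I would fix $f\in BMO(\mathbb{R}^{n})$ and an arbitrary admissible $\phi$, and bound the averaged integral over a ball $B=B(x_{0},r)$. Since $\phi$ has mean zero, subtracting the average $f_{B}$ is harmless, so I would split $f-f_{B}=g+h$ with $g=(f-f_{B})\chi_{2B}$ the local part and $h=(f-f_{B})\chi_{(2B)^{c}}$ the tail. For the local part the $L^{2}$ bound on $S_{\phi}$ together with the John--Nirenberg inequality gives $\int_{0}^{\infty}\int_{\mathbb{R}^{n}}\abs{\phi_{t}\ast g}^{2}\,t^{-1}dx\,dt=\|S_{\phi}g\|_{L^{2}}^{2}\lesssim\int_{2B}\abs{f-f_{B}}^{2}\lesssim\abs{B}\,\|f\|_{BMO}^{2}$. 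For the tail, when $x\in B$ and $0<t<r$ I would use the decay hypothesis $\abs{\phi(y)}\lesssim(1+\abs{y})^{-(n+c)}$ and the mean-zero cancellation to dominate $\abs{\phi_{t}\ast h(x)}$ by a sum over dyadic annuli, each term being of size $(t\,2^{-k}r^{-1})^{c}$ times the oscillation of $f$ on $2^{k+1}B$; the growth estimate $\abs{f_{2^{k}B}-f_{B}}\lesssim k\,\|f\|_{BMO}$ makes the series geometrically convergent, and squaring and integrating $t^{c-1}$ over $(0,r)$ closes the Carleson bound.

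For the converse (ii)$\Rightarrow$(i), since the hypothesis furnishes the Carleson property for \emph{every} admissible $\phi$, it suffices to exploit a single conveniently chosen one, and here I would take $\phi$ radial with $\widehat{\phi}$ smooth, supported in an annulus, and normalized so that $\int_{0}^{\infty}\abs{\widehat{\phi}(t\xi)}^{2}\,t^{-1}dt=1$ for all $\xi\neq0$. This normalization supplies the Calder\'on reproducing identity $g=\int_{0}^{\infty}\phi_{t}\ast\phi_{t}\ast g\,t^{-1}dt$ for test functions $g$, so that pairing $f$ against such $g$ gives $\langle f,g\rangle=\int_{0}^{\infty}\int_{\mathbb{R}^{n}}(\phi_{t}\ast f)\,\overline{(\phi_{t}\ast g)}\,t^{-1}dx\,dt$. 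I would then apply the tent-space duality $\abs{\langle F,G\rangle}\lesssim\|F\|_{T^{\infty}_{2}}\|G\|_{T^{1}_{2}}$ with $F=\phi_{t}\ast f$ and $G=\phi_{t}\ast g$, and combine it with the square-function characterization $\|\phi_{t}\ast g\|_{T^{1}_{2}}\sim\|g\|_{H^{1}}$ of the real Hardy space to obtain $\abs{\langle f,g\rangle}\lesssim\|\mu\|_{\mathrm{Carleson}}^{1/2}\,\|g\|_{H^{1}(\mathbb{R}^{n})}$. As such $g$ are dense in $H^{1}(\mathbb{R}^{n})$ and $BMO(\mathbb{R}^{n})=(H^{1}(\mathbb{R}^{n}))^{*}$ by the Fefferman--Stein duality theorem, this identifies $f$ with a $BMO$ function satisfying $\|f\|_{BMO}\lesssim\|\mu\|_{\mathrm{Carleson}}^{1/2}$.

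The hard part I expect is the converse: one must justify the Calder\'on reproducing formula at the level of tempered distributions modulo polynomials (the natural ambient space for $BMO$) and pair the tent-space duality correctly against the $H^{1}$ square-function norm. A point worth flagging is that the stated Tauberian condition is only an \emph{upper} bound on $\int_{0}^{\infty}\abs{\widehat{\phi}(t\xi)}^{2}t^{-1}dt$, which by itself does not allow inverting the square function; this is precisely why I would prove the converse by \emph{selecting} a normalized $\phi$ that realizes the reproducing identity rather than by working with an arbitrary admissible one.
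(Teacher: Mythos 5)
The paper contains no proof of this lemma to compare against: it is invoked as a known result with a pointer to Grafakos and to Frazier--Jawerth--Weiss, and your argument is precisely the standard proof found in those references. It is essentially correct in both directions. For (i)$\Rightarrow$(ii), the splitting $f-f_{B}=g+h$ after subtracting $f_{B}$ (licensed by $\int\phi=0$), the Plancherel/Fubini bound $\|S_{\phi}g\|_{L^{2}}\lesssim\|g\|_{L^{2}}$ from the Tauberian upper bound, John--Nirenberg for the local part, and the dyadic-annulus estimate with $|f_{2^{k}B}-f_{B}|\lesssim k\|f\|_{BMO}$ for the tail all work as you describe; two trivial repairs: after squaring, the $t$-integrand is $t^{2c-1}$, not $t^{c-1}$, and one must read ``for some $c$'' as $c>0$ so that $\phi\in L^{1}$ and $\phi_{t}\ast f$ converges absolutely for $f\in BMO$. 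For the converse, your key logical observation is exactly right and worth stressing: the Tauberian hypothesis is only an upper bound and cannot be inverted for an arbitrary admissible $\phi$, so (ii)$\Rightarrow$(i) must proceed by \emph{selecting} one nondegenerate $\phi$ (radial, $\widehat{\phi}$ supported in an annulus, normalized so $\int_{0}^{\infty}|\widehat{\phi}(t\xi)|^{2}\,t^{-1}dt=1$), which the universal quantifier in (ii) permits; Calder\'on reproducing plus the $T^{\infty}_{2}\times T^{1}_{2}$ tent-space duality of Coifman--Meyer--Stein plus Fefferman--Stein $H^{1}$--$BMO$ duality then closes the argument. The one substantive caution, which you flag yourself, is that the reproducing identity and the duality pairing identify $f$ with a $BMO$ function only modulo polynomials (the natural ambient space for the square function), and a complete write-up should verify the pairing identity for $g$ in a dense subclass of $H^{1}$ (say, Schwartz functions with $\widehat{g}$ vanishing near the origin). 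Incidentally, the statement as printed contains typos ($\phi\in\mathcal{S}'$ should be a suitable test class, $b$ should be $f$, and there is a spurious $d\xi$ in the Tauberian condition) that your reading silently and correctly repairs.
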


\begin{lemma}\label{lemma in Besov space} Let $2\beta-1<w<2\beta,$ $2\leq n\leq p\leq \infty,$ $1\leq q\leq
\infty,$ then we have
\begin{eqnarray*}
t^{\frac{2\beta-1}{2\beta}}\|u(t)\|_{L^{\infty}(\mathbb{R}^{n})}
+t^{\frac{w-(2\beta-1)}{2\beta}}\|u(t)\|_{\dot{B}^{w-2\beta+\frac{n}{p}}_{p,\infty}(\mathbb{R}^{n})}\\
\lesssim
\|u(t)\|_{\dot{B}^{-(2\beta-1)+\frac{n}{p}}_{p,\infty}(\mathbb{R}^{n})}
+t^{\frac{w}{2\beta}}\|u(t)\|_{\dot{B}^{w-(2\beta-1)+\frac{n}{p}}_{p,\infty}(\mathbb{R}^{n})}.\end{eqnarray*}
\end{lemma}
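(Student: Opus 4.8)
The plan is to regard $t$ as fixed and prove a purely functional comparison among the four quantities evaluated at the single function $v:=u(t)$, the powers of $t$ being produced at the end by a $t$-dependent choice of a dyadic cut-off frequency. I would write $v=\sum_{j}\triangle_{j}v$ and abbreviate $s_{A}=-(2\beta-1)+\tfrac{n}{p}$, $s_{2}=w-2\beta+\tfrac{n}{p}$ and $s_{B}=w-(2\beta-1)+\tfrac{n}{p}$, so that $s_{B}-s_{A}=w$ and $s_{2}=s_{A}+(w-1)$. The two terms on the right-hand side are $\|v\|_{\dot{B}^{s_{A}}_{p,\infty}}$ and $t^{w/(2\beta)}\|v\|_{\dot{B}^{s_{B}}_{p,\infty}}$, and the guiding idea is that the low Littlewood--Paley frequencies of $v$ are controlled by the first and the high frequencies by the second, the break occurring at $2^{j_{0}}\sim t^{-1/(2\beta)}$.

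For the $L^{\infty}$ term I would first apply Bernstein's inequality blockwise, $\|\triangle_{j}v\|_{L^{\infty}}\lesssim 2^{jn/p}\|\triangle_{j}v\|_{L^{p}}$, and then bound $\|\triangle_{j}v\|_{L^{p}}$ by $2^{-js_{A}}\|v\|_{\dot{B}^{s_{A}}_{p,\infty}}$ for $j\le j_{0}$ and by $2^{-js_{B}}\|v\|_{\dot{B}^{s_{B}}_{p,\infty}}$ for $j>j_{0}$. The two resulting geometric series converge precisely because the hypotheses give $\tfrac{n}{p}-s_{A}=2\beta-1>0$ and $\tfrac{n}{p}-s_{B}=(2\beta-1)-w<0$; summing them and inserting $2^{j_{0}}\sim t^{-1/(2\beta)}$ yields $\|v\|_{L^{\infty}}\lesssim t^{-(2\beta-1)/(2\beta)}\|v\|_{\dot{B}^{s_{A}}_{p,\infty}}+t^{(w-(2\beta-1))/(2\beta)}\|v\|_{\dot{B}^{s_{B}}_{p,\infty}}$, and multiplying by $t^{(2\beta-1)/(2\beta)}$ turns the second weight into $t^{w/(2\beta)}$, producing exactly the right-hand side. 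This part uses the full force of the constraint $2\beta-1<w<2\beta$ to fix the signs of the exponents.

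For the intermediate term $t^{(w-(2\beta-1))/(2\beta)}\|v\|_{\dot{B}^{s_{2}}_{p,\infty}}$ the cleanest route is the real interpolation identity of Lemma \ref{le 2}: writing $s_{2}=(1-\theta)s_{A}+\theta s_{B}$ forces $\theta=(w-1)/w$, and the inclusion $\dot{B}^{s_{A}}_{p,\infty}\cap\dot{B}^{s_{B}}_{p,\infty}\hookrightarrow\dot{B}^{s_{2}}_{p,\infty}$ together with Young's inequality should again convert the interpolated bound into the additive form $\|v\|_{\dot{B}^{s_{A}}_{p,\infty}}+t^{w/(2\beta)}\|v\|_{\dot{B}^{s_{B}}_{p,\infty}}$ once the same cut-off is inserted; alternatively one repeats the dyadic splitting, now taking a supremum in $j$ instead of a sum. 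The step I expect to be the main obstacle, and the one that really constrains the admissible parameters, is the control of the \emph{low} frequencies of this intermediate norm: since $\dot{B}^{s_{2}}_{p,\infty}$ weights low frequencies the more heavily the smaller $s_{2}$ is, the estimate can only survive when $s_{2}$ lies between $s_{A}$ and $s_{B}$, equivalently $\theta\in[0,1]$. Verifying that the hypotheses $2\beta-1<w<2\beta$ and $2\le n\le p$ keep $\theta$ in this range, and then checking that the residual powers of $t$ created by the cut-off combine to no worse than the stated weight, is where I expect the genuine work of the proof to concentrate.
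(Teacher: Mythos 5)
Your treatment of the $L^{\infty}$ term is correct and complete, and it follows a genuinely different route from the paper: the paper deduces that term from the real interpolation identity $\dot{B}^{0}_{\infty,1}=\bigl(\dot{B}^{-(2\beta-1)}_{\infty,\infty},\dot{B}^{w-(2\beta-1)}_{\infty,\infty}\bigr)_{\frac{2\beta-1}{w},1}$ of Lemma \ref{le 2}, the embedding of the corresponding $\dot{B}^{\cdot+n/p}_{p,\infty}$ couple into that couple, and the substitution $x\mapsto t^{1/(2\beta)}x$, whereas you work directly with Littlewood--Paley blocks, Bernstein's inequality and the cut-off $2^{j_{0}}\sim t^{-1/(2\beta)}$. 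Your sign checks $\tfrac{n}{p}-s_{A}=2\beta-1>0$ and $\tfrac{n}{p}-s_{B}=2\beta-1-w<0$ are exactly what makes the two geometric sums converge, and that half of the lemma comes out right.

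The gap sits in the intermediate term, at precisely the two verifications you postponed, and neither goes through --- not because your method is wrong, but because the statement as printed is defective there, and the paper's own proof shares the same defects. First, the hypotheses do \emph{not} keep $\theta=(w-1)/w$ in $[0,1]$: since $\beta\in(1/2,1)$, the condition $2\beta-1<w<2\beta$ permits $w<1$ (take $\beta=3/5$, $w=1/2$), in which case $s_{2}<s_{A}<s_{B}$ and the low frequencies are uncontrollable; testing the inequality at $t=1$ on a single block of frequency $2^{j}$ reduces it to $2^{j(w-1)}\lesssim 1+2^{jw}$, which fails as $j\to-\infty$. (The paper's identity $\dot{B}^{w-2\beta}_{p,\infty}=\bigl(\dot{B}^{-(2\beta-1)}_{p,\infty},\dot{B}^{w-(2\beta-1)}_{p,\infty}\bigr)_{\frac{w-1}{w},\infty}$ likewise only makes sense for $w>1$.) Second, the residual powers of $t$ do not combine to the stated weight: on the low-frequency range your supremum is $\sim 2^{j_{0}(w-1)}=t^{-(w-1)/(2\beta)}$, so the dyadic argument yields, for $w\geq1$,
\[
t^{\frac{w-1}{2\beta}}\|v\|_{\dot{B}^{s_{2}}_{p,\infty}}\lesssim\|v\|_{\dot{B}^{s_{A}}_{p,\infty}}+t^{\frac{w}{2\beta}}\|v\|_{\dot{B}^{s_{B}}_{p,\infty}},
\]
with $t^{(w-1)/(2\beta)}$ in place of the stated $t^{(w-(2\beta-1))/(2\beta)}$; these differ because $\beta<1$. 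Moreover no argument can recover the stated weight: testing on a single block at frequency $2^{j}=t^{-1/(2\beta)}$, the stated inequality reduces to $t^{(2-2\beta)/(2\beta)}\lesssim1$, which fails as $t\to\infty$. The paper's concluding change of variables in fact also produces $t^{(w-2\beta)/(2\beta)}\cdot t^{(2\beta-1)/(2\beta)}=t^{(w-1)/(2\beta)}$, its stated exponent being an arithmetic slip. So your plan, carried out, proves exactly the corrected lemma (weight $t^{(w-1)/(2\beta)}$, extra hypothesis $w\geq1$), which is what the paper's interpolation-plus-scaling argument actually establishes; but the deferred steps, as you yourself suspected, are where the statement breaks, and they cannot be completed as stated.
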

\begin{proof} It follows from Lemmas \ref{le1}-\ref{le 2} and \cite[Proposition 2.5.7]{H. Triebel
2} that
$$\dot{B}^{w-2\beta}_{p,\infty}(\mathbb{R}^{n})
=\left(\dot{B}^{-(2\beta-1)}_{p,\infty}(\mathbb{R}^{n}),
\dot{B}^{w-(2\beta-1)}_{p,\infty}(\mathbb{R}^{n})\right)_{\frac{w-1}{w},\infty}$$
and
\begin{eqnarray*}
\dot{B}^{0}_{\infty,\infty}(\mathbb{R}^{n})\supset
L^{\infty}(\mathbb{R}^{n})\supset\dot{B}^{0}_{\infty,1}(\mathbb{R}^{n})
=\left(\dot{B}^{-(2\beta-1)}_{\infty,\infty}(\mathbb{R}^{n}),
\dot{B}^{w-(2\beta-1)}_{\infty,\infty}(\mathbb{R}^{n})\right)_{\frac{2\beta-1}{w},1}
\end{eqnarray*}
which contains
$$\left(\dot{B}^{-(2\beta-1)+\frac{n}{p}}_{p,\infty}(\mathbb{R}^{n}),
\dot{B}^{w-(2\beta-1)+\frac{n}{p}}_{p,\infty}(\mathbb{R}^{n})\right)_{\frac{2\beta-1}{w},1}.
$$
Hence, we can get
\begin{eqnarray*}
&&\|u(t,t^{\frac{1}{2\beta}}\cdot)\|_{L^{\infty}(\mathbb{R}^{n})}
+\|u(t,t^{\frac{1}{2\beta}},\cdot)\|_{\dot{B}^{w-2\beta+\frac{n}{p}}_{p,\infty}(\mathbb{R}^{n})}\\
&\lesssim&\|u(t,t^{\frac{1}{2\beta}},\cdot)\|_{\dot{B}^{-(2\beta-1)+\frac{n}{p}}_{p,\infty}(\mathbb{R}^{n})}
+\|u(t,t^{\frac{1}{2\beta}}\cdot)\|_{\dot{B}^{w-(2\beta-1)+\frac{n}{p}}_{p,\infty}(\mathbb{R}^{n})}.
\end{eqnarray*}
By changing variables, we can find that
\begin{eqnarray*}
t^{\frac{2\beta-1}{2\beta}}\|u(t)\|_{L^{\infty}(\mathbb{R}^{n})}+
t^{\frac{w-(2\beta-1)}{2\beta}}\|u(t)\|_{\dot{B}^{w-2\beta+\frac{n}{p}}_{p,\infty}(\mathbb{R}^{n})}\\
\lesssim\|u(t)\|_{\dot{B}^{-(2\beta-1)+\frac{n}{p}}_{p,\infty}(\mathbb{R}^{n})}
+t^{\frac{w}{2\beta}}\|u(t)\|_{\dot{B}^{w-(2\beta-1)+\frac{n}{p}}_{p,\infty}(\mathbb{R}^{n})}.
\end{eqnarray*}
\end{proof}
\begin{lemma}\label{product}
For $\beta\in (1/2,1),$ $u,v\in (L^{\infty}(\mathbb{R}^{n}))^{n}\cap
(G_{n}^{-(2\beta-1)}(\mathbb{R}^{n}))^{n},$ then we have
$$\|e^{-t(-\triangle)^{\beta}}P\nabla \cdot(u\otimes v)\|_{G_{n}^{-(2\beta-1)}(\mathbb{R}^{n})}
\lesssim
t^{\frac{1-2\beta}{2\beta}}\|u\|_{L^{\infty}(\mathbb{R}^{n})}\|v\|_{G_{n}^{-(2\beta-1)}(\mathbb{R}^{n})}.$$
\end{lemma}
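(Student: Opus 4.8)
The plan is to reduce everything to a single convolution kernel and then exploit the positivity of the fractional heat semigroup to pass the modulus onto $v$. Write $w:=e^{-t(-\triangle)^{\beta}}P\nabla\cdot(u\otimes v)=\Psi_{t}\ast(u\otimes v)$, where $\Psi_{t}$ denotes the (matrix-valued) kernel of the operator $e^{-t(-\triangle)^{\beta}}P\nabla\cdot$; its symbol is a sum of terms $\bigl(\delta_{jk}+\tfrac{\xi_{j}\xi_{k}}{|\xi|^{2}}\bigr)(i\xi_{m})e^{-t|\xi|^{2\beta}}$. The whole argument rests on two facts: a sharp bound $\|\Psi_{t}\|_{L^{1}}\lesssim t^{-1/(2\beta)}$, and the fact that for $\beta\in(0,1]$ the kernel $K_{\tau}^{\beta}$ of $e^{-\tau(-\triangle)^{\beta}}$ is nonnegative (it is the density of a $2\beta$-stable process).

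First I would establish the kernel estimate. By the scaling $\Psi_{t}(x)=t^{-(n+1)/(2\beta)}\Psi_{1}(t^{-1/(2\beta)}x)$ it suffices to treat $t=1$, and then $\|\Psi_{t}\|_{L^{1}}=t^{-1/(2\beta)}\|\Psi_{1}\|_{L^{1}}$. The part coming from $\delta_{jk}$ has symbol $i\xi_{m}e^{-|\xi|^{2\beta}}$, which is smooth and rapidly decreasing, so its kernel is Schwartz. The delicate piece is the Riesz part, whose symbol $\frac{\xi_{j}\xi_{k}}{|\xi|^{2}}i\xi_{m}e^{-|\xi|^{2\beta}}$ is $C^{\infty}$ away from the origin and rapidly decaying there, but at $\xi=0$ it carries a homogeneous-degree-one singularity (the factor $\xi_{j}\xi_{k}/|\xi|^{2}$ is only bounded, not smooth). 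An integration-by-parts and Mikhlin-type analysis then yields only the limited decay $|\Psi_{1}(x)|\lesssim(1+|x|)^{-(n+1)}$, whence $|\Psi_{t}(x)|\lesssim t^{-(n+1)/(2\beta)}(1+t^{-1/(2\beta)}|x|)^{-(n+1)}$, and since $n+1>n$ this gives $\Psi_{1}\in L^{1}$. \emph{This is the main obstacle}: the Riesz transforms destroy the fast decay, so $\Psi_{t}$ cannot be dominated pointwise by a multiple of $K_{ct}^{\beta}$ (whose tail is $(n+2\beta)$ faster); only the integrability $\Psi_{t}\in L^{1}$, with the scaling $t^{-1/(2\beta)}$ coming from the single derivative in $\nabla\cdot$, survives.

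With the kernel bound in hand the assembly is routine. From $|u\otimes v|\le\|u\|_{L^{\infty}}|v|$ I obtain the pointwise bound $|w|\le\|u\|_{L^{\infty}}\,|\Psi_{t}|\ast|v|$. Applying $e^{-\tau(-\triangle)^{\beta}}$ and using $K_{\tau}^{\beta}\ge0$ together with the commutativity of convolution,
\[
e^{-\tau(-\triangle)^{\beta}}|w|\le\|u\|_{L^{\infty}}\,K_{\tau}^{\beta}\ast|\Psi_{t}|\ast|v|=\|u\|_{L^{\infty}}\,|\Psi_{t}|\ast\bigl(e^{-\tau(-\triangle)^{\beta}}|v|\bigr).
\]
Young's inequality and the definition of the $G$-norm give $\|e^{-\tau(-\triangle)^{\beta}}|v|\|_{L^{\infty}}\le\tau^{-(2\beta-1)/(2\beta)}\|v\|_{G_{n}^{-(2\beta-1)}}$, so that
\[
\|e^{-\tau(-\triangle)^{\beta}}|w|\|_{L^{\infty}}\lesssim\|u\|_{L^{\infty}}\,\|\Psi_{t}\|_{L^{1}}\,\tau^{-(2\beta-1)/(2\beta)}\|v\|_{G_{n}^{-(2\beta-1)}}.
\]
Multiplying by $\tau^{(2\beta-1)/(2\beta)}$, taking the supremum over $\tau>0$, and inserting the kernel bound $\|\Psi_{t}\|_{L^{1}}\lesssim t^{-1/(2\beta)}$ produces the asserted control of $\|w\|_{G_{n}^{-(2\beta-1)}}$ by $\|u\|_{L^{\infty}}\|v\|_{G_{n}^{-(2\beta-1)}}$, the power of $t$ being dictated entirely by the scaling of $\|\Psi_{t}\|_{L^{1}}$, i.e. by the single derivative in $\nabla\cdot$. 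The positivity of $K_{\tau}^{\beta}$ is what makes this work: it is precisely what lets the outer modulus in the $G$-norm descend onto $v$, turning the bilinear quantity into an honest $L^{1}\!-\!L^{\infty}$ estimate against $\|v\|_{G_{n}^{-(2\beta-1)}}$.
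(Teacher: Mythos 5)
Your proof is correct and takes essentially the same route as the paper's: the paper likewise reduces to the componentwise kernel bound $\|(\delta_{ij}-\partial_{x_i}\partial_{x_j}\triangle^{-1})\partial_{x_k}K_t^{\beta}\|_{L^1(\mathbb{R}^n)}\lesssim t^{-1/(2\beta)}$, passes the modulus onto $|v|$ using positivity of $K_s^{\beta}$ together with commutativity of convolution, and concludes via Young's inequality and the supremum defining the $G_n^{-(2\beta-1)}$-norm. The only differences are cosmetic: you supply the $(1+|x|)^{-(n+1)}$ decay justifying the $L^1$ bound (which the paper asserts as ``easy to see''), and both your argument and the paper's actually yield the exponent $t^{-1/(2\beta)}$ rather than the stated $t^{(1-2\beta)/(2\beta)}$, an apparent typo in the lemma's statement.
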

\begin{proof}
It is easy to see that for $\beta\in (1/2,1),$
$$\|\partial_{x_{i}}\partial_{x_{j}}\triangle^{-1}\partial_{x_{k}}K_{t}^{\beta}(x)\|_{L^{1}(\mathbb{R}^{n})}
\lesssim t^{-\frac{1}{2\beta}} (i,j,k =1,2,\cdots, n).$$ Since the
operation with respect to the convolution is commutative,  by
letting
$$K_{i,j,k,t}=(\delta_{ij}-\partial_{x_{i}}\partial_{x_{j}}\triangle^{-1})\partial_{x_{k}}K_{t}^{\beta}(x),$$
we have, for $s>0$
\begin{eqnarray*}
&&\|e^{-s(-\triangle)^{\beta}}|e^{-t(-\triangle)^{\beta}}P\nabla
\cdot(u\otimes v)|\|_{L^{\infty}(\mathbb{R}^{n})}\\
&\leq&\sum_{i=1}^{n}\sum_{j=1}^{n}\sum_{k=1}^{n}\|K_{s}^{\beta}\ast|K_{i,j,k,t}\ast(u_{k}v_{j})|\|_{L^{\infty}(\mathbb{R}^{n})}\\
&\leq&\sum_{i=1}^{n}\sum_{j=1}^{n}\sum_{k=1}^{n}\|K_{s}^{\beta}\ast|K_{i,j,k,t}|
\ast|u_{k}v_{j}|\|_{L^{\infty}(\mathbb{R}^{n})}\\
&\leq&\sum_{i=1}^{n}\sum_{j=1}^{n}\sum_{k=1}^{n}\||K_{i,j,k,t}\ast
K_{s}^{\beta}
\ast|u_{k}v_{j}|\|_{L^{\infty}(\mathbb{R}^{n})}\\
&\leq&\sum_{i=1}^{n}\sum_{j=1}^{n}\sum_{k=1}^{n}\|K_{i,j,k,t}\|_{L^{1}(\mathbb{R}^{n})}
\|K_{s}^{\beta}
\ast|u_{k}v_{j}|\|_{L^{\infty}(\mathbb{R}^{n})}\\
&\leq&\left(\|\nabla K_{t}^{\beta}\|_{L^{1}(\mathbb{R}^{n})}+
\sum_{i=1}^{n}\sum_{j=1}^{n}\sum_{k=1}^{n}\|\partial_{x_{i}}
\partial_{x_{j}}\triangle^{-1}\partial_{x_{k}}K_{t}^{\beta}\|_{L^{1}(\mathbb{R}^{n})}\right)
\|e^{-s(-\triangle)^{\beta}}|u_{k}v_{j}|\|_{L^{\infty}(\mathbb{R}^{n})}\\
&\lesssim&
t^{-\frac{1}{2\beta}}\|u\|_{L^{\infty}(\mathbb{R}^{n})}\|e^{-s(-\triangle)^{\beta}}|v|\|_{L^{\infty}(\mathbb{R}^{n})}.
\end{eqnarray*}
Thus, we get
\begin{eqnarray*}
&&\sup_{s>}s^{\frac{2\beta-1}{2\beta}}\|e^{-s(-\triangle)}
|e^{-t(-\triangle)^{\beta}}P\nabla \cdot(u\otimes
v)|\|_{L^{\infty}(\mathbb{R}^{n})}\\
&\lesssim&
t^{-\frac{1}{2\beta}}\|u\|_{L^{\infty}(\mathbb{R}^{n})}\sup_{s>0}s^{\frac{2\beta-1}{2\beta}}
\|e^{-s(-\triangle)^{\beta}}|v|\|_{L^{\infty}(\mathbb{R}^{n})}
\end{eqnarray*} and finishes the proof.
\end{proof}

\section{Proof of Proposition \ref{lemma for priori esti}}
 It follows from Lemma \ref{phi on Besov} that
\begin{equation}\label{est in priori esti}
\|Pv\|_{\dot{B}^{s}_{p,q}(\mathbb{R}^{n})}+\|\nabla(-\triangle)^{-1/2}v\|_{\dot{B}^{s}_{p,q}(\mathbb{R}^{n})}
\lesssim\|v\|_{\dot{B}^{s}_{p,q}(\mathbb{R}^{n})}.
\end{equation}
On the other hand, it is easy to see that for $k\geq 0,$
$$\|\nabla^{k}e^{-t(-\triangle)^{\beta}}v\|_{L^{p}(\mathbb{R}^{n})}
\lesssim t^{-\frac{k}{2\beta}}\|v\|_{L^{p}(\mathbb{R}^{n})}.$$ Then
(iii) of Lemma \ref{le1}  tells us
\begin{eqnarray*}
&&\|u(t)-e^{-t(-\triangle)^{\beta}}a\|_{\dot{B}^{w-(2\beta-1)+\frac{n}{p}}_{p,\infty}(\mathbb{R}^{n})}\\
&\lesssim&\sup_{\tau>0}
\tau^{\frac{2\beta-\left[w-(2\beta-1)+\frac{n}{p}\right]}{2\beta}}
\|\triangle^{\beta+1}
\int_{0}^{t}e^{-(t-s+\tau)(-\triangle)^{\beta}}\triangle^{-1}
P\nabla\cdot
f(s)ds\|_{L^{p}(\mathbb{R}^{n})}\\
&\lesssim&\sup_{\tau>0}
\tau^{\frac{2\beta-\left[w-(2\beta-1)+\frac{n}{p}\right]}{2\beta}}
\int_{0}^{t}(t+\tau-s)^{-\frac{2\beta}{2\beta}}\|\triangle
e^{-\frac{(t+\tau-s)}{2}(-\triangle)^{\beta}}\triangle^{-1}
P\nabla\cdot f(s)\|_{L^{p}(\mathbb{R}^{n})}ds\\
&\lesssim&\sup_{\tau>0}
\tau^{\frac{2\beta-\left[w-(2\beta-1)+\frac{n}{p}\right]}{2\beta}}
\int_{0}^{t}(t+\tau-s)^{\frac{-4\beta+[w-(2\beta-1)+\frac{n}{p}]}{2\beta}}
\|\triangle^{-1}
P\nabla\cdot f(s)\|_{\dot{B}^{w-(2\beta-1)+\frac{n}{p}}_{p,\infty}(\mathbb{R}^{n})}ds\\
%&\lesssim&\sup_{\tau>0}
%\tau^{\frac{2\beta-\left[w-(2\beta-1)+\frac{n}{p}\right]}{2\beta}}
%\int_{0}^{t}(t+\tau-s)^{\frac{-4\beta+[w-(2\beta-1)+\frac{n}{p}]}{2\beta}}
% \|
%P(-\triangle)^{-1/2}\nabla\cdot f(s)\|_{\dot{B}^{w-2\beta+\frac{n}{p}}_{p,\infty}(\mathbb{R}^{n})}ds\\
&\lesssim&\sup_{\tau>0}
\tau^{\frac{2\beta-\left[w-(2\beta-1)+\frac{n}{p}\right]}{2\beta}}
\int_{0}^{t}(t+\tau-s)^{\frac{-4\beta+[w-(2\beta-1)+\frac{n}{p}]}{2\beta}}
 \| f(s)\|_{\dot{B}^{w-2\beta+\frac{n}{p}}_{p,\infty}(\mathbb{R}^{n})}ds\\
&\lesssim&\sup_{\tau>0}
\tau^{\frac{2\beta-\left[w-(2\beta-1)+\frac{n}{p}\right]}{2\beta}}
\left(\int_{0}^{t/2}+\int_{t/2}^{t}\right)
(t+\tau-s)^{\frac{-4\beta+[w-(2\beta-1)+\frac{n}{p}]}{2\beta}}s^{-\frac{w}{2\beta}}ds\\
&&\times\sup_{0<s<t}s^{\frac{w}{2\beta}}\| f(s)\|_{\dot{B}^{w-2\beta+\frac{n}{p}}_{p,\infty}(\mathbb{R}^{n})}\\
&\lesssim&\sup_{\tau>0}
\tau^{\frac{2\beta-\left[w-(2\beta-1)+\frac{n}{p}\right]}{2\beta}}
t^{-\frac{w}{2\beta}}\int_{0}^{t}(t+\tau-s)^{\frac{-4\beta+[w-(2\beta-1)+\frac{n}{p}]}{2\beta}}ds
\times\sup_{0<s<t}s^{-\frac{w}{2\beta}}\| f(s)\|_{\dot{B}^{w-2\beta+\frac{n}{p}}_{p,\infty}(\mathbb{R}^{n})}\\
&&+\sup_{\tau>0}
\tau^{\frac{2\beta-\left[w-(2\beta-1)+\frac{n}{p}\right]}{2\beta}}
(t+\tau)^{\frac{-4\beta+[w-(2\beta-1)+\frac{n}{p}]}{2\beta}}
\int_{0}^{t}s^{-\frac{w}{2\beta}}ds
\times\sup_{0<s<t}s^{\frac{w}{2\beta}}\| f(s)\|_{\dot{B}^{w-2\beta+\frac{n}{p}}_{p,\infty}(\mathbb{R}^{n})}\\
&\lesssim&\sup_{\tau>0}
\tau^{\frac{2\beta-\left[w-(2\beta-1)+\frac{n}{p}\right]}{2\beta}}
t^{-\frac{w}{2\beta}}\int_{0}^{t}(t+\tau-s)^{\frac{-4\beta+[w-(2\beta-1)+\frac{n}{p}]}{2\beta}}ds
\times\sup_{0<s<t}s^{-\frac{w}{2\beta}}\| f(s)\|_{\dot{B}^{w-2\beta+\frac{n}{p}}_{p,\infty}(\mathbb{R}^{n})}\\
&&+\sup_{\tau>0}
(t+\tau)^{\frac{2\beta-\left[w-(2\beta-1)+\frac{n}{p}\right]}{2\beta}+\frac{-4\beta+[w-(2\beta-1)+\frac{n}{p}]}{2\beta}}
\int_{0}^{t}s^{-\frac{w}{2\beta}}ds
\times\sup_{0<s<t}s^{\frac{w}{2\beta}}\| f(s)\|_{\dot{B}^{w-2\beta+\frac{n}{p}}_{p,\infty}(\mathbb{R}^{n})}\\
&\lesssim&\sup_{\tau>0}
\tau^{\frac{2\beta-\left[w-(2\beta-1)+\frac{n}{p}\right]}{2\beta}}
t^{-\frac{w}{2\beta}}\int_{0}^{t}(t+\tau-s)^{\frac{-4\beta+[w-(2\beta-1)+\frac{n}{p}]}{2\beta}}ds
\times\sup_{0<s<t}s^{-\frac{w}{2\beta}}\| f(s)\|_{\dot{B}^{w-2\beta+\frac{n}{p}}_{p,\infty}(\mathbb{R}^{n})}\\
&&+\sup_{\tau>0} (t+\tau)^{-1}t^{1-\frac{w}{2\beta}}
\times\sup_{0<s<t}s^{\frac{w}{2\beta}}\| f(s)\|_{\dot{B}^{w-2\beta+\frac{n}{p}}_{p,\infty}(\mathbb{R}^{n})}\\
% &\lesssim&
%t^{-\frac{w}{2\beta}}\sup_{\tau>0}\int_{0}^{t/\tau}(1+s)^{\frac{-4\beta+[w-(2\beta-1)+\frac{n}{p}]}{2\beta}}ds
%\times\sup_{0<s<t}s^{\frac{w}{2\beta}}\| f(s)\|_{\dot{B}^{w-2\beta+\frac{n}{p}}_{p,\infty}(\mathbb{R}^{n})}\\
%&&+t^{-\frac{w}{2\beta}}\times\sup_{0<s<t}s^{\frac{w}{2\beta}}\| f(s)\|_{\dot{B}^{w-2\beta+\frac{n}{p}}_{p,\infty}(\mathbb{R}^{n})}\\
&\lesssim&t^{-\frac{w}{2\beta}}\sup_{0<s<t}s^{\frac{w}{2\beta}}\|f(s)\|_{\dot{B}^{w-2\beta+\frac{n}{p}}_{p,\infty}(\mathbb{R}^{n})}.
\end{eqnarray*}
since $1+n/p+w<4\beta.$ Thus, by (iii) of Lemma \ref{le1} and Lemma
\ref{lemma1}, we have, for $2-2\beta<w<2\beta<2,$

\begin{eqnarray*}
&&\|u(t)-e^{-t(-\triangle)^{\beta}}a\|_{\dot{B}^{-(2\beta-1)+\frac{n}{p}}_{p,q}(\mathbb{R}^{n})}\\
&\lesssim&\int_{0}^{t}\|e^{-(t-s)(-\triangle)^{\beta}}\nabla\cdot
f(s)\|_{\dot{B}^{-(2\beta-1)+\frac{n}{p}}_{p,q}(\mathbb{R}^{n})}ds\\
%&\lesssim&\int_{0}^{t}\|e^{-(t-s)(-\triangle)^{\beta}}
%f(s)\|_{\dot{B}^{2-2\beta+\frac{n}{p}}_{p,q}(\mathbb{R}^{n})}ds\\
&\lesssim&\int_{0}^{t}(t-s)^{-\frac{2-w}{2\beta}}
\|f(s)\|_{\dot{B}^{w-2\beta+\frac{n}{p}}_{p,q}(\mathbb{R}^{n})}ds\\
&\lesssim&\int_{0}^{t}(t-s)^{-\frac{2-w}{2\beta}}s^{-\frac{w}{2\beta}-(1-\frac{1}{\beta})}
\sup_{0<s,t}s^{\frac{w}{2\beta}+(1-\frac{1}{\beta})}\|f(s)\|_{\dot{B}^{w-2\beta+\frac{n}{p}}_{n,\infty}(\mathbb{R}^{n})}\\
&\lesssim&\sup_{0<s,t}s^{\frac{w}{2\beta}+1-\frac{1}{\beta}}\|f(s)\|_{\dot{B}^{w-2\beta+\frac{n}{p}}_{n,\infty}(\mathbb{R}^{n})}.
\end{eqnarray*}
Combining the previous estimates together, we get
$$ t^{\frac{w}{2\beta}}\|u(t)\|_{\dot{B}^{w-(2\beta-1)+\frac{n}{p}}_{p,\infty}(\mathbb{R}^{n})}\lesssim
t^{\frac{w}{2\beta}}\|e^{-t(-\triangle)^{\beta}}a\|_{\dot{B}^{w-(2\beta-1)+\frac{n}{p}}_{p,\infty}(\mathbb{R}^{n})}
+\sup_{0<s<t}s^{\frac{w}{2\beta}}\|f(s)\|_{\dot{B}^{w-2\beta+\frac{n}{p}}_{p,\infty}(\mathbb{R}^{n})},$$
and
$$\|u(t)\|_{\dot{B}^{-(2\beta-1)+\frac{n}{p}}_{p,q}(\mathbb{R}^{n})}
\lesssim
\|e^{-t(-\triangle)^{\beta}}a\|_{\dot{B}^{-(2\beta-1)+\frac{n}{p}}_{p,q}(\mathbb{R}^{n})}
+\sup_{0<s<t}s^{\frac{w}{2\beta}+1-\frac{1}{\beta}}\|f(s)\|_{\dot{B}^{w-2\beta+\frac{n}{p}}_{p,\infty}(\mathbb{R}^{n})}.
$$
Thus we can get  our estimates by applying Lemma \ref{lemma1} and
inequality (\ref{est in priori esti}).

\section{Proof of Theorem \ref{biggest critical spaces}}
Define $$X=\left\{u:[0,\infty)\longrightarrow
G_{n}^{-(2\beta-1)}(\mathbb{R}^{n})|\nabla \cdot u=0,
\|u\|_{X}<\infty\right\}$$ with
$$\|u\|_{X}=\sup_{t>0}\left(\|u(t)\|_{G_{n}^{-(2\beta-1)}(\mathbb{R}^{n})}+
t^{\frac{w}{2\beta}}\|u(t)\|_{\dot{B}^{w-(2\beta-1)}_{\infty,\infty}(\mathbb{R}^{n})}\right).$$
Set
$$T(u)(t)=e^{-t(-\triangle)^{\beta}}a-\int_{0}^{t}e^{-(t-s)(-\triangle)^{\beta}}P\nabla \cdot (u(s)\otimes v(s))ds.$$
We want to show that $T$ is a contraction  mapping from a ball of
$X$ to itself. The case of $p=\infty$ in Lemma \ref{lemma in Besov
space} implies that
\begin{eqnarray*}
t^{\frac{w-2\beta}{2\beta}}\|v(t)\|_{\dot{B}^{w-(2\beta-1)}_{\infty,\infty}(\mathbb{R}^{n})}
+t^{\frac{2\beta-1}{2\beta}}\|v(t)
\|_{L^{\infty}(\mathbb{R}^{n})}\lesssim\|v(t)\|_{\dot{B}^{-(2\beta-1)}_{\infty,\infty}(\mathbb{R}^{n})}
+t^{\frac{w}{2\beta}}\|v(t)\|_{\dot{B}^{w-(2\beta-1)}_{\infty,\infty}(\mathbb{R}^{n})}.
\end{eqnarray*}
Then, according to Proposition \ref{lemma for priori esti} and Lemma
\ref{equiv norm in besov}, we have
\begin{eqnarray*}
&&t^{\frac{w}{2\beta}}\|(Tu)(t)\|_{\dot{B}^{w-(2\beta-1)}_{\infty,\infty}(\mathbb{R}^{n})}\\
&\lesssim&\|a\|_{\dot{B}^{-(2\beta-1)}_{\infty,\infty}(\mathbb{R}^{n})}
+\sup_{0<s<t}s^{\frac{w}{2\beta}}\|u(s)\otimes
u(s)\|_{\dot{B}^{w-2\beta}_{\infty,\infty}(\mathbb{R}^{n})}\\
&\lesssim&\|a\|_{\dot{B}^{-(2\beta-1)}_{\infty,\infty}(\mathbb{R}^{n})}
+\sup_{0<s<t}s^{\frac{2\beta-1}{2\beta}}\|u(s)\otimes
v(s)\|_{\dot{B}^{-(2\beta-1)}_{\infty,\infty}(\mathbb{R}^{n})}
+\sup_{0<s<t}s^{\frac{w+2\beta-1}{2\beta}}\|u(s)\otimes u(s)\|_{\dot{B}^{w-(2\beta-1)}_{\infty,\infty}(\mathbb{R}^{n})}\\
&\lesssim&\|a\|_{\dot{B}^{-(2\beta-1)}_{\infty,\infty}(\mathbb{R}^{n})}
+\sup_{0<s<t}s^{\frac{2\beta-1}{2\beta}}\|u(s)\otimes
v(s)\|_{G_{n}^{-(2\beta-1)}(\mathbb{R}^{n})}
+\sup_{0<s<t}s^{\frac{w+2\beta-1}{2\beta}}\|u(s)\otimes u(s)\|_{\dot{B}^{w-(2\beta-1)}_{\infty,\infty}(\mathbb{R}^{n})}\\
&\lesssim&\|a\|_{\dot{B}^{-(2\beta-1)}_{\infty,\infty}(\mathbb{R}^{n})}
+\sup_{0<s<t}s^{\frac{2\beta-1}{2\beta}}\|u(s)\|_{L^{\infty(\mathbb{R}^{n})}}\|
v(s)\|_{G_{n}^{-(2\beta-1)}(\mathbb{R}^{n})}\\
&&+\sup_{0<s<t}s^{\frac{w+2\beta-1}{2\beta}}\|u(s)\|_{L^{\infty}(\mathbb{R}^{n})}
\|u(s)\|_{\dot{B}^{w-(2\beta-1)}_{\infty,\infty}(\mathbb{R}^{n})}\\
&\lesssim&\|a\|_{G_{n}^{-(2\beta-1)}(\mathbb{R}^{n})}+\|u\|^{2}_{X}.
\end{eqnarray*}
On the other hand, Lemma \ref{product} implies that
\begin{eqnarray*}
\|(Tu)(t)\|_{G^{w-(2\beta-1)}_{n}(\mathbb{R}^{n})}
&\lesssim&\sup_{s>0}s^{\frac{2\beta-1}{2\beta}}\|e^{-t(-\triangle)^{\beta}}e^{-s(-\triangle)^{beta}}|a|\|_{L^{\infty}(\mathbb{R}^{n})}\\
&&+
\int_{0}^{t}\|e^{-(t-s)(-\triangle)^{\beta}}P\nabla\cdot(u(s)\otimes
u(s))\|_{G^{-(2\beta-1)}_{n}(\mathbb{R}^{n})}ds\\
&\lesssim&\|a\|_{G^{-(2\beta-1)}_{n}(\mathbb{R}^{n})}+
\int_{0}^{t}(t-s)^{\frac{2\beta-1}{2\beta}}\|u(s)\|_{G^{-(2\beta-1)}_{n}}\|u(s)\|_{L^{\infty}(\mathbb{R}^{n})}\\
&\lesssim&\|a\|_{G^{-(2\beta-1)}_{n}(\mathbb{R}^{n})}+\|u\|^{2}_{X}.
\end{eqnarray*}
Hence, we get
$$\|Tu(t)\|_{X}\lesssim \|a\|_{G_{n}^{-(2\beta-1)}(\mathbb{R}^{n})}+\|u\|^{2}_{X} $$
and
$$\|Tu-Tv\|_{X}\lesssim(\|u\|_{X}+\|v\|_{X})\|u-v\|_{X}.$$
Therefore, the contraction mapping principle implies there exists a
 unique  solution to  equations (\ref{eq1e}) if $\|a\|_{G_{n}^{-(2\beta-1)}(\mathbb{R}^{n})}$
is small enough.

\section{Proof of Theorem \ref{biggest critical spaces 2}}
The solution $\{u,b\}$ to equations (\ref{eq1ee}) can be written as
$$u(t,x)=e^{-t(-\triangle)^{\beta}}u_{0}(x)-B(u,u)+B(u,u):=F_{1}(u,b),$$
$$b(t,x)=e^{-(-\triangle)^{\beta}}b_{0}(x)-B(u,b)+B(b,u):=F_{2}(u,b),$$
with
$$B(u,v)=\int_{0}^{t}e^{-(t-s)(-\triangle)^{\beta}}P\nabla\cdot(u\otimes v)(s)ds.$$
Define
$$Y=\left\{(u,b):(0,\infty)\longrightarrow G_{n}^{-(2\beta-1)(\mathbb{R}^{n})}|
\nabla \cdot u=\nabla \cdot b=0,\|(u,b)\|_{Y}<\infty\right\}$$ with
$$\|(u,b)\|_{Y}=\sup_{t>0}\left(\|(u,b)(t)\|_{G_{n}^{-(2\beta-1)}(\mathbb{R}^{n})}
+t^{\frac{w-1}{2\beta}}
\|(u,b)(t)\|_{\dot{B}^{w-(2\beta-1)}_{\infty,\infty}(\mathbb{R}^{n})}\right)<\infty,$$
$$\|(u,b)\|_{Y}=\|u\|_{Y}+\|b\|_{Y}.$$
We  want to show that $F_{1}$ and $F_{2}$ are contraction mappings
from a ball of $Y$ to itself. We rewrite the solution $(u,b)$ as
$$\left(\begin{array}{ccc}u\\b\end{array}\right)=\left(\begin{array}{ccc}F_{1}(u,b)\\F_{2}(u,b)\end{array}\right):=F(u,b).
$$
Then we have \begin{eqnarray*}
&&t^{\frac{w}{2\beta}}\|F_{1}(u,b)(t)\|_{\dot{B}^{w-(2\beta-1)}_{\infty,\infty}(\mathbb{R}^{n})}\\
&\lesssim&
\|u_{0}\|_{\dot{B}^{-(2\beta-1)}_{\infty,\infty}(\mathbb{R}^{n})}
+\sup_{0<s<t}s^{\frac{w}{2\beta}}\|(u\otimes u,b\otimes
b)(s)\|_{\dot{B}^{w-2\beta}_{\infty,\infty}(\mathbb{R}^{n})}\\
&\lesssim&\|u_{0}\|_{\dot{B}^{-(2\beta-1)}_{\infty,\infty}(\mathbb{R}^{n})}
+\sup_{0<s<t}s^{\frac{2\beta-1}{2\beta}}
\|(u\otimes u, b\otimes b)(s)\|_{\dot{B}^{-(2\beta-1)}_{\infty,\infty}(\mathbb{R}^{n})}\\
&&+\sup_{0<s<t}s^{\frac{w+2\beta-1}{2\beta}}\|(u\otimes u, b\otimes
b)(s)\|_{\dot{B}^{w-(2\beta-1)}_{\infty,\infty}(\mathbb{R}^{n})}\\
&\lesssim&\|u_{0}\|_{\dot{B}^{-(2\beta-1)}_{\infty,\infty}(\mathbb{R}^{n})}
+\sup_{0<s<t}s^{\frac{2\beta-1}{2\beta}}
\|(u\otimes u, b\otimes b)(s)\|_{G^{-(2\beta-1)}_{n}(\mathbb{R}^{n})}\\
&&+\sup_{0<s<t}s^{\frac{w+2\beta-1}{2\beta}}\|(u, b)(s)\|_{\dot{B}^{w-(2\beta-1)}_{\infty,\infty}(\mathbb{R}^{n})}\|(u,b)(s)\|_{L^{\infty}(\mathbb{R}^{n})}\\
&\lesssim&\|u_{0}\|_{G^{-(2\beta-1)}_{n}(\mathbb{R}^{n})}
+\sup_{0<s<t}s^{\frac{2\beta-1}{2\beta}}
\|(u,b)(s)\|_{L^{\infty}}\|(u, b)(s)\|_{G^{-(2\beta-1)}_{n}(\mathbb{R}^{n})}\\
&&+\sup_{0<s<t}s^{\frac{w+2\beta-1}{2\beta}}\|(u, b)(s)\|_{\dot{B}^{w-(2\beta-1)}_{\infty,\infty}(\mathbb{R}^{n})}\|(u,b)(s)\|_{L^{\infty}(\mathbb{R}^{n})}\\
&\lesssim&\|u_{0}\|_{G^{-(2\beta-1)}_{n}(\mathbb{R}^{n})}\\
&&+\sup_{0<s<t}\left[s^{\frac{2\beta-1}{2\beta}}
\|(u,b)(s)\|_{L^{\infty}}\left(\|(u,
b)(s)\|_{G^{-(2\beta-1)}_{n}(\mathbb{R}^{n})}
+s^{\frac{w}{2\beta}}\|(u, b)(s)\|_{\dot{B}^{w-(2\beta-1)}_{\infty,\infty}(\mathbb{R}^{n})}\right)\right]\\
&\lesssim&\|u_{0}\|_{G^{-(2\beta-1)}_{n}(\mathbb{R}^{n})}+\|(u,b)\|_{Y}^{2}.
\end{eqnarray*}
Similarly, we get
\begin{eqnarray*}
&&\|F_{1}(u,b)(t)\|_{G^{-(2\beta-1)}_{n}(\mathbb{R}^{n})}\\&\lesssim&
\sup_{s>0}s^{\frac{2\beta-1}{2\beta}}
\|e^{-t(-\triangle)^{\beta}}e^{-s(-\triangle)^{\beta}}|u_{0}|(s)\|_{L^{\infty}(\mathbb{R}^{n})}\\
&&+\int_{0}^{t}\|e^{-(t-s)(-\triangle)^{\beta}}P\nabla\cdot(u\otimes
u)(s)\|_{G^{-(2\beta-1)}_{n}(\mathbb{R}^{n})}ds\\
&&+\int_{0}^{t}\|e^{-(t-s)(-\triangle)^{\beta}}P\nabla\cdot(b\otimes
b)(s)\|_{G^{-(2\beta-1)}_{n}(\mathbb{R}^{n})}ds\\
&\lesssim&\|u_{0}\|_{G_{n}^{-(2\beta-1)}(\mathbb{R}^{n})}
+\int_{0}^{t}(t-s)^{\frac{2\beta-1}{2\beta}}\|(u,b)\|_{G_{n}^{-(2\beta-1)}(\mathbb{R}^{n})}
\|(u,b)(s)\|_{L^{\infty}(\mathbb{R}^{n})}ds\\
&\lesssim&\|u_{0}\|_{G_{n}^{-(2\beta-1)}(\mathbb{R}^{n})}+\|(u,b)\|_{Y}^{2}.
\end{eqnarray*}
Thus, we have
$$\|F_{1}(u,b)\|_{Y}\lesssim\|u_{0}\|_{G_{n}^{-(2\beta-1)}(\mathbb{R}^{n})}+\|(u,b)\|_{Y}^{2}$$
and
$$\|F_{1}(u,b)(t)-F_{1}(u',b')\|_{Y}\lesssim\|(u-u',b-b')\|_{Y}(\|(u,b)\|_{Y}+\|(u',b')\|_{Y}).$$
Similarly, we can prove that
$$\|F_{2}(u,b)\|_{Y}\lesssim\|u_{0}\|_{G_{n}^{-(2\beta-1)}(\mathbb{R}^{n})}+\|u\|_{Y}\|b\|_{Y}$$
and
$$\|F_{2}(u,b)(t)-F_{2}(u',b')\|_{Y}
\lesssim\|(u-u',b-b')\|_{Y}(\|(u,b)\|_{Y}+\|(u',b')\|_{Y}).$$
 These estimates imply that
$$\|F(u,b)-F(u',b')\|_{Y}\lesssim\|(u-u',b-b')\|_{Y}(\|(u,b)\|_{Y}+\|(u',b')\|_{Y}).$$
Therefore, the contraction mapping principle finishes the proof.
\section{Proof of Proposition \ref{proposition 1}}
Define
$$K=\left\{f\in L^{\infty}\left((0,\infty);\dot{B}^{-(2\beta-1)+\frac{n}{p}}_{p,\infty}(\mathbb{R}^{n})\right)
: \nabla \cdot f=0, \|f\|_{K}<\infty\right\}$$
 with
 $$\|f\|_{K}=\sup_{t>0}\left(\|u(t)\|_{\dot{B}^{-(2\beta-1)+\frac{n}{p}}_{p,\infty}(\mathbb{R}^{n})}
+t^{\frac{w}{2\beta}}\|u(t)\|_{\dot{B}^{w-(2\beta-1)+\frac{n}{p}}_{p,\infty}(\mathbb{R}^{n})}\right).$$
Let
$$Tu(t)=e^{-t(-\triangle)^{\beta}}a-\int_{0}^{t}e^{-(t-s)(-\triangle)^{\beta}}P\nabla\cdot(u(s)\otimes u(s))ds.$$ We
want to prove that $T$ is a contraction mapping from a ball of $K$
to itself. It follows from Lemmas \ref{equiv norm in besov} and
\ref{lemma in Besov space} that
\begin{eqnarray*}
&&\|Tu(t)\|_{\dot{B}^{-(2\beta-1)+\frac{n}{p}}_{p,\infty}(\mathbb{R}^{n})}
+t^{\frac{w}{2\beta}}\|Tu(t)\|_{\dot{B}^{w-(2\beta-1)+\frac{n}{p}}_{p,\infty}(\mathbb{R}^{n})}\\
&\lesssim&\|a\|_{\dot{B}^{-(2\beta-1)+\frac{n}{p}}_{p,\infty}(\mathbb{R}^{n})}
+t^{\frac{w}{2\beta}}\int_{0}^{t}\|e^{-(t-s)(-\triangle)^{\beta}}P\nabla\cdot(u(s)\otimes
u(s))\|_{\dot{B}^{w-(2\beta-1)+\frac{n}{p}}_{p,\infty}(\mathbb{R}^{n})}ds\\
&\lesssim&\|a\|_{\dot{B}^{-(2\beta-1)+\frac{n}{p}}_{p,\infty}(\mathbb{R}^{n})}
+\sup_{0<s<t}s^{\frac{w}{2\beta}}\|(u(s)\otimes
u(s))\|_{\dot{B}^{w-2\beta+\frac{n}{p}}_{p,\infty}(\mathbb{R}^{n})}\\
&\lesssim&\|a\|_{\dot{B}^{-(2\beta-1)+\frac{n}{p}}_{p,\infty}(\mathbb{R}^{n})}
+\sup_{0<s<t}s^{\frac{w}{2\beta}}\|u(s)\|_{L^{\infty}(\mathbb{R}^{n})}\|
u(s)\|_{\dot{B}^{w-2\beta+\frac{n}{p}}_{p,\infty}(\mathbb{R}^{n})}\\
&\lesssim&\|a\|_{\dot{B}^{-(2\beta-1)+\frac{n}{p}}_{p,\infty}(\mathbb{R}^{n})}
+\|u\|^{2}_{K}
\end{eqnarray*}
since $0<w-2\beta+\frac{n}{p}<1,$ $n\leq p<\infty.$ Similarly, we
get
$$\|Tu-Tv\|_{K}\lesssim(\|u\|_{K}+\|v\|_{K})\|u-v\|_{K}, \ \ \hbox{for}\ \ u,v\in
K.$$ Thus, these estimates imply that $T$ is a contraction mapping
for
$\|a\|_{\dot{B}^{-(2\beta-1)+\frac{n}{p}}_{p,\infty}}(\mathbb{R}^{n})$
small enough. Therefore, we can finish the proof by the contraction
mapping principle.

\section{Proof of Proposition \ref{th 3}}
\begin{proof}
We can write
$$e^{-t(-\triangle)^\beta}f=e^{-(t-u)(-\triangle)^\beta}e^{-u(-\triangle)^\beta}f$$
and%
$$e^{-t(-\triangle)^\beta}f(x)=\frac{2}{t}\int_{0}^{t/2}e^{-(t-u)(-\triangle)^{\beta}
}e^{-u(-\triangle)^\beta}fds.$$ According to the definition of
$e^{-t(-\triangle)^\beta},$ it is a convolution operator with a
positive Kernel
$K_{t}^\beta(x)=(2\pi)^{-n/2}\int_{\mathbb{R}^{n}}e^{ix\xi-t|\xi|^{2\beta}}d\xi$
satisfying%
$K_{t}^\beta(x)=\frac{1}{t^{n/2\beta}}K_{1}^\beta(\frac{x}{t^{1/2\beta}}).$
 Then, using H$\ddot{o}$lder's inequality, we obtain that
\begin{eqnarray*}
 &&|e^{-t(-\triangle)^\beta}f(x_{0})|=\left|\frac{2}{t}\int_{0}^{t/2}\int_{\mathbb{R}^{n}}K_{t-u}^\beta(x-x_{0})e^{-u(-\triangle)^\beta}f(x_{0})dxds\right|\\
&\lesssim&\frac{2}{t}\int_{0}^{t/2}\left(\int_{\mathbb{R}^{n}}K_{t-u}^\beta(x-x_{0})|e^{-u(-\triangle)^\beta}f(x_{0})|^{2}dx\right)^{1/2}
\left(\int_{\mathbb{R}^{n}}|K_{t-u}^\beta(x-x_{0})|dx\right)^{1/2}ds\\
&\lesssim&\frac{2}{t}\int_{0}^{t/2}\left(\int_{\mathbb{R}^{n}}K_{t-u}^\beta(x-x_{0})|e^{-u(-\triangle)^\beta}f(x_{0})|^{2}dx\right)^{1/2}ds\\
&\lesssim&
\left(\frac{2}{t^{\frac{\zeta-1}{\beta}}}\int_{0}^{t/2}\int_{\mathbb{R}^{n}}K_{t-u}^\beta(x-x_{0})u^{\frac{\zeta-1}{\beta}}|e^{-u(-\triangle)^\beta}f(x_{0})|^{2}dxds\right)^{1/2}.\\
\end{eqnarray*}
By Miao, Yuan and Zhang's \cite[Lemma 2.1]{C. Miao B. Yuan  B.
Zhang} , we have
$$K_{t-u}^\beta(x-x_{0})\lesssim\frac{1}{(t-u)^{n/2\beta}}\frac{1}{\left(1+\frac{|x-x_{0}|}{(t-u)^{1/2\beta}}\right)^{n+2\beta}}.$$%
Thus
\begin{eqnarray*}
I&=&\int_{\mathbb{R}^{n}}K_{t-u}^\beta(x-x_{0})u^{\frac{\zeta-1}{\beta}}|e^{-u(-\triangle)^\beta}f(x)|^{2}dx\\
&\lesssim&\int_{\mathbb{R}^{n}}\frac{1}{(t-u)^{n/2\beta}}\frac{1}{\left(1+\frac{|x-x_{0}|}{(t-u)^{1/2\beta}}\right)^{n+2\beta}}
u^{\frac{\zeta-1}{\beta}}|e^{-u(-\triangle)^\beta}f(x)|^{2}dx\\
&\lesssim&\int_{\frac{x-x_{0}}{(t-u)^{\frac{1}{2\beta}}}\in k+[0,1]^{n}}\frac{1}{(t-u)^{n/2\beta}}
\frac{1}{\left(1+\frac{|x-x_{0}|}{(t-u)^{1/2\beta}}\right)^{n+2\beta}}u^{\frac{\zeta-1}{\beta}}|e^{-u(-\triangle)^\beta}f(x)|^{2}dx.\\
\end{eqnarray*}
Since $0<u<\frac{t}{2}$ and $\frac{t}{2}<t-u<t,$ we can get
\begin{eqnarray*}
I\lesssim \frac{1}{t^{n/2\beta}}\int_{0}^{t/2}\int_{|x-x_{0}|\leq
t^{\frac{1}{2\beta}}}u^{\frac{\zeta-1}{\beta}}|e^{-u(-\triangle)^\beta}f(x)|^{2}dx.\end{eqnarray*}
 This gives
 \begin{eqnarray*}
 \|e^{-t^{2\beta}(-\triangle)^\beta}f\|_{L^{\infty}}&\lesssim&
  \left(\frac{2}{t^{\frac{\zeta-1}{\beta}}}\frac{1}{t^{n}}
 \int_{0}^{t^{2\beta}}\int_{|x-x_{0}|\leq
 t}s^{\frac{\zeta-1}{\beta}}|e^{-s(-\triangle)^\beta}f(x)|^{2}dxds\right)^{1/2}\\
 &\lesssim& \left(\frac{2}{t^{\frac{\zeta-1}{\beta}}}t^{-n}
 \int_{0}^{t^{2\beta}}\int_{|x-x_{0}|\leq
 t}s^{\frac{\zeta-1}{\beta}}|e^{-s(-\triangle)^\beta}f(x)|^{2}dxds\right)^{1/2},
 \end{eqnarray*}
 that is,
 $$t^{\zeta}\|e^{-t^{2\beta}(-\triangle)^\beta}f\|_{L^{\infty}}\lesssim\left(t^{-n}
 \int_{0}^{t^{2\beta}}\int_{|x-x_{0}|\leq
 t}s^{\frac{\zeta-1}{\beta}}|e^{-s(-\triangle)^\beta}f(x)|^{2}dxds\right)^{1/2}.$$
It follows from  Miao, Yuan and Zhang's \cite[Prorposition 2.1]{C.
Miao B. Yuan B. Zhang} that, for $s<0,$ $f\in
\dot{B}_{\infty,\infty}^{s}(\mathbb{R}^{n})$ if and only if
 $$\sup_{r>0}r^{-s/2\beta}\|e^{-r(-\triangle)^{\beta}}f\|_{L^{\infty}(\mathbb{R}^{n})}<\infty.$$
Thus, the previous estimate implies that
$BMO^{-\zeta}(\mathbb{R}^{n})
 \hookrightarrow \dot{B}^{-\zeta}_{\infty,\infty}(\mathbb{R}^{n}).$
\end{proof}

\section{Proof of Proposition \ref{fractioanl derivative of BMO}}
We need the following lemma which can be proved easily.
\begin{lemma}
For $\zeta\geq 0,$ $(-\triangle)^{\zeta/2}e^{-(-\triangle)^{\beta}}$
is a convolution operator with kernel $K^{\zeta,\beta}(x)\in
L^{1}(\mathbb{R}^{n}).$
\end{lemma}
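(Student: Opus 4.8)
The plan is to realize the operator explicitly on the Fourier side and then estimate the resulting kernel. Since $\widehat{(-\triangle)^{\zeta/2}f}(\xi)=|\xi|^{\zeta}\widehat{f}(\xi)$ and $\widehat{e^{-(-\triangle)^{\beta}}f}(\xi)=e^{-|\xi|^{2\beta}}\widehat{f}(\xi)$, the operator $(-\triangle)^{\zeta/2}e^{-(-\triangle)^{\beta}}$ is convolution with the function whose Fourier transform is the symbol
\[
m(\xi)=|\xi|^{\zeta}e^{-|\xi|^{2\beta}},
\]
so that $K^{\zeta,\beta}=c_{n}\,m^{\vee}$ for a dimensional constant $c_{n}$. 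Because $\zeta\geq 0$, the singularity $|\xi|^{\zeta}$ is locally integrable while the factor $e^{-|\xi|^{2\beta}}$ forces rapid decay, hence $m\in L^{1}(\mathbb{R}^{n})$ and $K^{\zeta,\beta}$ is bounded and continuous; in particular it is integrable over every bounded set. The whole difficulty is therefore to prove integrability of $K^{\zeta,\beta}$ near infinity, for which I would establish the pointwise decay $|K^{\zeta,\beta}(x)|\lesssim |x|^{-(n+\zeta)}$ for $|x|\geq 1$. When $\zeta=0$ this is exactly the bound $K_{1}^{\beta}(x)\lesssim (1+|x|)^{-(n+2\beta)}$ of Miao, Yuan and Zhang already used in the previous section, so I may assume $\zeta>0$, in which case $n+\zeta>n$ and the decay is integrable at infinity.

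To separate the two distinct features of $m$ -- its singularity at the origin and its rapid decay at infinity -- I would fix a smooth cut-off $\chi$ with $\chi\equiv 1$ near the origin and $\mathrm{supp}\,\chi\subset\{|\xi|\leq 1\}$, and split $m=(1-\chi)m+\chi m=:m_{\infty}+m_{0}$. The high-frequency part $m_{\infty}$ is smooth, and differentiating $|\xi|^{\zeta}$ can only produce polynomial growth which is annihilated by $e^{-|\xi|^{2\beta}}$; thus $m_{\infty}\in\mathcal{S}(\mathbb{R}^{n})$, its inverse Fourier transform is Schwartz, and it contributes a kernel decaying faster than any power of $|x|$, hence lying in $L^{1}(\mathbb{R}^{n})$.

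The main step is the low-frequency part $m_{0}$, which is compactly supported but carries the genuine $|\xi|^{\zeta}$ singularity. Here I would use a dyadic partition of unity $1=\sum_{j\in\mathbb{Z}}\phi_{j}(\xi)$ for $\xi\neq 0$, with $\phi_{j}$ supported in $\{|\xi|\sim 2^{j}\}$ as in Section 2, and write $m_{0}=\sum_{j\leq 0}m_{0}\phi_{j}$ (only $j\lesssim 0$ survive because of $\chi$). On the annulus $\{|\xi|\sim 2^{j}\}$ the symbol has size $|\xi|^{\zeta}\sim 2^{j\zeta}$, and since every derivative costs a factor $2^{-j}$ on that scale, one has $|\partial^{\alpha}(m_{0}\phi_{j})|\lesssim 2^{j(\zeta-|\alpha|)}$ uniformly in $j\leq 0$. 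Estimating the oscillatory integral $(m_{0}\phi_{j})^{\vee}(x)$ trivially, and alternatively by integrating by parts $N$ times (each step gaining $|x|^{-1}$ and one derivative), yields
\[
|(m_{0}\phi_{j})^{\vee}(x)|\lesssim 2^{j(n+\zeta)}\min\bigl(1,(2^{j}|x|)^{-N}\bigr).
\]
Summing this geometric series over $j\leq 0$ -- using the trivial bound where $2^{j}\leq |x|^{-1}$ and the bound with $N>n+\zeta$ where $2^{j}>|x|^{-1}$, each regime being dominated by the endpoint term $2^{j}\sim|x|^{-1}$ -- gives $|(m_{0})^{\vee}(x)|\lesssim |x|^{-(n+\zeta)}$ for $|x|\geq 1$.

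Combining the two pieces, $|K^{\zeta,\beta}(x)|\lesssim |x|^{-(n+\zeta)}$ for large $|x|$, which together with the boundedness near the origin yields $K^{\zeta,\beta}\in L^{1}(\mathbb{R}^{n})$. The main obstacle is precisely the low-frequency estimate: one must convert the non-smooth $|\xi|^{\zeta}$ singularity into the sharp spatial decay $|x|^{-(n+\zeta)}$, and the dyadic decomposition is what makes the interplay between the size $2^{j\zeta}$ and the smoothness scale $2^{j}$ transparent while avoiding the boundary terms that a single integration by parts across the origin would generate.
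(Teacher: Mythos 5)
Your proof is correct, but there is nothing in the paper to compare it with: the paper states this lemma with only the remark that it ``can be proved easily'' and supplies no argument whatsoever. Your Fourier-side treatment is sound in all its steps: the symbol is $m(\xi)=|\xi|^{\zeta}e^{-|\xi|^{2\beta}}$; the high-frequency piece $m_{\infty}$ is Schwartz because every derivative of $|\xi|^{\zeta}$ grows at most polynomially while every derivative of $e^{-|\xi|^{2\beta}}$ decays rapidly (here $2\beta>1$ is not even needed, only $\beta>0$); the low-frequency piece carries the genuine obstruction, namely the non-smoothness of $|\xi|^{\zeta}$ at the origin, and your dyadic bounds $|\partial^{\alpha}(m_{0}\phi_{j})|\lesssim 2^{j(\zeta-|\alpha|)}$ together with the $\min\bigl(1,(2^{j}|x|)^{-N}\bigr)$ estimate and the two-regime summation over $j\le 0$ correctly produce the decay $|x|^{-(n+\zeta)}$. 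You were also right to treat $\zeta=0$ separately, since $|x|^{-n}$ is not integrable at infinity and one must instead invoke the bound $K_{1}^{\beta}(x)\lesssim(1+|x|)^{-(n+2\beta)}$ of Miao, Yuan and Zhang. In fact your argument proves strictly more than the lemma asserts: it yields the pointwise estimate $|K^{\zeta,\beta}(x)|\lesssim(1+|x|)^{-(n+\zeta)}$, which is exactly the bound $|K_{\zeta}(x)|\lesssim(1+|x|)^{-(n+\zeta)}$ that the paper invokes, again without proof, in the first half of Section 8 when verifying the Carleson-measure hypotheses of Lemma \ref{carleson measure BMO}. So your write-up closes two gaps at once; the only shorter alternative would have been to cite the kernel estimates for $(-\triangle)^{\zeta/2}e^{-t(-\triangle)^{\beta}}$ already available in the Miao--Yuan--Zhang reference that the paper uses elsewhere.
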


We divide the proof into two parts. First,  we  prove that $f\in
BMO^{-\zeta}(\mathbb{R}^{n})$ under the  assumption  of  the
existence of a distribution $g\in BMO(\mathbb{R}^{n})$ with
$f=(-\triangle)^{\zeta/2}g.$ From this assumption, we have, for all
$s>0,$
$$s^{\zeta/\beta}|e^{-s(-\triangle)^{\beta}}(-\triangle)^{\frac{\zeta}{2}}g|^{2}
=|K_{\zeta,s^{\frac{1}{2\beta}}}\ast g|^{2}$$ with
$$K_{\zeta,s^{\frac{1}{2\beta}}}(x)=s^{-\frac{n}{2\beta}}K_{\zeta}(\frac{x}
{s^{\frac{1}{2\beta}}}).$$ Here $K_{\zeta}\in L^{1}(\mathbb{R}^{n})$
and
$$\widehat{K_{\zeta,s^{\frac{1}{2\beta}}}}(\xi)
=\widehat{K_{\zeta}}(s^{\frac{1}{2\beta}}\xi)
=s^{\frac{\zeta}{2\beta}}|\xi|^{\zeta}e^{-s|\xi|^{2\beta}}.$$
 Thus
 $$\int_{\mathbb{R}^{n}}K_{\zeta}(x)ds=0\
 \hbox{and}\ |K_{\zeta}(x)|\lesssim\frac{1}{(1+|x|)^{n+\zeta}}.$$
For more about the kernel of $e^{-t(-\triangle)^{\beta}},$ see Miao,
Yuan and Zhang \cite{C. Miao B. Yuan  B. Zhang}. Then we have
\begin{eqnarray*}
\sup_{\xi\in\mathbb{R}^{n}}\int_{0}^{\infty}|\widehat{K_{\zeta}}(t\xi)|^{2}
\frac{dt}{t}
&=&\sup_{|\xi|=1}\int_{0}^{\infty}|\widehat{K_{\zeta}}(t\xi)|^{2}\frac{dt}{t}\\
&=&\sup_{|\xi|=1}\int_{0}^{\infty}(t^{\zeta}e^{-t^{2\beta}})^{2}\frac{dt}{t}\\
&=&\int_{0}^{\infty}t^{2\zeta-1}e^{-2t^{2\beta}}dt\\
&=&\frac{2^{-\frac{\zeta}{\beta}}-1}{\beta}\int_{0}^{\infty}t^{\frac{\zeta}{\beta}-1}e^{-t}dt\\
&=&\frac{2^{-\frac{\zeta}{\beta}}-1}{\beta}\Gamma(\frac{\zeta}{\beta})<\infty,
\end{eqnarray*}
since $\frac{\zeta}{\beta}>0.$ So
$d\mu(x,s)=|K_{\zeta,s^{\frac{1}{2\beta}}}\ast g|^{2}\frac{dtdx}{s}$
is a Carleson measure and
$$\int\int_{0<s<t,|x-x_{0}|<t^{\frac{1}{2\beta}}}
|(K_{\zeta,s^{\frac{1}{2\beta}}}\ast g)(x)|^{2}\frac{dsdx}{s}\leq
C\|g\|^{2}_{BMO(\mathbb{R}^{n})}t^{\frac{n}{2\beta}}.$$
 That is
 $\|f\|_{BMO^{-\zeta}(\mathbb{R}^{n})}\leq C\|g\|_{BMO(\mathbb{R}^{n})}.$

Second, we prove the existence of $g\in BMO(\mathbb{R}^{n})$ with
$f=(-\triangle)^{\frac{\zeta}{2}}g$ when $f\in
BMO^{-\zeta}(\mathbb{R}^{n}).$ Proposition \ref{th 1} implies that
we can get
$$g=\sum_{j<0}g_{j}-g_{j}(0)+\sum_{j>0}g_{j}$$
with $g_{j}=\triangle_{j}g$ such that $f=(-\triangle)^{\zeta/2}g$
and  $g\in \dot{B}^{0}_{\infty,\infty}(\mathbb{R}^{n}).$
 In fact,
$$\widehat{g}(\xi)=\sum_{j<0}\widehat{g_{j}}(\xi)
-\widehat{g_{0}}(\xi)+\sum_{j>0}\widehat{g_{j}}(\xi),$$ and
\begin{eqnarray*}
|\xi|^{\zeta}\widehat{g}(\xi)&=&
\sum_{j<0}|\xi|^{\zeta}\widehat{g_{j}}(\xi)
-|\xi|^{\zeta}\widehat{g_{0}}(\xi)+\sum_{j>0}|\xi|^{\zeta}\widehat{g_{j}}(\xi)\\
&=&|\xi|^{\zeta}\sum_{j\in\mathbb{Z}}\widehat{g_{j}}(\xi)
=\sum_{j\in\mathbb{Z}}\widehat{\triangle_{j}}(f)(\xi)=\widehat{f}(\xi),
\end{eqnarray*}
according to the homogeneous Littlewood-Paley decomposition of $f.$
On the other hand, to see  $g\in
\dot{B}^{0}_{\infty,\infty}(\mathbb{R}^{n}),$ we have,
$$g_{j}=\triangle_{j}g=\triangle_{j}(-\triangle)^{\zeta/2}f,$$
and
\begin{eqnarray*}
\widehat{g_{j}}(\xi)&=&|\xi|^{-\zeta}\phi(2^{-j}\xi)\widehat{f}(\xi)\\
&=&2^{-j\zeta}|2^{-j}\xi|^{-\zeta}\phi(2^{-j}\xi)\widehat{f}(\xi)\\
&=&2^{-jr}h_{j}(\xi)|2^{-j}\xi|^{-\zeta}\phi(2^{-j}\xi)\widehat{f}(\xi).
\end{eqnarray*}
Here $h_{j}\in C_{0}^{\infty}(\mathbb{R}^{n})$ satisfying $h_{j}=1$
on $C_{j}$
 and $\hbox{supp}(h_{j})\subset 2C_{j}.$
Let $$g_{j}=2^{-j\zeta}\triangle_{j}f\ast
(h_{j}|2^{-j}\xi|^{-\zeta})^{\vee},$$
 where $(h_{j}|2^{-j}\xi|^{-\zeta})^{\vee}\in L^{\infty}(\mathbb{R}^{n}).$
It follows from $h_{j}|2^{-j}\xi|^{-\zeta}\in
l^{\infty}(\mathbb{Z})$ that
$\|\triangle_{j}g\|_{L^{\infty}(\mathbb{R}^{n})}\in
l^{\infty}(\mathbb{Z}).$

 We need to prove that $g\in
BMO(\mathbb{R}^{n}).$ In fact, let $\eta$ by
$$\widehat{\eta}(s^{\frac{1}{2\beta}}\xi)=|s^{\frac{1}{2\beta}}\xi|^{\zeta}e^{-s|\xi|^{2\beta}}.$$
So $$\widehat{\eta_{2^{\frac{1}{2\beta}}}\ast
g}(\xi)=\widehat{\eta}(s^{\frac{1}{2\beta}}\xi)\widehat{g}(\xi)
=|s^{\frac{1}{2\beta}}\xi|^{\zeta}e^{-s|\xi|^{\beta}}\widehat{g}(\xi)$$
and
$$\widehat{g}(\xi)=\sum_{j\in\mathbb{Z}}|\xi|^{-\zeta}\widehat{\triangle_{j}}(f)(\xi)
=|\xi|^{-\zeta}\widehat{f}(\xi).$$ This tells
 us  $$\widehat{\eta_{s^{\frac{1}{2\beta}}}\ast g}(\xi)
=s^{\frac{\zeta}{2\beta}}e^{-s|\xi|^{2\beta}}\widehat{f}(\xi).$$ So
$$\eta_{s^{\frac{1}{2\beta}}}\ast g(x)
=s^{\frac{\zeta}{2\beta}}e^{-s(-\triangle)^{\beta}}f(x).$$ It
follows from  $f\in BMO^{-\zeta}(\mathbb{R}^{n})$ and $\eta$
satisfying the assumptions of Lemma \ref{carleson measure BMO} that
\begin{eqnarray*}
&&|B(x_{0},t^{\frac{1}{2\beta}})|^{-1}
\int_{0}^{t}\int_{|x-x_{0}|<t^{\frac{1}{2\beta}}}
|\eta_{s^{\frac{1}{2\beta}}}\ast g|^{2}\frac{dsdx}{s}\\
&=&t^{\frac{n}{2\beta}}\int_{0<s<t}\int_{|x-x_{0}|<t^{\frac{1}{2\beta}}}
|s^{\frac{\zeta}{2\beta}}e^{-s(-\triangle)^{\beta}}f(x)|^{2}\frac{dsdx}{s}\\
&\lesssim& \sup_{t>0}\sup_{x_{0}\in\mathbb{R}^{n}}
\left(t^{-n}\int_{0}^{t^{2\beta}}\int_{|x-x_{0}|<t}s^{\frac{\zeta-1}{\beta}}|e^{-s(-\triangle)^{\beta}}f(x)|^{2}dsdx
\right)\\
&=&\|f\|^{2}_{BMO^{-\zeta}(\mathbb{R}^{n})}.
\end{eqnarray*}
The previous estimate and Lemma \ref{carleson measure BMO} imply
that  $g\in BMO(\mathbb{R}^{n}).$ This finishes the proof.

 \vspace{0.1in}
\noindent {\bf{Acknowledgements.}}
 The author would like  to thank  Professor  Jie Xiao for  all  helpful discussions and  kind encouragement.
At the same time, the author are  grateful to Professor Yong Zhou
for sending  article \cite{Zhou Gala 1}.

\end{document}